\def\qed{\hfill$\Box$}  
\def\R{\mathbb{R}}  
\def\N{\mathbb{N}}
\def\R{\mathbb{R}}  
\def\N{\mathbb{N}}  
\def\Z{\mathbb{Z}}
\def\loc{\mathrm {loc}}
\def\R{\mathbb{R}}
\def\N{\mathbb{N}}
\def\calH{\mathcal{H}}
\providecommand{\supp}{\operatorname{supp}}
\providecommand{\dr}{\, \mathrm{d} r}
\providecommand{\dt}{\, \mathrm{d} t}
\providecommand{\ds}{\, \mathrm{d} s}
\providecommand{\dx}{\, \mathrm{d} x}
\providecommand{\dy}{\, \mathrm{d} x_2}
\providecommand{\dz}{\, \mathrm{d} z}
\providecommand{\dxi}{\, \mathrm{d} x_\xi}
\providecommand{\deta}{\, \mathrm{d} x_\eta}
\providecommand{\com}{\color{black}}
\providecommand{\cm}{\color{black}}
\providecommand{\BZ}{\color{black}}
\providecommand{\SC}{\color{black}}
\definecolor {red} {rgb} {0.3, 0.0, 0.0}
\newcommand{\dist}{\operatorname{dist}}
\newcommand{\curl}{\operatorname{curl}}
\def\eps{\varepsilon}  
\newtheorem{theorem}{Theorem}[section]  
\newtheorem{lemma}[theorem]{Lemma}
\newtheorem{corollary}[theorem]{Corollary}  
\newtheorem{proposition}[theorem]{Proposition}
\newtheorem{remark}[theorem]{Remark}
\newenvironment{proof}{{\noindent\bf 
Proof:}}{\hfill\qed \bigskip \newline } 
\numberwithin{equation}{section}
\title{Low volume-fraction microstructures\\
in martensites and crystal plasticity}
\author{Sergio Conti and Barbara Zwicknagl\\
\emph{\small Institut für Angewandte Mathematik, Universität  Bonn,
53115 Bonn}}
\date{July 16, 2015}
\begin{document}
\maketitle
\begin{abstract}
We study microstructure formation in two nonconvex singularly-perturbed variational problems
from materials science, one modeling austenite-martensite interfaces
in shape-memory alloys, the
other one slip structures in the plastic deformation of crystals.
For both functionals we determine the scaling of the
optimal energy in terms of the parameters of the problem, leading to a characterization of the 
mesoscopic phase diagram. Our results identify the presence of a new phase,
which is intermediate between the classical laminar microstructures and branching
patterns. The new phase, characterized by partial branching, appears for both problems 
in the limit of small volume fraction,
that is, if one of the variants (or of the slip systems) dominates the picture and the volume
fraction of the other one is small. 
\end{abstract}

\section{Introduction}
The study of spontaneous pattern formation in materials constitutes an important application of the calculus
of variations to materials science. From a variational viewpoint, the origin of microstructure is related
to a nonconvexity of the energy density, and to boundary conditions which favour states which correspond to 
a mixture of  different
minima of the energy density. 
In continuum mechanics typically  the independent variable is the gradient of a vector field, which 
obeys the zero-curl differential condition, leading to strong constraints on the admissible microstructures.
The theory of relaxation studies the effective behavior of nonconvex variational problems
which lack lower semicontinuity and possibly existence of  minimizers, but does not give
detailed information on the type of microstructure expected \cite{ball-james:87,MuellerLectureNotes,Dacorognabuch}.

A finer analysis can be done if a regularization is included, in the form of a convex higher-order term, 
which physically may represent interfacial energies. An exact determination
of the minimizers and the minimal energy is, for these more complex problems, typically impossible.
Already a study of the optimal scaling of the energy with respect to the parameters of the problem
may, however, give very valuable information. Starting with the works of Landau
\cite{landau:38,landau:43} on micromagnetism, branching-type patterns have been predicted and observed.
They are characterized by coarse oscillations in the interior, which refine close to the boundary,
as illustrated in  Figure \ref{figlambr}. At
a heuristic level, the transition between coarse and fine oscillations can be understood as the
result of the competition between the miminization of the  total length of the interfaces,
the energetic cost of bending the domains, and the boundary conditions.

The mathematical study of  the subject began with the work of Kohn and M\"uller in the 90s \cite{kohn-mueller:92,kohn-mueller:94}, who proposed a simple
scalar model for martensitic microstructures close to an interface with austenite, see Section~\ref{sec:introkm} below for details. 
Their basic finding was the presence of a transition between a regime in which the energy scales proportional to 
$\eps^{1/2}\mu^{1/2}$, with $\eps$ being the surface energy density and $\mu$ the ratio between
the austenite and the martensite elastic coefficients, and a regime in which the energy scales proportional to $\eps^{2/3}$. 
The first regime corresponds to a one-dimensional laminar pattern, the second one to a branching-type pattern, as illustrated 
in Figure \ref{figlambr}.

\begin{figure}
\centerline{ \includegraphics[height=5cm]{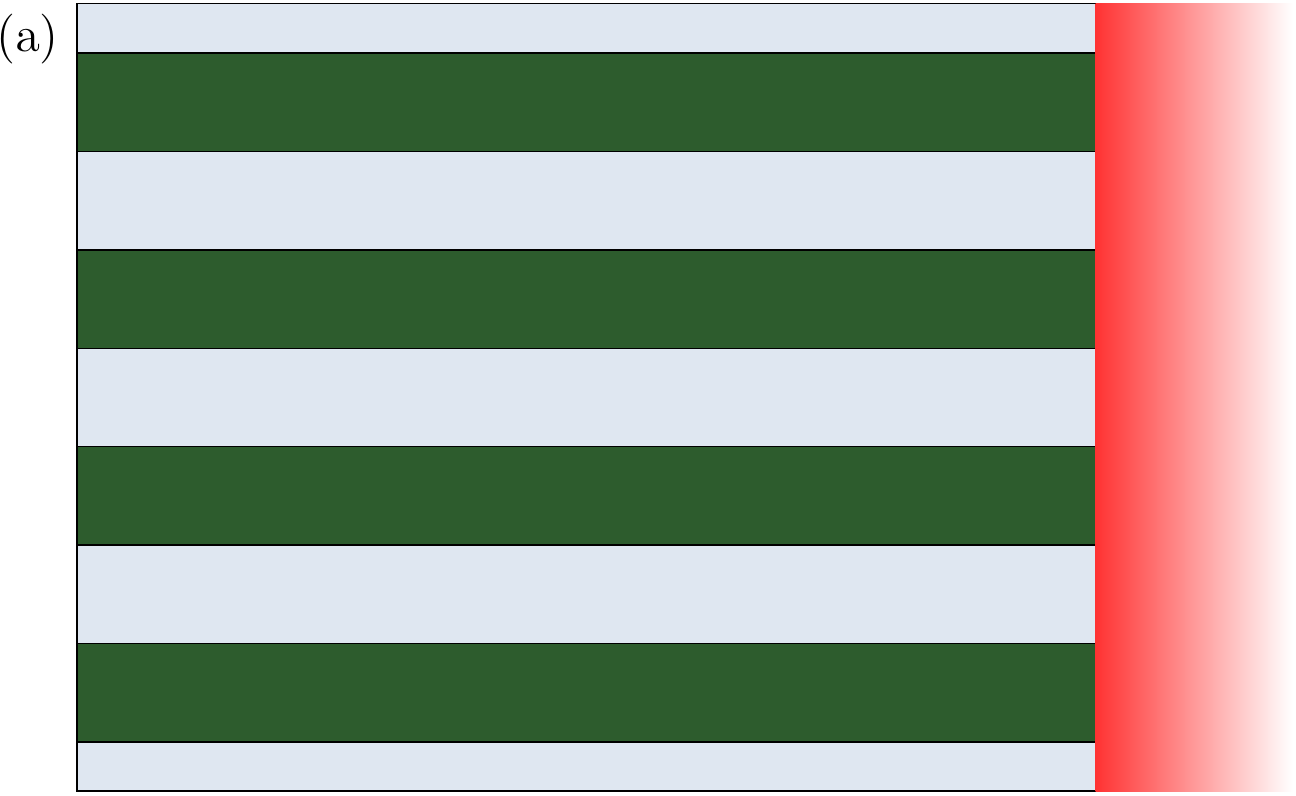}\hskip.8cm
 \includegraphics[height=5cm]{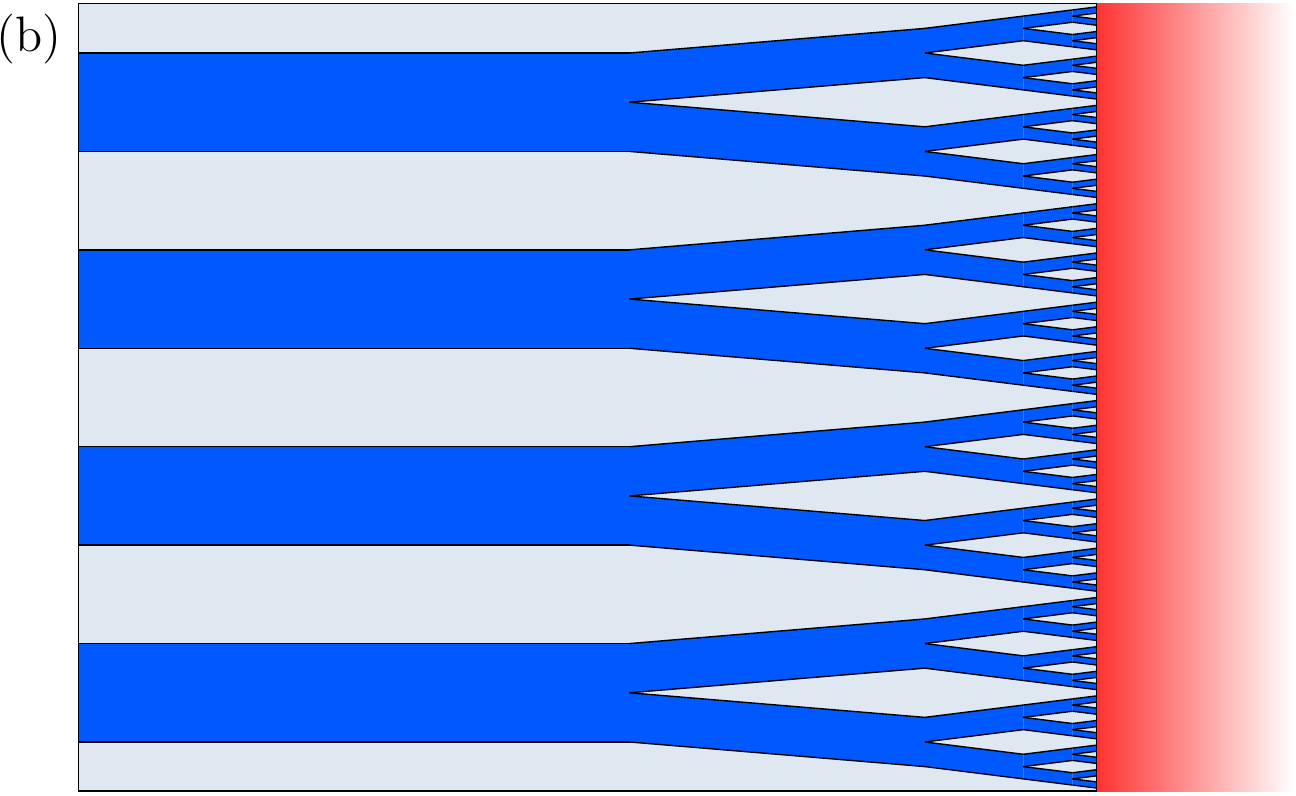}}
 \caption{Left: sketch of a laminate construction. Right: sketch of the branching construction,
 both for volume fraction 1/2 as in the original Kohn-Müller model. In both cases the interface to austenite
 is on the right.}
 \label{figlambr}
\end{figure}

Their results have been  refined in specific regimes  \cite{conti:00,giuliani-mueller:12},
  extended to related scalar-valued models in different parameter regimes \cite{schreiber:94,zwicknagl:14}
and to vector-valued models  \cite{capella-otto:09,knuepfer-kohn:11,CapellaOtto2012,knuepfer-kohn-otto:13,diermeier:13,chan-conti:14,chan-conti:14-1,bella-goldman:15}.
Similar results have been obtained in other variational models, including the 
magnetic structures in ferromagnets originally studied by Landau \cite{choksi-kohn:98,choksi-et-al:98,otto-viehmann:10,knuepfer-muratov:11}, 
flux-domain patterns in type-I superconductors  \cite{choksi-et-al:08,conti-et-al:15}, 
diblock copolymers \cite{Choksi01,alberti-et-al:09}, 
blistering of thin compressed films  \cite{BCDM00,JinSternberg2,JinSternberg1,belgacem-et-al:02}, wrinkling of stretched thin films \cite{bella-kohn:14}, dislocation patterns in crystal plasticity \cite{conti-ortiz:05}, and compliance minimization \cite{Kohn-Wirth:14-1,Kohn-Wirth:15}. 

In this paper we specifically focus on two problems in this class where the expected microstructure is essentially two-dimensional. 
The first one, and the simpler one, is the Kohn-Müller functional.  The second one is the scalar, but three-dimensional, model of crystal plasticity
from \cite{conti-ortiz:05}.  We present the first model, its physical interpretation and the main results on this functional
in Section \ref{sec:introkm}, the other one in Section \ref{sec:introplastic}. After mentioning some notation in 
Section \ref{secnotation}, we give the proofs for the scaling laws for {\com{the}} two models in Section{\com{s}} \ref{secpfKM} and \ref{sec:disloc} respectively.

\section{The model for martensitic microstructures}\label{sec:introkm}
\begin{figure}
\centerline{
 \includegraphics[width=12cm]{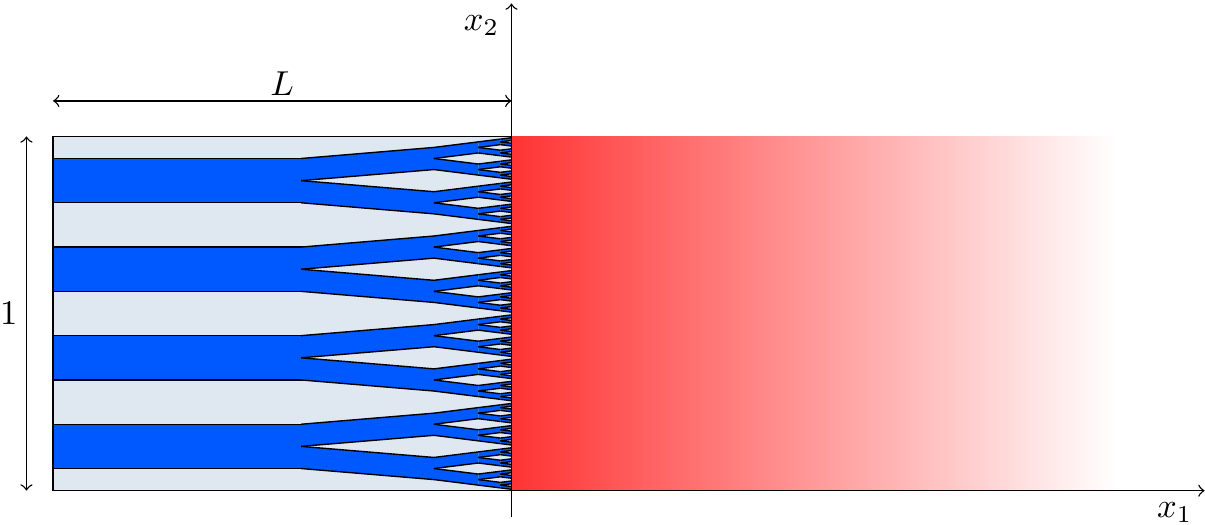}}
 \caption{Sketch of the geometry in the Kohn-Müller model. The shaded area on the right represents the austenite, which 
 extends to infinity.}
 \label{fig-geometry}
\end{figure}
We now introduce our model for martensitic microstructures. Following Kohn and Müller \cite{kohn-mueller:92,kohn-mueller:94}
we work for simplicity in two dimensions, in antiplane shear geometry, with the scalar field $u$ 
representing a deformation in the out-of-plane direction.
We consider one interface between austenite and martensite, located at $\{0\}\times (0,1)$, 
with austenite on the right and martensite on the left, as sketched in Figure \ref{fig-geometry}.
In the austenite, which for simplicity is assumed to extend to infinity, the minimum of the elastic energy is attained at $\nabla u=0$.
The coefficient $\mu>0$ represents the ratio of the elastic coefficients of austenite and martensite, respectively
(this parameter is often denoted by $\beta$ in the literature).
In the martensite, which we assume to cover the domain $(-L,0)\times (0,1)$, there are two minima of the elastic energy density.
After scaling, we can write the energy as 
\begin{equation}\label{eq:KM}
  J(u):=\mu\int_0^\infty\int_0^1 |\nabla u(x)|^2\dx+\int_{-L}^0\int_0^1 (\partial_1u(x))^2 \dx+ \eps\int_{-L}^0\int_0^1 |\partial_2\partial_2u| 
\end{equation}
where admissible functions $u\in W^{1,2}_\loc((-L,\infty)\times(0,1))$ satisfy $\partial_2u\in \{\theta
,-1+\theta \}$ almost everywhere in $(-L,0)\times(0,1)$, for some $\theta\in (0,1/2{\com{]}}$. 
We denote by $\partial_iu:=\frac{\partial}{\partial x_i}u$ the distributional derivative, in the last
term $\partial_2\partial_2 u$ is assumed to be a measure and the term has to be understood distributionally. The 
partial derivative $\partial_2u$ represents the order parameter of the martensitic phase transition, and its preferred values are dictated by crystallography.
The first two terms in \eqref{eq:KM} model the elastic energy contributions of the austenite and the  martensite part, respectively. The last term in \eqref{eq:KM} is a regularization term which penalizes changes in the order parameter, and prevents arbitrarily fine microstructures. It can thus be interpreted as a surface energy term, $\eps$ being a typical surface energy constant per unit length. The energy functional is normalized to set the elastic modulus of martensite to one. \\

Mathematically, the parameter $\theta$ represents a compatibility condition. For $\theta=0$, there exist trivial 
configurations with vanishing energy, while for $\theta>0$, the minimal energy is strictly positive, 
and we expect the formation of microstructures. By swapping the two variants of microstructure one
can assume without loss of generality that  $0<\theta\leq 1/2$. 
Experimental findings suggest that the size of $\theta$ is closely linked to the width of the thermal hysteresis  loop
in the austenite-martensite phase transition {\BZ{\cite{james-zhang:05,cui-et-al:06,zhang:07,zarnetta-et-al,louie-et-al,Sriva-et-al,bechthold-et-el:12}, and such low-hysteresis alloys have been found to exhibit peculiar microstructures, see e.g. \cite{delville-et-al2,delville-et-al,Shi-et-al:14}}}.

\begin{figure}
\centerline{
 \includegraphics[width=7cm]{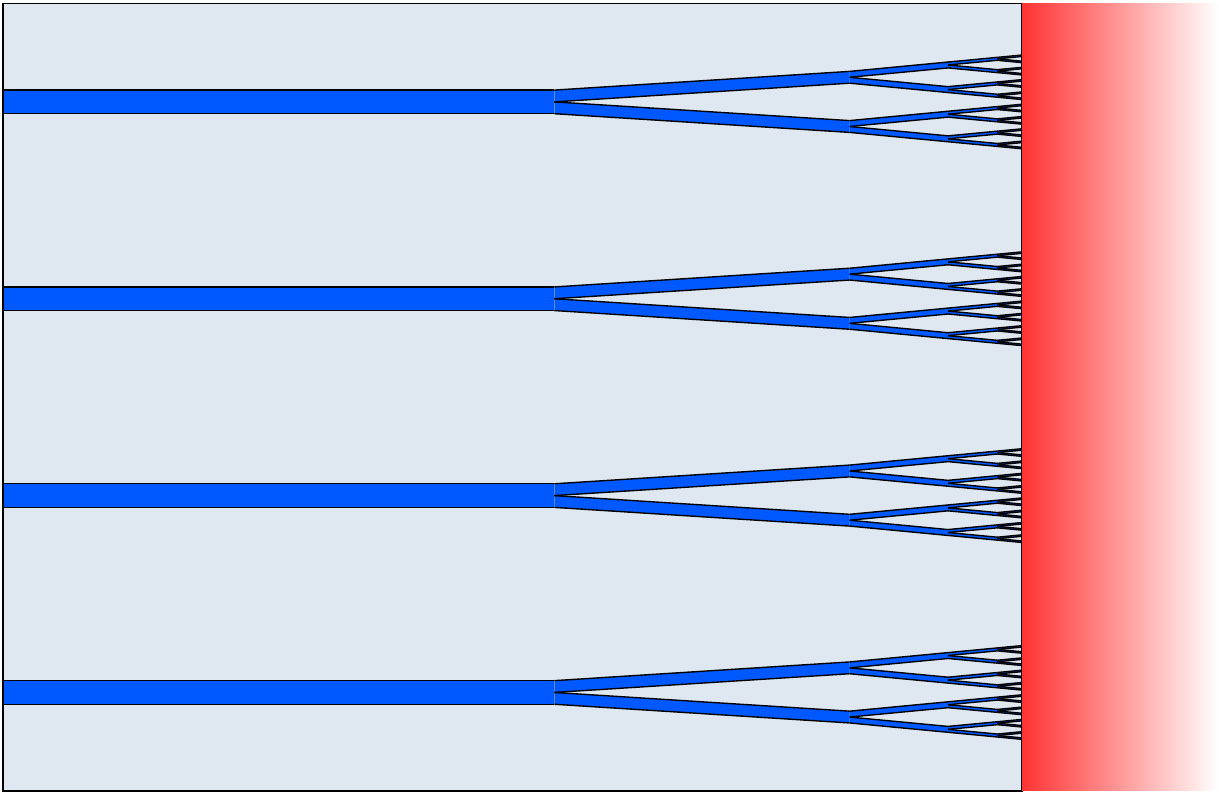}\hskip1cm
 \includegraphics[width=7cm]{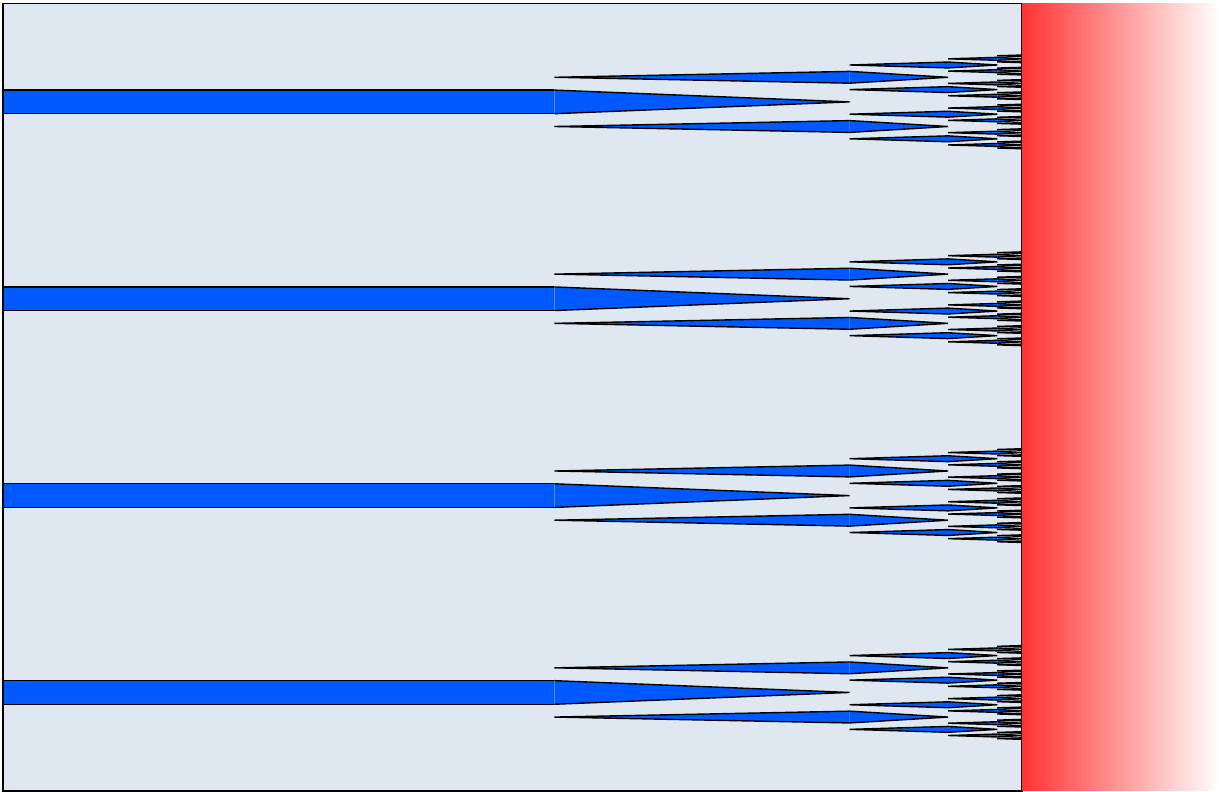}}
\caption{Sketch of the new two-scale-branching regime.
Left: geometry in which the minority phase is connected. Right: geometry in which the majority phase is connected.}
\label{fig:fig-new}
\end{figure}

For the symmetric case $\theta=\frac{1}{2}$, the scaling law for the minimal energy \eqref{eq:KM}  has been 
studied in \cite{kohn-mueller:92,kohn-mueller:94,conti:06}. 
We point out that the same scaling law for \eqref{eq:KM} holds if one restricts to periodic functions $u$, i.e., 
$u(x,0)=u(x,1)$, and  if $\mu\geq 1$, then the scaling results for $\theta
=\frac{1}{2}$ can be generalized to the case $\theta
<\frac{1}{2}$ with the obvious modifications in the lower bound, and the same kinds of test
functions to prove the upper bound (see \cite{zwicknagl:14}). In all cases, one observes only scaling regimes 
that correspond to uniform structures, to laminated structures or to geometrically refining branching patterns (see \cite{kohn-mueller:94,zwicknagl:14,diermeier:10}). We shall show in this paper that for $\mu\ll 1$ there is an intermediate regime between laminates and branching,
with the geometry sketched in Figure~\ref{fig:fig-new}.
A related intermediate regime between the branching constructions and the laminates in case of highly 
unequal volume fractions has been observed in a three-dimensional model of type-I superconductors  \cite{choksi-et-al:98,choksi-et-al:08,conti-et-al:15}. 
{\com
The scaling of the energy was different, and also the geometry was not the same as here. 
Indeed, in the superconducting case 
the conservation of flux forces the minority (normal) phase to be connected, as in the left panel of Figure \ref{fig:fig-new}. 
Here a second construction is possible, in which the majority phase is connected, as in the right panel of Figure \ref{fig:fig-new},
which has a smaller surface energy, as will become clear in Lemma \ref{lem:branch} below.
For the dislocation problem only the second construction gives the optimal energy scaling.}

We consider the case of general $\mu$, in particular $\mu\ll 1$. It turns out that in this case the choice 
of boundary conditions at the top and bottom boundaries matter. Precisely, we consider two natural classes of admissible functions,
the first one with Neumann boundary conditions on the horizontal sides,
\begin{eqnarray}
 \label{eq:A1}
\mathcal{A}_N&:=&\{u\in W^{1,2}_{\com{\text{loc}}}((-L,\infty)\times(0,1)):\ \partial_2u\in\{\theta, 1-\theta\} \text{\ a.e. in $(-L,0)\times(0,1)$ and} \nonumber\\
&& \partial_2\partial_2u\text{\ finite signed Radon measure} \},
\end{eqnarray}
and the one with periodic boundary conditions on the horizontal sides,
\begin{eqnarray}
 \label{eq:A2}
\mathcal{A}_P:=\{u\in\mathcal{A}_N:\ u(\cdot,0)=u(\cdot,1)\}.
\end{eqnarray}
We prove the following scaling laws for $J$ (see Propositions \ref{prop:KMub} and \ref{prop:KMlb}). 
\begin{theorem} \label{th:1}
For all $\eps$, $\mu$, $L>0$ and all $0<\theta\leq\frac{1}{2}$, we have with $\hat{\eps}:=\eps/\theta^2$,
\begin{eqnarray}\label{eq:scaling1}
\min_{u\in\mathcal{A}_N} J(u)\sim \theta^2\min\left\{\hat{\eps}^{2/3}L^{1/3},\,\hat{\eps}^{1/2},\,(\hat{\eps} L\mu)^{1/2}\ln^{1/2}\left(3+\frac{\hat{\eps}}{\mu^3 L}\right),\,\left(\hat{\eps} L\mu\right)^{1/2}\ln^{1/2}\left(\frac{1}{\theta}\right),\, \mu\right\},\nonumber\\
\quad
\end{eqnarray}
and
\begin{eqnarray}\label{eq:scaling2}
\min_{u\in\mathcal{A}_P} J(u)\sim \theta^2\max\left\{\hat{\eps} L,\ \min\left\{\hat{\eps}^{2/3}L^{1/3},\ (\hat{\eps} L\mu)^{1/2}\ln^{1/2}\left(3+\frac{\hat{\eps}}{\mu^3 L}\right),\,\left(\hat{\eps} L\mu\right)^{1/2}\ln^{1/2}\left(\frac{1}{\theta}\right)\right\}\right\}.\nonumber\\
\quad
\end{eqnarray}
\end{theorem}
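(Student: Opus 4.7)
The plan is to establish both scaling laws by separately proving matching upper and lower bounds. The outer $\max$ in the periodic case arises from a trivial lower bound $\theta^2\hat\eps L$ that no $u\in\mathcal A_P$ can avoid, since periodicity in $x_2$ forces at least two jumps of $\partial_2u(x_1,\cdot)$ per unit $x_2$-length, producing $\int_{(-L,0)\times(0,1)}|\partial_2\partial_2 u|\gtrsim L$.

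For the upper bound I would combine five explicit admissible constructions. First, the affine map $u(x)=\theta x_2$ on the martensite, harmonically extended to the austenite, has finite Dirichlet energy $\sim\mu\theta^2$ and realizes the $\theta^2\mu$ bound (Neumann only, since affine data are incompatible with periodicity). Second, a single-interface or truncated-branching construction localized in an $x_1$-band of optimal depth near $\{0\}\times(0,1)$ gives the $\theta^2\hat\eps^{1/2}$ term. Third, a pure $x_2$-laminate of period $h$ with slopes $\theta,\theta-1$ and respective volume fractions $1-\theta,\theta$ has interface length $L/h$, vanishing martensite elastic energy, and austenite Dirichlet energy obtained from the Fourier expansion of the asymmetric sawtooth, whose coefficients satisfy $\hat u_k\sim\theta h/k$ up to frequency $k\sim 1/\theta$ and then decay faster, yielding $\sim\mu\theta^2 h\ln(1/\theta)$; optimizing in $h$ gives $\theta^2(\hat\eps L\mu)^{1/2}\ln^{1/2}(1/\theta)$. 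Fourth, the classical Kohn-M\"uller self-similar construction, adapted to volume fraction $\theta$, produces $\theta^2\hat\eps^{2/3}L^{1/3}$. Finally, the new two-scale construction of Figure~\ref{fig:fig-new}, consisting of a coarse laminate at outer scale $h_1$ whose individual needles of the isolated phase branch at a finer scale $h_2$ close to the austenite, yields the $\theta^2(\hat\eps L\mu)^{1/2}\ln^{1/2}(3+\hat\eps/(\mu^3 L))$ term after optimizing three costs (surface, austenite trace, martensite bending) in two scales.

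For the lower bound I would slice in $x_1$: for a.e.\ $x_1\in(-L,0)$ the function $u(x_1,\cdot)$ is piecewise affine with slopes in $\{\theta,\theta-1\}$, and letting $N(x_1)$ count the jumps of $\partial_2u(x_1,\cdot)$, the surface energy bounds $\eps\int N\,dx_1$ from above. The martensite term bounds $\int(\partial_1 u)^2$, while the austenite term bounds the $\dot H^{1/2}$ norm of the trace $u(0,\cdot)$. The central technical step is a slicewise interpolation inequality relating $\|u(x_1,\cdot)\|_{\dot H^{1/2}}^2$ to $\theta$ and $N(x_1)$, obtained by splitting the trace into a low-frequency part (bounded below in terms of $\theta$) and a high-frequency part (bounded below using $N$); combined with Cauchy-Schwarz in $x_1$ via the martensite term, this yields each of the candidate lower bounds after optimizing the decomposition.

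The main obstacle will be the lower bound matching the two-scale expression $\theta^2(\hat\eps L\mu)^{1/2}\ln^{1/2}(3+\hat\eps/(\mu^3 L))$. Unlike the other regimes, no single choice of test frequency extracts the sharp answer: one must decompose the trace dyadically into a logarithmic range of frequency bands and balance surface, martensite, and austenite contributions band by band. The cut-offs of the dyadic decomposition are dictated by the various natural length scales of the problem, and summing the band-by-band estimates produces the factor $\ln(3+\hat\eps/(\mu^3 L))$. Once this refined estimate is in place, the simpler lower bounds $\theta^2\mu$, $\theta^2\hat\eps^{1/2}$, $\theta^2\hat\eps^{2/3}L^{1/3}$, and $\theta^2\hat\eps L$ follow by standard one-scale arguments in their respective parameter regimes.
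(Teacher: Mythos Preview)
Your upper-bound programme matches the paper's essentially construction by construction: the affine state, truncated branching, the $\theta$-laminate, classical Kohn--M\"uller branching, and the new two-scale branching are exactly the five test functions used (the paper unifies the latter four as special cases of one parametrized ``two-scale branching'' family, but the content is the same).

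Your lower-bound route, however, differs from the paper's in a substantive way. You propose to work in Fourier space, splitting the trace into dyadic frequency bands and summing band-by-band to produce the $\ln(3+\hat\eps/(\mu^3L))$ factor. The paper instead works entirely in real space: after selecting a good interior slice $x_1^*$ with few interfaces, it identifies the minority-phase set $\omega\subset(0,1)$ on that slice (showing $|\omega|\sim\theta$), and then tests $\partial_2 v(x_1^*,\cdot)$ against a single explicit function $\psi(t)=\max_i[\ln\frac{1}{6m}-(\ln\frac{|t-y_i|}{g_i})_+]_+$, a superposition of logarithmic bumps centred on the components of $\omega$. The logarithm emerges in one stroke from $\|\psi\|_{L^\infty}\sim\ln\frac1m$ together with $[\psi]_{H^{1/2}}^2\lesssim n\ln\frac1m$ and $\|\psi'\|_{L^2}^2\lesssim n/(\lambda m)$; the free parameter $m$ is then optimized in a separate proposition. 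One technical advantage of the paper's approach is that $\psi$ is adapted to the \emph{actual} positions and widths of the components of $\omega$, which a translation-invariant Littlewood--Paley decomposition cannot see; your dyadic scheme would need to recover this geometric localisation indirectly, and it is not clear from your sketch how you intend to do so. A second point: your phrase ``slicewise interpolation inequality relating $\|u(x_1,\cdot)\|_{\dot H^{1/2}}^2$ to $\theta$ and $N(x_1)$'' conflates interior slices with the boundary trace --- only $u(0,\cdot)$ carries an $H^{1/2}$ cost, while only interior slices have the piecewise-affine structure; the paper bridges the two via an $L^1$ estimate $\|v(x_1^*,\cdot)-v(0,\cdot)\|_{L^1}\le\ell^{1/2}F(v)^{1/2}$ and the duality pairing of Lemma~\ref{lemmah12}, not via an $H^{1/2}$ norm of an interior slice. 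Your scheme may still be made to work, but the paper's single-test-function argument is more direct and avoids the dyadic summation altogether.
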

Here and in the rest of the paper $a\sim b$ means that there is a universal constant $c>0$ such that $\frac1c a \le b \le c\, a$.
The resulting phase diagrams are, for $L=1$, illustrated in Figure \ref{fig:fig2}.
\begin{remark}\label{rem:pvsn}
If $\theta\ll 1$, the choice of boundary conditions affects the scaling behavior in a non-trivial way. Precisely, for periodic boundary conditions we always have, by the constraint on $\partial_2 u$, the lower bound $\min_{u\in\mathcal{A}_P}J(u)\geq\eps L$. 
Then, for $\theta\approx \frac{1}{2}$, by Theorem \ref{th:1},
\begin{eqnarray*}
\min_{u\in\mathcal{A}_P} J(u)&\sim&\max\left\{\eps L,\ \min\left\{\eps^{2/3}L^{1/3},\ (\eps L\mu)^{1/2}\right\}\right\}=\max\left\{\eps L,\ \min\left\{\eps^{2/3}L^{1/3},\ (\eps L\mu)^{1/2},\ \mu\right\}\right\}\\
&\sim& \max\{\eps L,\ \min_{u\in\mathcal{A}_N}J(u)\},
\end{eqnarray*}
while for $\theta\ll\frac{1}{2}$, we have the different behavior
\begin{eqnarray*}
&&\min_{u\in\mathcal{A}_P} J(u)\sim \theta^2\max\left\{\hat{\eps} L,\ \min\left\{\hat{\eps}^{2/3}L^{1/3},\ (\hat{\eps} L\mu)^{1/2}\ln^{1/2}\left(3+\frac{\hat{\eps}}{\mu^3 L}\right),\,\left(\hat{\eps} L\mu\right)^{1/2}\ln^{1/2}\left(\frac{1}{\theta}\right)\right\}\right\}\\
&=& \theta^2\max\left\{\hat{\eps} L,\ \min\left\{\hat{\eps}^{2/3}L^{1/3},\ (\hat{\eps} L\mu)^{1/2}\ln^{1/2}\left(3+\frac{\hat{\eps}}{\mu^3 L}\right),\,\left(\hat{\eps} L\mu\right)^{1/2}\ln^{1/2}\left(\frac{1}{\theta}\right),\ \mu\ln\frac{1}{\theta}\right\}\right\}\\
&\not\sim& \theta^2\max\left\{\hat{\eps} L,\ \min\left\{\hat{\eps}^{2/3}L^{1/3},\ (\hat{\eps} L\mu)^{1/2}\ln^{1/2}\left(3+\frac{\hat{\eps}}{\mu^3 L}\right),\,\left(\hat{\eps} L\mu\right)^{1/2}\ln^{1/2}\left(\frac{1}{\theta}\right),\ \mu\right\}\right\}\\
&\sim&\max\{\eps L,\min_{u\in\mathcal{A}_N}J(u)\}.
\end{eqnarray*}
The origin of the logarithmic correction in case of periodic boundary conditions will be outlined in Remark \ref{rem:singlelam}.
\end{remark}
\begin{figure}
\centerline{
 \includegraphics[height=6cm]{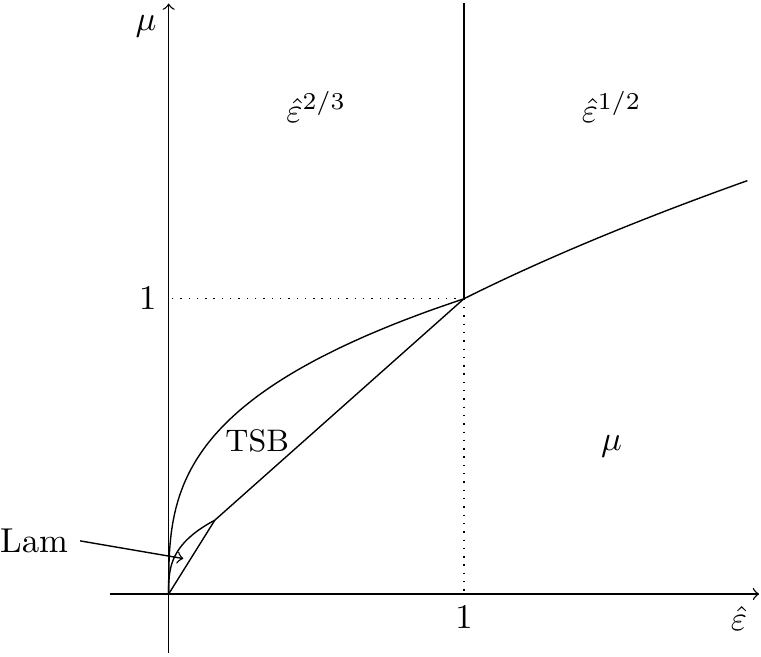}\hskip10mm
 \includegraphics[height=6cm]{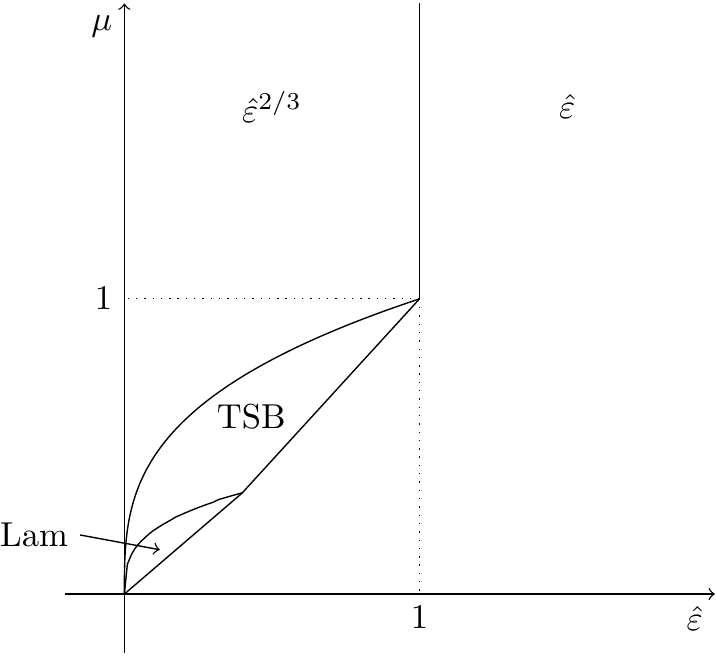}}
\caption{Phase diagrams as given in \eqref{eq:scaling1} (left) and \eqref{eq:scaling2} (right) with $L=1$.
The regions with energy proportional to $\hat\eps^{2/3}$ corresponds to the branching patterns of Figure 
\ref{figlambr}(b), the region marked ``TSB'' is where the new two-scale-branching regime illustrated in Figure \ref{fig:fig-new} is optimal,
the region marked ``Lam'' (very small in the left panel) is where the laminate pattern illustrated in 
\ref{figlambr}(a) is optimal.}
\label{fig:fig2}
\end{figure}

\section{The model for plastic microstructure}\label{sec:dislocmodel}
\label{sec:introplastic}
The second functional we study is a model for microstructures in crystal plasticity 
with large latent hardening that was proposed in \cite{conti-ortiz:05}.
Let  $\Omega:=(0,L)^3$ describe a grain of a plastic crystal, 
in which two slip systems are active. In a similar spirit as in the Kohn-Müller model 
of Section~\ref{sec:introkm} we focus on a scalar $u$, which represents a component of the deformation, and assume that 
it is affected by two slip systems, with slip plane normals
$e_\xi:=(e_1+e_2)/\sqrt2$ and $e_\eta:=(e_1-e_2)/\sqrt2$.
In the limit of large latent hardening the two slip systems cannot be both activated at the same point in 
space, therefore the plastic strain $\beta$ is pointwise parallel to either $e_\xi$ or $e_\eta$. 
We further assume that each slip system is only active with one orientation.
The class of admissible deformations and plastic strains therefore takes the form
\begin{eqnarray}\label{eq:disadm}
 \mathcal{A}:=\{(u,\beta)\in L^1(\Omega)\times L^1(\Omega;\R^3):\ 
 \beta_3=0,\ \beta_\xi\beta_\eta=0\text{\quad and\quad }\beta_\eta, \beta_\xi\geq 0\text{\quad a.e.}\}\,.
\end{eqnarray}
Here and in the rest of the paper, given a vector $v\in \R^3$ we denote the components along $\xi$ and $\eta$ by 
$v_\xi:=v\cdot e_\xi$ and $v_\eta:=v\cdot e_\eta$, and the Cartesian ones by $v_i:=v\cdot e_i$, $i=1,2,3$.

The energy is given by the sum of the elastic energy and the line-tension energy of the geometrically necessary
dislocations. Dislocations are topological defects in crystals which are responsible for plastic deformation 
processes, they are necessarily present whenever the plastic part of the deformation gradient is not a gradient field itself.
Geometrically necessary dislocations are the dislocations whose presence can be inferred from 
 the fact that the plastic strain $\beta$ is
not a gradient field. Starting from a variety of microscopic models it has been proven that the energetic contribution
of such dislocations is linear in the curl of $\beta$, with a coefficient which depends on the local Burgers
vector and slip-plane normal, see for example 
\cite{GarroniMueller2006,GarroniLeoniPonsiglione2010,ContiGarroniMueller2011,MuellerScardiaZeppieri2014,ContiGarroniOrtiz}. 

Working in the coordinates $(e_\xi,e_\eta,e_3)$ it is easy to see that the relevant terms
are $\partial_\xi\beta_\eta$ and $\partial_\eta\beta_\xi$. A detailed analysis of the latent-hardening condition 
from a relaxation viewpoint leads to the interpretation of $|\curl\beta|$ as $|\partial_\xi\beta_\eta|+|\partial_\eta\beta_\xi|$, where
both derivatives are interpreted distributionally, see  \cite{conti-ortiz:05} for a motivation.
In the present setting there is only one Burgers vector, since $u$ is a scalar. In order to study  the scaling
of the energy we can neglect the depencence on orientation and consider an isotropic penalization of $\curl\beta$. It is however
important that not all components of the gradient $D\beta$ enter the energy.

The elastic energy can be computed from the elastic strain, which is $Du-\beta$ in $\Omega$ and  $Du$ outside.
Denoting by $\eps>0$ the line-tension energy of a dislocation, the functional takes the form
{\com\begin{eqnarray}\label{eq:disloc}
 E(u,\beta):=\int_\Omega \left[|Du(x)-\beta(x)|^2 \dx+ \eps \left(|\partial_\xi\beta_\eta|+|\partial_3\beta_\eta|+ |\partial_\eta\beta_\xi|
 + |\partial_3\beta_\xi|\right)\right]
 +\mu\int_{\R^3\setminus\Omega} |D(u-u_0)(x)|^2\dx\,.\nonumber\\
 \qquad
\end{eqnarray}
As in (\ref{eq:KM}), the terms $\partial_i\beta_j$ are interpreted distributionally.
}The relative activity of the two slip systems is fixed by the far-field deformation $u_0$, which will be taken affine.
We refer to \cite{conti-ortiz:05} for a more detailed physical motivation of the model and to
 \cite{AnD14,DondlMicroplast} for a discussion of the relaxation of $E$.

The energy scaling for the case of the far-field deformation $u_0(x)=x_1$, corresponding to a half-half mixture of the two slip directions $e_\xi$ and $e_\eta$, was studied in  \cite{conti-ortiz:05}. 
The key result was that the optimal energy scales, for small $\eps$, as the minimum of 
$\eps^{1/2}\mu^{1/2}$ and $\eps^{2/3}$, similarly to the Kohn-M\"uller model. 
The first scaling regime corresponds to the experimentally known Hall-Petch law, which states that
the critical stress for plastic deformation in a polycrystal scales as the grain size to the power $-1/2$.

Here we consider a situation in which 
the boundary data favor states in which one of the two slip systems dominates, corresponding to a macroscopic
forcing of the type
\begin{eqnarray}\label{eq:u0}
 u_0(x):=(1-\theta)x_\xi+\theta x_\eta
\end{eqnarray}
where $\theta\in (0,1/2)$. 
 We  derive the following scaling law for $E$ as defined in \eqref{eq:disloc} (see Propositions \ref{prop:dislocub} and  \ref{prop:disloclb}).
\begin{theorem}\label{th:2}
For all $\eps$, $L$, $\mu>0$ and all $\theta\leq\frac{1}{2}$, we have with $\hat{\eps}:=\eps/(L\theta^2)$,
 \[\inf_\mathcal{A} E(u,\beta)\sim L^3 \theta^2\min\left\{\hat{\eps}^{2/3},\,(\hat{\eps} \mu)^{1/2}\ln^{1/2}\left(3+\frac{\hat{\eps}}{\mu^3 }\right),\,\left(\hat{\eps} \mu\right)^{1/2}\ln^{1/2}\left(\frac{1}{\theta}\right),\, \mu,\, 1\right\}.\]
\end{theorem}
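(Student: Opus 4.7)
The plan is to prove matching upper and lower bounds on $E$, mirroring the strategy of Theorem~\ref{th:1} but adapted to the 3D cube geometry. Throughout write $\hat\eps = \eps/(L\theta^2)$.

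\emph{Upper bound.} I construct five admissible pairs $(u,\beta)\in\mathcal{A}$, one realizing each of the five terms in the minimum. For the uniform bound $L^3\theta^2$, set $\beta := (1-\theta)e_\xi\chi_\Omega$ and $u := u_0$: then $|Du-\beta|^2=\theta^2$ pointwise in $\Omega$, $D\beta=0$ inside, and the exterior integrand vanishes. For the matched bound $L^3\theta^2\mu$, keep the same $\beta$ but set $u := (1-\theta)x_\xi$ on $\Omega$, continued to $u=u_0$ outside via a cutoff on a corona of width $\sim L$; only the exterior contributes, at order $\mu\theta^2 L^3$.

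The remaining three bounds come from constructions that are independent of $x_3$, so that $\partial_3\beta\equiv 0$, reducing the problem to a 2D cross-section. The key 2D building block is a rank-1 compatible laminate with stripes perpendicular to $e_2$ (since $e_\eta-e_\xi=-\sqrt{2}\,e_2$ is parallel to $e_2$): one takes $\beta=Du=e_\xi$ on majority stripes of $x_2$-width $h(1-\theta)$ alternating with $\beta=Du=e_\eta$ on minority stripes of width $h\theta$. The interior elastic term vanishes, the curl term contributes $\sim\eps L^3/h$, and the trace of $u-u_0$ on the lateral faces of $\partial\Omega$ oscillates in $x_2$ with amplitude $\sim h\theta$ and period $h$. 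Refining this laminate self-similarly towards the lateral faces yields the branching bound $L^3\theta^2\hat\eps^{2/3}$; the two logarithmic two-scale-branching bounds are obtained from the 3D version of the construction illustrated in the right panel of Figure~\ref{fig:fig-new} (the variant optimal for dislocations, as noted after Lemma~\ref{lem:branch}), in which a coarse laminate is supplemented by a branched correction within each minority finger, with the logarithmic factors arising from the optimization between the two separate scales, as in Remark~\ref{rem:singlelam}.

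\emph{Lower bound.} The lower bound follows the same template as for Theorem~\ref{th:1}: each term in the minimum is derived as an independent lower bound on $E$, and one then takes the minimum. The uniform bounds $L^3\theta^2$ and $L^3\theta^2\mu$ follow from combining the pointwise constraint $\beta_\xi\beta_\eta=0$ with the decay $u\to u_0$ at infinity, via an averaging argument that compares $\int_\Omega Du$ (controlled by the trace of $u$ on $\partial\Omega$) with $\int_\Omega\beta$. The branching and two-scale-branching bounds come from interpolation inequalities of the Kohn--Müller type, linking $\|Du-\beta\|_{L^2}^2$, $\|\curl\beta\|_{\mathcal M}$, and a trace control on $\partial\Omega$; these arguments are essentially two-dimensional and are carried out on the $(x_\xi,x_\eta)$-cross-sections of $\Omega$, after using the $L^1$-control of $\partial_3\beta$ in $E$ to select good $x_3$-slices. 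The logarithmic factors appear exactly as in the two-scale-branching lower bound for the KM functional, via a dyadic multiscale decomposition of an arbitrary admissible $(u,\beta)$.

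\emph{Main obstacle.} The principal difficulty will be the logarithmic two-scale-branching lower bounds, responsible for the factors $\ln^{1/2}(3+\hat\eps/\mu^3)$ and $\ln^{1/2}(1/\theta)$. Following the KM strategy, this requires isolating a coarse laminar component and a fine branched correction of an arbitrary admissible pair and tracking how the in-plane surface energy and the exterior Dirichlet energy distribute across a dyadic family of scales. The 3D setting adds bookkeeping, because the exterior Dirichlet energy lives in $\R^3\setminus\Omega$ and the $x_3$-slicing of the interior has to be reconciled with the genuinely 3D character of the exterior extension; each face of $\partial\Omega$, however, reduces to the same 2D half-space harmonic-extension estimate already used for the austenite in the Kohn--Müller setting.
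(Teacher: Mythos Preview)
Your upper-bound plan has a real gap. You claim the branching and two-scale-branching constructions can be taken independent of $x_3$, so that $\partial_3\beta\equiv 0$. But then on the faces $\{x_3=0\}$ and $\{x_3=L\}$ the trace of $u-u_0$ is the full two-dimensional branching pattern, and its $H^{1/2}$-norm is too large: already for $\theta=1/2$ and $\mu\sim 1$, the coarsest layer of the pattern (period $h_0\sim\hat\eps^{1/3}$, amplitude $\sim h_0$) contributes $\mu[u-u_0]_{H^{1/2}}^2\gtrsim\mu h_0\sim\hat\eps^{1/3}$, which dominates the target $\hat\eps^{2/3}$. Refinement towards all four lateral faces is therefore necessary, and that forces $x_3$-dependence, contradicting your premise. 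The paper handles this by composing the one-dimensional Kohn--M\"uller profile $v_{\mathrm{KM}}$ with the signed distance $d(x)$ to $\partial(0,1)^2$ in the $(x_1,x_3)$-plane, so that refinement happens simultaneously towards the $x_1$- and $x_3$-faces. This in turn introduces nontrivial $\partial_3\beta$-terms in the curl energy, and controlling them is exactly why Lemma~\ref{lem:branch}(vi) (the bound $\int|\partial_1\partial_2 b|\le 4\eta$, valid only for the disconnected variant of the branching step) and the finite-level truncation of Remark~\ref{rem:trunc} are needed; this is the point the paper flags when it says that only the disconnected construction gives the optimal scaling for dislocations.

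Your lower-bound outline misses a structural difficulty that is absent from the Kohn--M\"uller model. There $\partial_2 u$ takes only two values, so the set $\omega$ where the minority phase is active is directly available and has measure $\sim\theta$ on each slice; the logarithmic test function is then built around $\omega$. Here $\beta$ is only constrained by $\beta_\xi\beta_\eta=0$ and $\beta_\xi,\beta_\eta\ge 0$, with no pointwise upper bound, so one cannot directly localize where $\beta_\eta$ concentrates, nor bound its size. The paper resolves this with a separate structural result (Lemma~\ref{lemmarect}): combining the constraint $\beta_\xi\beta_\eta=0$ with the two curl bounds on a thin rectangle, one shows that on a good $x_3$-slice the mass of $\beta_\eta$ sits on a controlled number of $x_\eta$-intervals on which $\beta_\xi$ is small. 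Only after this localization can one run the logarithmic test-function argument (Lemma~\ref{lem:dislb}) as in the martensite case. Your description (``interpolation inequalities of the Kohn--M\"uller type'' and ``dyadic multiscale decomposition'') does not account for this step, and without it the lower bound does not get off the ground.
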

Similarly to the martensite case, we recover all regimes from the case of equal volume fraction, and show that there is an additional intermediate scaling regime, which is achieved by a two-scale branching construction. The proof of this result is 
given in Section~\ref{sec:disloc} below.
\section{Preliminaries and notation}\label{secnotation}
Throughout the text, we denote by $c$ positive constants that do not depend on any of the parameters $\eps$, $L$, $\theta$ or $\mu$ but that may change from expression to expression. For $A$, $B>0$, we use the notation $A \lesssim B$ if there is $c>0$ such that $A\leq c B$, and similarly for $\gtrsim$ and $\sim$. For a measurable set $M\subset\R^n$, we denote by $|M|$ its $n$-dimensional Lebesgue measure. 
{\com For $f\in L^1(M)$, $\int_M|\partial_i f|=|\partial_i f|(M)$ denotes the total variation of the distributional derivative along $e_i$.}
For the Kohn-M\"uller model \eqref{eq:KM}, the elastic energy of the austenite part can be expressed as trace norm at the interface, i.e., with $u_0(\cdot):=u(0,\cdot)$,
\[[u_0]_{H^{1/2}_N((0,1))}^2:=\inf\left\{\int_0^\infty\int_0^1 |\nabla v(x)|^2\dx:\ v(0,x_2)=u_0(x_2),\ v\in W^{1,2}_{\text{loc}}((0,\infty)\times(0,1))\right\} \text{\ for \ }u\in\mathcal{A}_N,\]
and  for $u\in\mathcal{A}_P$
\[[u_0]_{H^{1/2}_P((0,1))}^2:=\inf\left\{\int_0^\infty\int_0^1 |\nabla v(x)|^2\dx:\ v(0,x_2)=u_0(x_2),\ v\in W^{1,2}_{\text{loc}}((0,\infty)\times(0,1)),\ v(\cdot, 0)=v(\cdot,1)\right\}.\]
We denote by $H^{1/2}_P((0,1))$ and $H^{1/2}_N((0,1))$ the spaces of functions in $L^2((0,1))$ for which the respective seminorms are finite. 
Note that the above are two different seminorms, and the choice of boundary conditions affects the value of the trace norm,
hence we use two different symbols for $H^{1/2}_N$ and $H^{1/2}_P$. 
Indeed, functions with jump discontinuities are not contained in $H^{1/2}$, hence no jump discontinuity between the two
boundary points is permitted in $H^{1/2}_P$, at variance with $H^{1/2}_N$ (see Remark \ref{rem:singlelam}). For later reference, we recall equivalent representations of the norms.
First (see, for example, \cite{giuliani-mueller:12}),
\begin{eqnarray}\label{eq:H1/2P}
[u_0]_{H^{1/2}_P{\com{((0,1))}}}^2=\int_0^1\int_{-\infty}^{+\infty}\frac{|u_0(z_1)-u_0(z_2)|^2}{|z_1-z_2|^2}\dz\text{\qquad for all\quad}u\in\mathcal{A}_P, 
 \end{eqnarray}
where we identify $u_0$ with its $1$-periodic extension; on the other hand, by Gagliardo's inequality (see, e.g., \cite[Chapter 15.3]{leoni}),
\begin{eqnarray}\label{eq:H1/2N}
[u_0]_{H^{1/2}_N{\com{(}}(0,1){\com{)}}}^2\sim\int_0^1\int_{0}^{1}\frac{|u_0(z_1)-u_0(z_2)|^2}{|z_1-z_2|^2}\dz\text{\qquad for all \quad}u\in\mathcal{A}_N.
\end{eqnarray}
Further, for $u\in H_P^{1/2}((0,1))$, the $H^{1/2}_P$-seminorm can be equivalently characterized in terms of the Fourier series. Precisely, if $u(x)=\sum_{k\in\Z}u_ke^{2\pi ikx}$, then 
\begin{eqnarray}
\label{eq:H1/2Fourier}
[u]_{H^{1/2}_P((0,1))}^2\sim\sum_{k\in\Z}|k||u_k|^2.
\end{eqnarray}
We note the following interpolation result.
\begin{lemma}\label{lemmah12}
There is a constant $c>0$ such that for all $v\in H^{1/2}_N((0,1))$ and $\psi\in H_P^{1/2}((0,1))\cap H^1((0,1))$,  
\begin{eqnarray}\label{eq:interpolFourier2}
\int_0^1v(x)\psi'(x)\dx\leq c [v]_{H^{1/2}_N((0,1))}[\psi]_{H^{1/2}_N((0,1))}.
\end{eqnarray}
\end{lemma}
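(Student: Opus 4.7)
The natural strategy is to reduce to a periodic estimate and then apply Parseval plus Cauchy--Schwarz in Fourier. Since $\psi'$ is unchanged by adding a constant to $\psi$, and $\psi\in H^1((0,1))$ is continuous on $[0,1]$, I would first replace $\psi$ by $\psi-\psi(0)-\int_0^1\psi$, so that WLOG $\psi(0)=\psi(1)=0$ and $\psi$ has mean zero. With these normalizations, $\psi(x)=\sum_{n\neq 0}\hat\psi_n e^{2\pi i nx}$ and by \eqref{eq:H1/2Fourier} $[\psi]_{H^{1/2}_P}^2\sim\sum_n|n||\hat\psi_n|^2$. For periodic $\psi$ the equivalence $[\psi]_{H^{1/2}_P}\sim[\psi]_{H^{1/2}_N}$ can be read off by comparing \eqref{eq:H1/2P} and \eqref{eq:H1/2N} and using the $1$-periodicity to control the tails of the domain of integration in \eqref{eq:H1/2P} by the bulk integral in \eqref{eq:H1/2N}.

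Next I would symmetrize the problem on the enlarged interval $(-1,1)$. Extend $v$ by \emph{even} reflection to $\tilde v$ and then $2$-periodically, so that $\tilde v\in H^{1/2}_P((-1,1))$ and a direct splitting of the Gagliardo double integral gives $[\tilde v]_{H^{1/2}_P((-1,1))}\sim[v]_{H^{1/2}_N((0,1))}$. Extend $\psi$ by \emph{odd} reflection to $\tilde\psi$; the vanishing of $\psi$ at $0$ and $1$ ensures $\tilde\psi$ is continuous and $2$-periodic. A direct change of variables gives the key identity
\[
\int_0^1 v(x)\psi'(x)\,dx\;=\;\tfrac{1}{2}\int_{-1}^1\tilde v(x)\,\tilde\psi'(x)\,dx.
\]
Expanding both extended functions in the periodic Fourier basis $\{e^{i\pi kx}\}_{k\in\Z}$ on $(-1,1)$, Parseval and Cauchy--Schwarz yield
\[
\Bigl|\int_{-1}^1\tilde v\,\tilde\psi'\,dx\Bigr|\;\lesssim\;\Bigl(\sum_k|k||\hat{\tilde v}_k|^2\Bigr)^{1/2}\Bigl(\sum_k|k||\hat{\tilde\psi}_k|^2\Bigr)^{1/2}\;\sim\;[\tilde v]_{H^{1/2}_P}[\tilde\psi]_{H^{1/2}_P},
\]
and the claim would follow from the equivalences between the extended and original seminorms.

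The obstacle, and the genuinely nontrivial point, is the last of these equivalences: one has to establish that the odd extension satisfies $[\tilde\psi]_{H^{1/2}_P((-1,1))}\lesssim[\psi]_{H^{1/2}_N((0,1))}$. Splitting the Gagliardo double integral for $\tilde\psi$ into the four pieces based on the signs of $x$ and $y$, the two same-sign pieces immediately give $2[\psi]_{H^{1/2}_N((0,1))}^2$, but the mixed pieces produce a cross term of the shape
\[
\int_0^1\!\!\int_0^1\frac{|\psi(x)+\psi(y)|^2}{(x+y)^2}\,dx\,dy,
\]
which is not obviously bounded by $[\psi]_{H^{1/2}_N((0,1))}^2$. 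I would handle this by a Hardy-type inequality that exploits $\psi(0)=0$ (and the analogous term near $x=1$ via $\psi(1)=0$) to absorb the near-diagonal singularity of the kernel $1/(x+y)^2$ into the Gagliardo integral of $\psi$. Once that Hardy estimate is in place, the Fourier/Parseval step of the previous paragraph is routine and closes the proof.
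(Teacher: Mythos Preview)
Your overall strategy is the same as the paper's: reduce to $\psi(0)=\psi(1)=0$, extend $v$ evenly and $\psi$ oddly to a doubled interval, apply the Fourier/Parseval estimate there, and then transfer the seminorms back to $(0,1)$. The paper phrases the last step via the extension definition of $H^{1/2}$ rather than the Gagliardo double integral, but the content is identical.

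The step you single out as ``the genuinely nontrivial point'' is indeed the crux, and the Hardy-type argument you propose does not close it. The cross term reduces (after the obvious symmetrization) to controlling $\int_0^1 |\psi(x)|^2/x\,dx$ by $[\psi]_{H^{1/2}_N}^2$ for $\psi$ with $\psi(0)=0$. This is precisely the critical Hardy inequality that \emph{fails} at $s=1/2$; it is the obstruction behind the Lions--Magenes space $H^{1/2}_{00}\subsetneq H^{1/2}$. Equivalently, the odd reflection is \emph{not} bounded from $H^{1/2}_N((0,1))$ into the periodic $H^{1/2}$ on the doubled interval. A concrete obstruction to the lemma as stated: take $v(x)=x-\tfrac12$ and the log-capped bump $\psi_n(x)=\min\{1,\tfrac{1}{\ln n}\ln\tfrac{1}{2\min(x,1-x)}\}$. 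Then $\psi_n\in H^1\cap H^{1/2}_P$, the radial logarithmic extension gives $[\psi_n]_{H^{1/2}_N}^2\lesssim 1/\ln n\to 0$, while a direct computation yields $\int_0^1 v\,\psi_n'\to 1$. So no uniform constant can exist with $[\psi]_{H^{1/2}_N}$ on the right-hand side, and your Hardy step cannot be completed.

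The paper's one-line justification (``extend the test function anti-symmetrically'') runs into the same difficulty: the anti-symmetric extension of a generic $W^{1,2}$ extension $\Psi$ of $\psi$ develops a jump of size $2\Psi(\cdot,1)$ across the reflection line and is therefore not an admissible competitor. What does survive is the inequality with the \emph{Dirichlet-type} seminorm of $\psi$ (infimum over extensions that vanish on the horizontal sides, which coincides with the periodic seminorm of the odd extension on the doubled interval). That weaker version is exactly what the applications need: in Lemma~\ref{lemma1} the test function $\psi$ is compactly supported in $(0,1)$ and its seminorm is bounded in \eqref{eq:psiH1/2P} via an explicit extension already vanishing on the sides.
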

\begin{proof}
We first assume additionally that  $u\in H^{1/2}_P((0,1))$.
Let $u(x)=\sum_{k\in\Z}u_ke^{2\pi ikx}$ and $\psi(x)=\sum_{k\in\Z}\psi_ke^{2\pi ikx}$ be the Fourier series. Then by Cauchy-Schwarz and \eqref{eq:H1/2Fourier}
\begin{eqnarray*}
\left|\int_0^1u(x)\psi'(x)\dx\right|\lesssim\left|\sum_{k\in\Z}ku_k\psi_k\right|\lesssim[u]_{H^{1/2}_P((0,1))}[\psi]_{H^{1/2}_P((0,1))}.
\end{eqnarray*}
 We turn to the general case. Without loss of generality, we may assume that $\psi(0)=\psi(1)=0$. We extend $v$ and $\psi$ to $(0,2)$ by setting $v(1+x):=v(1-x)$ and $\psi(1+x):=-\psi(1-x)$ for $x\in(0,1)$. Then $v$, $\psi\in H^{1/2}_P((0,2))$ and by (i),
\begin{eqnarray*}
\int_0^1v(x)\psi'(x)\dx=\frac{1}{2}\int_0^2v(x)\psi'(x)\dx\leq c [v]_{H^{1/2}_P((0,2))}[\psi]_{H^{1/2}_P((0,2))}\sim [v]_{H^{1/2}_N((0,1))}[\psi]_{H^{1/2}_N((0,1))},
\end{eqnarray*}
where the last step follows directly from the definition by extending test functions for $v$ and $\psi$ on the smaller strip $(0,\infty)\times(0,1)$ (anti-)symmetrically to $(0,\infty)\times(0,2)$, and on the other hand restricting test functions on $(0,\infty)\times(0,2)$ to the smaller strip.
\end{proof}
Similarly, we define the $H^{1/2}_N$-seminorm for functions on rectangles $R\subset\R^2$ via
\[[v]_{H^{1/2}_N(R)}^2:=\inf \left\{\int_R\int_0^\infty |\nabla V(x)|^2\dx:\ V(x_1,x_2,0)=v(x_1,x_2),\ V\in W^{1,2}_{\text{loc}}( R\times(0,\infty))\right\},\]
and for the boundary of a cube $\Omega=(0,L)^3$, we set 
\[[v]_{H^{1/2}(\partial\Omega)}^2:=[v]_{H^{1/2}_N(\partial\Omega)}^2:=\inf \left\{\int_{\R^3\setminus\Omega} |\nabla V(x)|^2\dx:\ V=v\text{\ on\ }\partial\Omega,\ V\in W^{1,2}_{\text{loc}}(\R^3\setminus\Omega)\right\}. \]
We will use the following lemma.
\begin{lemma}\label{lem:H1/2average}
Set $R:=(0,a)\times(0,b)\subset\R^2$. Let $H:R\to\R$ and set $h:(0,b)\to\R$, $h(x_2):=\frac{1}{a}\int_0^aH(x_1,x_2)dx_1$. Then $[h]_{H^{1/2}_N((0,b))}\lesssim a^{-1/2}[H]_{H^{1/2}_N(R)}$.
\end{lemma}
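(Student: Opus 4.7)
The plan is to exploit the harmonic-extension definition of the $H^{1/2}_N$ seminorm and simply average the extension in the $x_1$ direction.

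Concretely, I would fix $\delta > 0$ and pick a near-optimal extension $V \in W^{1,2}_{\mathrm{loc}}(R\times(0,\infty))$ with $V(x_1,x_2,0) = H(x_1,x_2)$ and
\[
\int_R\int_0^\infty |\nabla V(x)|^2\dx \leq [H]_{H^{1/2}_N(R)}^2 + \delta.
\]
Then I define the averaged extension $W:(0,b)\times(0,\infty)\to\R$ by
\[
W(x_2,x_3) := \frac{1}{a}\int_0^a V(x_1,x_2,x_3)\,dx_1.
\]
At $x_3=0$ this yields $W(x_2,0) = h(x_2)$, so $W$ is admissible in the definition of $[h]_{H^{1/2}_N((0,b))}$.

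The key estimate is just Jensen's inequality applied componentwise: for $i\in\{2,3\}$,
\[
|\partial_i W(x_2,x_3)|^2 = \left|\frac{1}{a}\int_0^a \partial_i V(x_1,x_2,x_3)\,dx_1\right|^2 \leq \frac{1}{a}\int_0^a |\partial_i V(x_1,x_2,x_3)|^2 \,dx_1.
\]
Integrating over $(x_2,x_3) \in (0,b)\times(0,\infty)$ and dropping the nonnegative $|\partial_1 V|^2$ term gives
\[
\int_0^\infty\!\!\int_0^b |\nabla W|^2 \,dx_2\,dx_3 \leq \frac{1}{a}\int_R\!\int_0^\infty |\nabla V|^2\,dx \leq \frac{1}{a}\bigl([H]_{H^{1/2}_N(R)}^2 + \delta\bigr).
\]
Taking the infimum over $W$ (equivalently, over $V$) and letting $\delta\to 0$ yields $[h]_{H^{1/2}_N((0,b))}^2 \leq a^{-1}[H]_{H^{1/2}_N(R)}^2$, which is the claim.

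There is essentially no obstacle here: the lemma is a direct consequence of Jensen/Cauchy-Schwarz together with the fact that averaging in a direction tangential to the boundary commutes with restriction to $x_3=0$ and does not increase the Dirichlet energy of the extension. The only point deserving a line of text is the verification that the averaged extension $W$ lies in the correct Sobolev class and has the right trace, which follows from Fubini.
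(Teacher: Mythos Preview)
Your proof is correct and is essentially identical to the paper's argument: pick a near-optimal extension of $H$, average it in $x_1$, and apply Jensen's inequality to bound the Dirichlet energy of the averaged extension. The paper's proof is just a terser version of what you wrote.
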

\begin{proof}
Let $\delta>0$ be arbitrary, and let $\tilde{H}:R\times(0,\infty)\to\R$ be such that $\tilde{H}(x_1,x_2,0)=H(x_1,x_2)$ and $\int_{R\times(0,\infty)}|\nabla \tilde{H}|^2\dx\leq[H]_{H^{1/2}_N(R)}^2+\delta$. Define an extension $\tilde{h}$ for $h$ via $\tilde{h}(x_2,x_3):=\frac{1}{a}\int_0^a\tilde{H}(x_1,x_2,x_3)\dx_1$. Then the assertion follows by Jensen's inequality and arbitrariness of $\delta>0$.

\end{proof}
\section{Proof of the scaling laws for the Kohn-M\"uller model}
\label{secpfKM}
\begin{figure}
\centerline{%
\includegraphics[height=3.5cm]{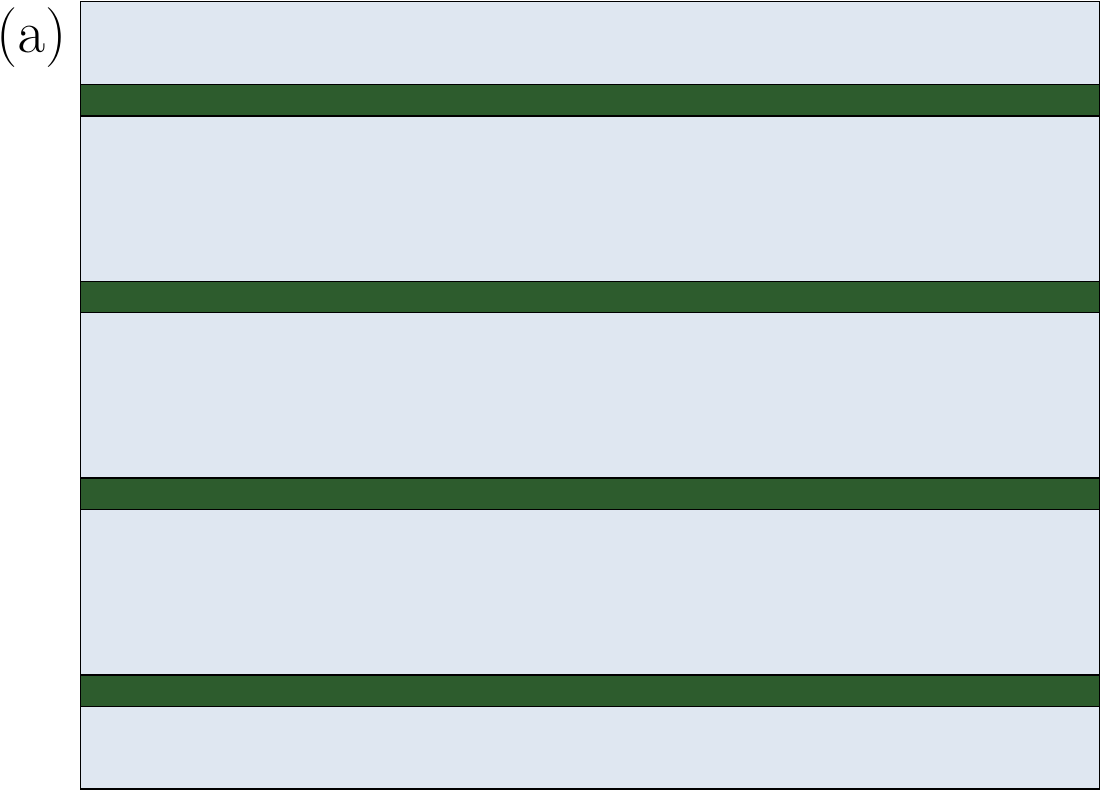}\hskip.8cm
 \includegraphics[height=3.5cm]{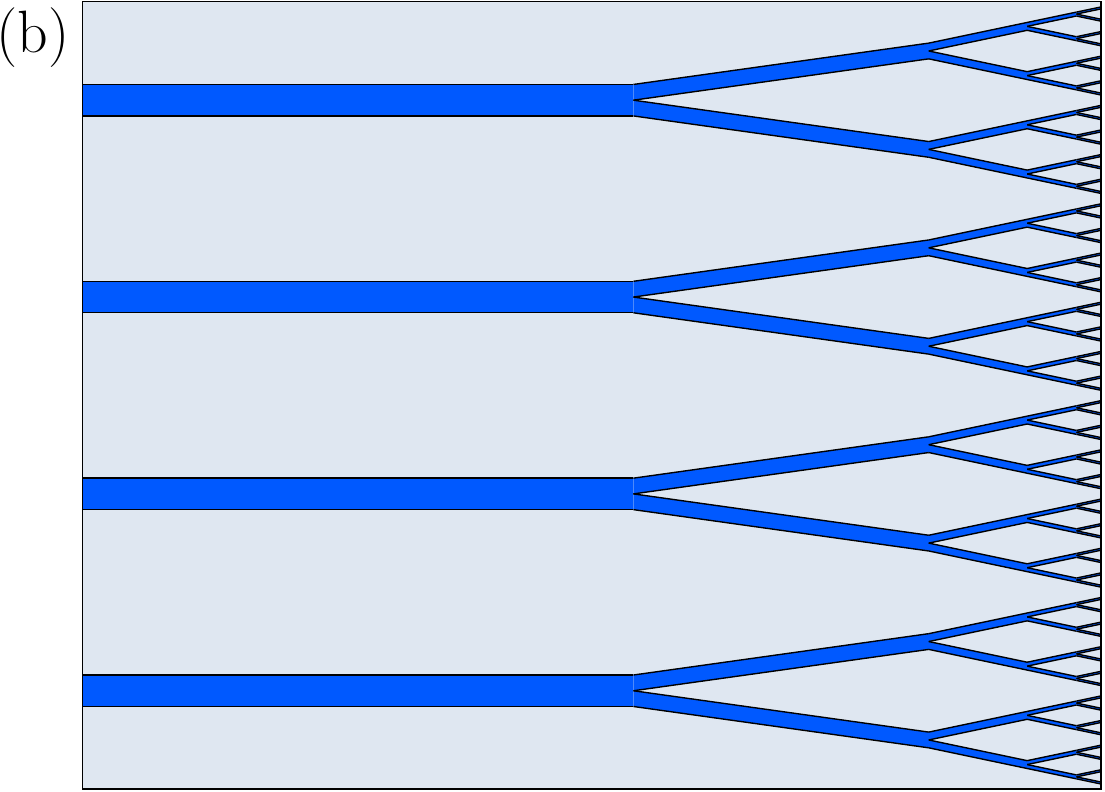}\hskip.8cm
 \includegraphics[height=3.5cm]{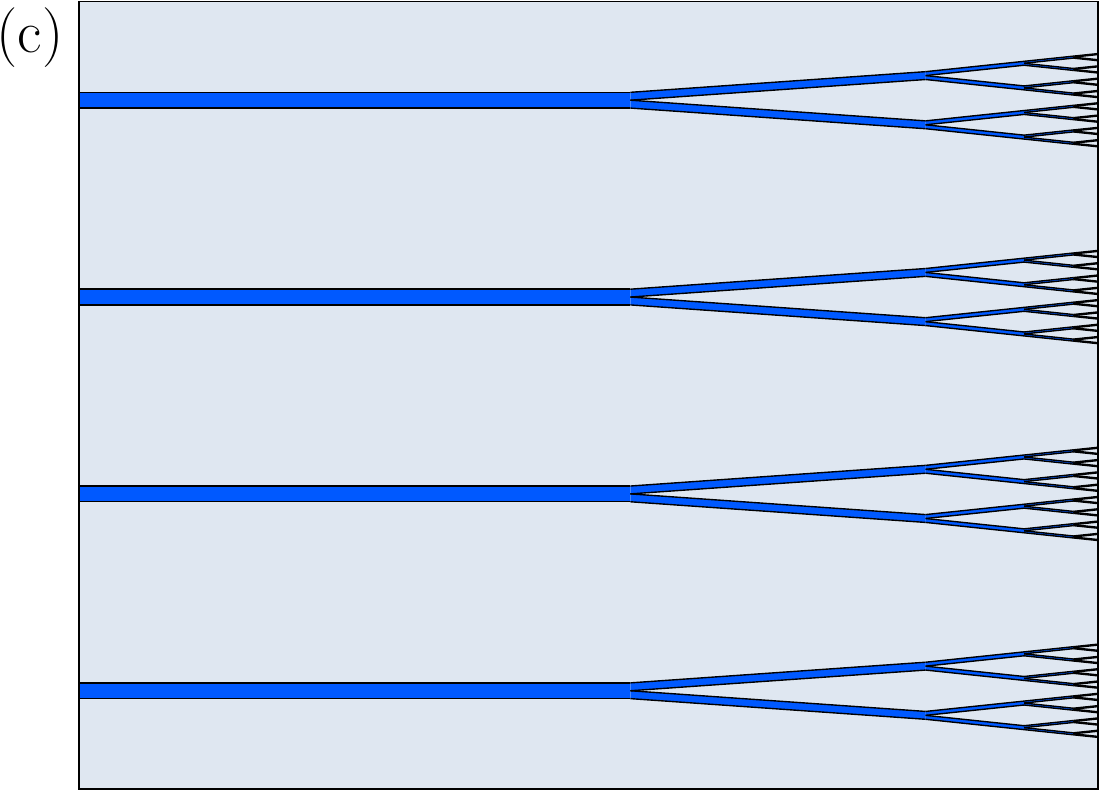}}
 \caption{Sketch of the constructions used in the upper bound.}
 \label{figkmub}
\end{figure}
\subsection{Upper bound}\label{sec:ub}
We use the following construction for the proof of the upper bounds in \eqref{eq:scaling1} and, with slight modifications also in \eqref{eq:scaling2}. 
Our main new contribution in this section is the two-scale branching construction sketched in Figure \ref{figkmub}(c). An important building block for all constructions are the following two functions which also show the main difference between the two kinds of boundary conditions.

\paragraph{Uniform configuration/Single laminate.} 
\begin{itemize}
 \item[(i)] For Neumann boundary conditions, we consider a uniform configuration in the martensite part and set
 \[u_U(x_1,x_2):=\begin{cases}
 \theta x_2 &\text{\qquad if\quad}(x_1,x_2)\in[-L,0]\times[0,1],\\
 (1-x_1)\theta x_2&\text{\qquad if\quad}(x_1,x_2)\in[0,1]\times[0,1],\\
 0&\text{\qquad otherwise}.
    \end{cases}
  \]
   Then $u_U\in \mathcal{A}_N$ and 
   \[J(u_U)=\mu\theta^2 \int_0^1\int_0^1\left(x_2^2+(1-x_1)^2\right)\dx=\frac{2}{3} \mu\theta^2.\]
\item[(ii)] For periodic boundary conditions, we consider a single laminate in the martensite part and set
\begin{eqnarray}
\label{eq:singlelam}
u_{SL}(x_1,x_2):=\begin{cases}
             (-1+\theta)x_2&\text{\qquad if\quad}(x_1,x_2)\in[-L,0]\times[0,\theta],\\
\theta  (x_2-1)&\text{\qquad if\ }-L\leq x_1\leq 1,\ \max\{\theta,\ (1-\theta)x_1+\theta\}\leq x_2\leq 1,\\ 
\theta x_2(1-\frac{1}{(1-\theta)x_1+\theta})&\text{\qquad if\quad}0\leq x_1\leq 1,\ 0\leq x_2\leq (1-\theta)x_1+\theta,\\ 
0&\text{\qquad if\quad}x_1\geq 1.
             \end{cases}
 \end{eqnarray}
Then $u_{SL}\in \mathcal{A}_P$, and 
\[J(u_{SL})\lesssim \eps L+\mu\theta
^2\ln\frac{1}{\theta
}=\theta^2(\hat{\eps}L+\mu\ln\frac{1}{\theta
}).\]
\end{itemize}
\begin{remark}\label{rem:singlelam}
We note that, given the single laminate construction in the martensite part, the extension to the austenite part given in \eqref{eq:singlelam} yields the optimal scaling for the elastic energy. Precisely,
\begin{eqnarray}\label{eq:singlelamper}
[u_{SL}(0,\cdot)]_{H^{1/2}_P{\com{(}}(0,1){\com{)}}}^2\sim\theta^2\ln\frac{1}{\theta}. 
\end{eqnarray}
The logarithmic correction  term is due to the periodicity assumption. In contrast, we have
\begin{eqnarray}\label{eq:singlelamn}
[u_{SL}(0,\cdot)]_{H^{1/2}_N{\com{(}}(0,1){\com{)}}}^2\sim [u_{U}(0,\cdot)]_{H^{1/2}_N{\com{(}}(0,1){\com{)}}}^2\sim\theta^2. 
\end{eqnarray}
\end{remark}
\begin{proof}
To prove \eqref{eq:singlelamper} it remains to show a lower bound on the seminorm. For that, we use the equivalent form \eqref{eq:H1/2P}, we identify $u_{SL}(0,\cdot)$ with its $1$-periodic extension and estimate
\begin{eqnarray*}
[u_{SL}(0,\cdot)]_{H^{1/2}_P{\com{(}}(0,1){\com{)}}}^2&\geq&\int_{7/8}^1\int_{1+\theta}^{{\com{1+}}\theta+1/8}\frac{|u_{SL}(0,z_1)-u_{SL}(0,z_2)|^2}{|z_1-z_2|^2}\dz\\
&\geq& \left(\frac{\theta}{4}\right)^2\int_{7/8}^1\int_{1+\theta}^{{\com{1+}}\theta+1/8}\frac{1}{|z_1-z_2|^2}\dz\gtrsim\theta^2\ln\frac{1}{\theta}.
\end{eqnarray*}
To show \eqref{eq:singlelamn}, we first extend $u_{SL}(0,\cdot)$ to $[0,\theta]\times[0,1]$ by 
\begin{eqnarray*}
u(x_1,x_2):=\begin{cases}
(-1+\theta)x_2-x_1&\text{\qquad if\quad}0\leq x_2\leq\theta-x_1,\\
\theta(x_2-1)&\text{\qquad if\quad}\theta-x_1\leq x_2\leq 1.
\end{cases}
\end{eqnarray*}
Then $u(\theta,x_2)=u_U(0,x_2)-\theta$, and consequently, since $\int_0^\theta\int_0^1|\nabla u(x)|^2\dx\lesssim\theta^2$, 
we have $[u_{SL}]_{H^{1/2}_P(0,1)}^2\lesssim [u_{U}]_{H^{1/2}_N(0,1)}^2+\theta^2$ and $[u_{U}]_{H^{1/2}_N(0,1)}^2\lesssim [u_{SL}]_{H^{1/2}_P(0,1)}^2+\theta^2${\com{, which concludes the proof since 
$[u_{U}]_{H^{1/2}_N(0,1)}^2\sim\theta^2$ as shown in (i) above.}}
\end{proof}
All remaining constructions are special cases of a two-scale branching construction, which relies on the following lemma. The latter is a straight-forward generalization of {\com{a}} construction {\com{from}} \cite[Lemma 2.3]{kohn-mueller:94}.
 \begin{lemma}
\label{lem:branch}
Suppose that $0<\theta\leq\frac{1}{2}$, and let $h$, $\eta$, $\ell>0$ be such that $\theta h\leq\eta\leq h$. Then there is a function $b:=b^{(h,\eta,\ell)}:(-\ell,0)\times(0,h)\to\R$ with the following properties:
\begin{itemize}
\item[(i)] $b(x_1,0)=0$, $b(x_1,h)=h\theta-\eta$,
\[b(-\ell,x_2)=\begin{cases}
\theta x_2&\text{\quad if\ }0\leq x_2\leq (h-\eta)/2,\\
(-1+\theta)x_2+(h-\eta)/2&\text{\quad if\ } (h-\eta)/2\leq x_2\leq(h+\eta)/2,\\
\theta x_2-\eta&\text{\quad if\ } (h+\eta)/2\leq x_2\leq h,\\
\end{cases} \]
and $b(-\ell,x_2)=N(h\theta-\eta)+b(-\ell,\tilde{x}_2)$ if $x_2=Nh+\tilde{x}_2$ with $N\in\N$ and $\tilde{x_2}\in[0,h)$,
\item[(ii)] $b(0,x_2)=\frac{1}{2}b(-\ell,2x_2)$ if $0\leq x_2\leq h/2$ and $\partial_2b(0,x_2)$ is $h/2$-periodic,
\item[(iii)] $\partial_2b\in\{\theta,-1+\theta\}$ almost everywhere,
\item[(iv)] $\|\partial_1b\|_{L^2((-\ell,0)\times(0,h))}^2\leq C\eta^2 (h-\eta)/\ell$, where $C$ does not depend on $h$, $\ell$, $\theta$ or $\eta$, 
\item[(v)] $\int_{(-\ell,0)\times(0,h)}|\partial_2\partial_2b|\leq 4\ell$
\item[(vi)] $\int_{(-\ell,0)\times(0,h)}|\partial_1\partial_2b|\leq 4\eta$.
\end{itemize}
\end{lemma}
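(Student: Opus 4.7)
My plan is to construct $b$ explicitly by prescribing, at each $x_1 \in (-\ell, 0)$, a ``minority set'' $M(x_1) \subset (0,h)$ on which $\partial_2 b(x_1,\cdot) = -1+\theta$ (with $\partial_2 b = \theta$ on the complement), and then setting
\[
 b(x_1, x_2) := \theta x_2 - \int_0^{x_2} \mathbf{1}_{M(x_1)}(y)\, dy
\]
for $x_2 \in (0, h)$, extended to $x_2 > h$ by the offset-periodicity specified in (i). This makes (iii) automatic, and (i)--(ii) become the requirement that $M(-\ell) = [(h-\eta)/2,(h+\eta)/2]$ (one coarse stripe of width $\eta$) and $M(0) = [(h-\eta)/4,(h+\eta)/4] \cup [(3h-\eta)/4,(3h+\eta)/4]$ (two fine stripes of width $\eta/2$ each). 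The interpolation of $M(x_1)$ between these boundary profiles follows \cite[Lemma 2.3]{kohn-mueller:94} when $\eta$ is moderate, and requires a new ingredient when $\eta$ is small.

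For $\eta \ge h/2$ I take $M(x_1)$ to consist of two stripes of constant width $\eta/2$ whose four edges are affine in $x_1$ and which coincide at $x_1 = -\ell$, merging into the single coarse stripe. For $\eta < h/2$ this naive construction moves each of the four interfaces vertically by $(h-\eta)/4$, producing $\int|\partial_1\partial_2 b| \sim h-\eta$, which exceeds $4\eta$ as soon as $\eta < h/5$. I therefore replace it by a three-stripe transient: on a window of $x_1$-length $\sim \ell$ the central stripe at $x_2 = h/2$ shrinks linearly from width $\eta$ to $0$, while two auxiliary stripes centered at $h/4$ and $3h/4$ grow linearly from width $0$ to $\eta/2$, with the total measure conserved by the relation $w_c(x_1) + 2 w_o(x_1) = \eta$. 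Disjointness of the three stripes holds throughout, since $w_c + w_o \le \eta \le h/2$.

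Given the explicit $M(x_1)$, the six properties are all verified by direct calculation. (i)--(iii) are built in. For (iv) I use $\partial_1 b(x_1,x_2) = -\partial_1 |M(x_1) \cap (0,x_2)|$, which is pointwise of size $\lesssim \eta/\ell$ and supported on a region of area $\lesssim \ell(h-\eta)$, giving $\|\partial_1 b\|_{L^2}^2 \lesssim \eta^2(h-\eta)/\ell$. For (v) and (vi) I exploit the projection identity $\int|\partial_i \partial_2 b| = \int_\gamma |\nu_i|\, d\mathcal{H}^1$, where $\gamma$ is the union of majority/minority interfaces and the jump $|(-1+\theta)-\theta|$ equals $1$: (v) reduces to summing $x_1$-extents of interfaces (at most $4\ell$ after bookkeeping), and (vi) reduces to summing their $x_2$-extents (bounded by $h-\eta \le 2\eta$ in the $\eta \ge h/2$ regime, and by $2\eta$ in the three-stripe regime). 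The main obstacle is precisely (vi) when $\eta \ll h$, and the three-stripe construction is designed expressly to keep the vertical interface motion proportional to $\eta$ rather than to $h-\eta$; the remaining verifications are routine bookkeeping on piecewise-affine pieces.
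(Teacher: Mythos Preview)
Your approach is correct and shares the essential idea with the paper: the key novelty is exactly your ``three-stripe transient'' (central minority stripe shrinking linearly while two outer stripes grow with total width conserved), which is what keeps the vertical interface motion proportional to $\eta$ and hence yields (vi). The paper gives precisely this disconnected construction, with the cosmetic difference that its outer stripes have one fixed edge at $(h\pm\eta)/4$ rather than growing symmetrically about $h/4$ and $3h/4$; both variants satisfy (i)--(vi).

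The one substantive difference is that your case split at $\eta=h/2$ is unnecessary. The paper uses the three-stripe construction uniformly for all $\theta h\le\eta\le h$, and in fact presents your $\eta\ge h/2$ construction (the constant-width translating stripes) as the alternative $\hat b$, noting that $\hat b$ satisfies (i)--(v) but fails (vi) in general. You are right that $\hat b$ does satisfy (vi) when $\eta\ge h/2$ (since then $h-\eta\le\eta$), so your split is valid---just more bookkeeping than needed. One small caveat: in the three-stripe regime six interfaces are present at generic $x_1$, so the natural constant in (v) is $6\ell$ rather than $4\ell$; this harmless discrepancy affects the paper's own statement as well and is irrelevant for every subsequent use of the lemma.
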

\begin{proof}
The function $b$ is sketched in Figure \ref{figlembranch} {\BZ{(right)}}. By (ii), it suffices to construct $b$ for $x_2\leq h/2$, namely
\begin{eqnarray*}
b(x_1,x_2):=\begin{cases}
\theta x_2&\text{\ if\ }0\leq x_2\leq \frac{h-\eta}4,\\
(\theta-1) x_2 +\frac{h-\eta}4&\text{\ if\ }\frac{h-\eta}{4}\leq x_2\leq \frac{h{\com{+}}\eta}{4}+\frac{\eta x_1}{2\ell},\\
\theta x_2-\frac{\eta x_1}{2\ell}{\com{-\frac{\eta}{2}}}&
\text{\ if\ }\frac{h{\com{+}}\eta}{4}+\frac{\eta x_1}{2\ell}\leq x_2\leq 
\frac{{\com{h}}}2+\frac{\eta x_1}{2\ell},\\
(\theta-1) x_2+\frac{h-\eta}{2}&\text{\ if\ }
\frac{{\com{h}}}2+\frac{\eta x_1}{2\ell}
\leq x_2\leq h/2.
\end{cases}
\end{eqnarray*}
and to evaluate all integrals. We remark that all conditions except the last one would also be satisfied by the construction with a connected minority phase. 
The last one is not important for the Kohn-Müller model, but will become relevant for the dislocation model.
\begin{eqnarray*}
\hat b(x_1,x_2):=\begin{cases}
\theta x_2&\text{\ if\ }0\leq x_2\leq \frac{h-\eta}4(1-\frac {x_1}\ell),\\
(\theta-1) x_2 +\frac{h-\eta}4(1-\frac {x_1}\ell)
&\text{\ if\ }\frac{h-\eta}4(1-\frac {x_1}\ell)\leq x_2\leq 
\frac{h-\eta}4(1-\frac {x_1}\ell)+\frac\eta2,\\
\theta x_2-\frac{\eta }{2}&
\text{\ if\ }
\frac{h-\eta}4(1-\frac {x_1}\ell)+\frac\eta2\leq x_2\leq \frac h2,
\end{cases}
\end{eqnarray*}
extended so that $\partial_2\hat b$ is $h/2$-periodic in $x_2$.
\end{proof}
\begin{figure}
\centerline{%
\includegraphics[height=4cm]{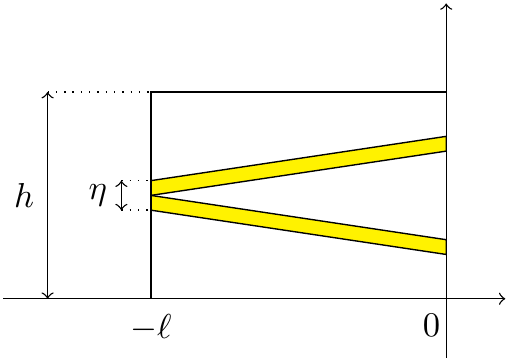}\hskip1cm
\includegraphics[height=4cm]{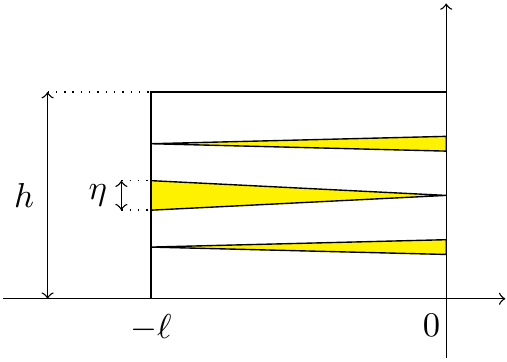}}
 \caption{Sketch of the construction of Lemma \ref{lem:branch}.
 Left: the connected pattern described by $\hat b$, which has a longer vertical component of the interface and does not 
 satisfy (vi). Right: the disconnected pattern
 of $b$, which fulfills all stated properties.
 }
 \label{figlembranch}
\end{figure}

%

\begin{figure}
\centerline{%
 \includegraphics[height=3.5cm]{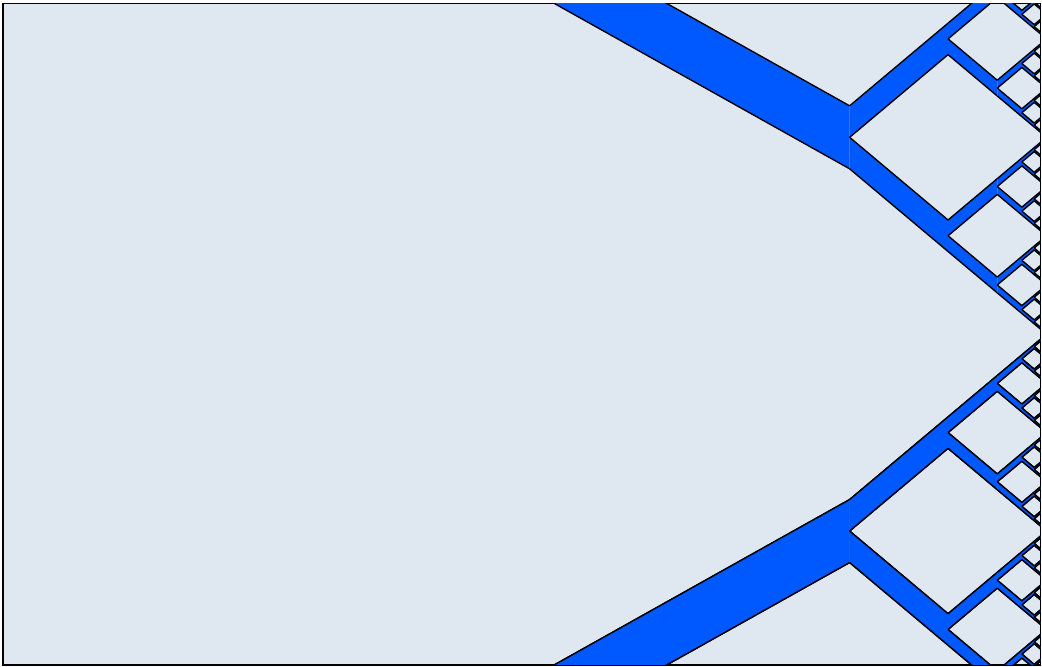}\hskip.8cm
 \includegraphics[height=3.5cm]{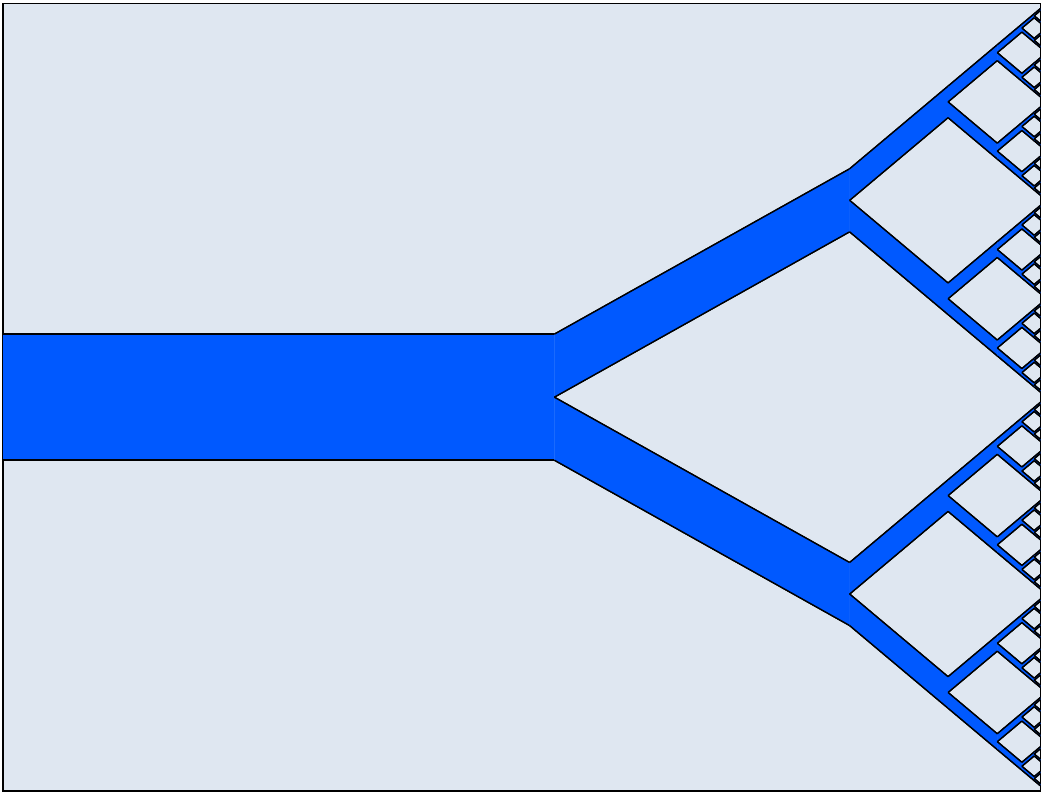}\hskip.8cm}
 \caption{Sketch of the truncated branching construction
 for Neumann boundary data (left) and periodic boundary data (right).}
 \label{figtruncb}
\end{figure}
\paragraph{Two-scale branching.}
We present a new construction which as special cases comprises test functions from the literature, in particular laminated structures, the single laminate and branched patterns (see below). 
 Let $N\in\N$, {$0<\ell\leq L$ and $\theta\leq h\leq 1$ be such that $\ell=L$ if $N>1$. We construct a deformation $u_{TSB}$ in three steps:\\}
 {\em{Step 1: Construction for $-\ell\leq x_1\leq 0$: }}
 We construct an admissible function $u_{TSB}$ that is periodic in $x_2$-direction with period $1/N$, and describe the  construction on ${\com{[}}-\ell,0{\com{]}}\times{\com{[}}0,1/N{\com{]}}$. 
We set
\[u_{TSB}(x_1,x_2):=\begin{cases}
\theta x_2&\text{\qquad if\quad}x_2\in[0,\frac{1-h}{2N}],\\
\theta (x_2-\frac{1}{N})&\text{\qquad if\quad}x_2\in[\frac{1+h}{2N},\frac{1}{N}],
\end{cases} \]
and construct a branching function on the rectangle $R_1:={\com{[}}-{\cm{\ell}},0]\times{\com{[}}\frac{1-h}{2N},\frac{1+h}{2N}]$ as follows: Subdivide $R_1$ into rectangles
\[R_{ij}:={\com{[}}-3^{-i}\ell,-3^{-(i+1)}\ell{\com{]}}\times{\com{[}}\frac{1-h}{2N}+\frac{jh}{2^iN},\frac{1-h}{2N}+\frac{(j+1)h}{2^iN}{\com{]}},\qquad i=0,1,\dots,\text{\quad and\quad}j=0,\dots,2^i-1, \]
and consider the bottom rectangles $R_{i0}$ first. On level $i=0,1,\dots$ choose $\ell_i:=\frac{2}{3}3^{-i}\ell$, $h_i:=\frac{h}{2^iN}$ and $\eta_i:=\frac{\theta}{2^iN}$ and note that the assumptions of Lemma \ref{lem:branch} are satisfied. We use the function $b^{(h_i,\eta_i,\ell_i)}$ from Lemma \ref{lem:branch} and set
\[u_{TSB}(x_1,x_2):=b^{(h_i,\eta_i,\ell_i)}(x_1+(3^{-i}\ell-\ell_i), x_2-\frac{1-h}{2N})+\frac{1-h}{2N}\theta. \]
The function $u_{TSB}$ is then extended to a continuous function on $\cup_{j}R_{ij}$ such that $\partial_2u_{TSB}$ is $h/(2^iN)$-periodic in $x_2$-direction. By Lemma \ref{lem:branch} this yields a continuous function $u_{TSB}$ on $(-\ell,0)\times(0,1)$ with $\partial_2u_{TSB}\in\{\theta,-1+\theta\}$ almost everywhere, and 
\begin{eqnarray}
\nonumber
\int_{-\ell}^0\int_0^1|\partial_1u_{TSB}|^2\dx+\eps|\partial_2\partial_2 u_{TSB}| 
&\lesssim&\sum_{i=0}^\infty \left\{\left(\frac{3}{4}\right)^i\frac{(h-\theta)}{\ell}\frac{\theta^2}{N^2}+\left(\frac{2}{3}\right)^i\eps \ell N\right\}
\lesssim\frac{(h-\theta)}{\ell}\frac{\theta^2}{N^2}+\eps \ell N.\\
\qquad \label{eq:TSB2}
\end{eqnarray}
{\em Step 2: Construction for $x_1\geq 0$:}
For $0\leq x_1\leq\frac{1-h}{2N}$ and $0\leq x_2\leq\frac{1}{N}$, we set $u_{TSB}(x_1,x_2):=\frac{1}{N}u(Nx_1,Nx_2)$ with
\[u(x_1,x_2):=\begin{cases}
\theta x_2&\text{\qquad if\quad}0\leq x_2\leq -x_1+\frac{1-h}{2},\\
\theta(1-\frac{1}{2x_1+h})(x_2-\frac{1}{2})&\text{\qquad if\quad}-x_1+\frac{1-h}{2}\leq x_2\leq x_1+\frac{1+h}{2},\\
\theta(x_2-1)&\text{\qquad if\quad}x_1+\frac{1+h}{2}\leq x_2\leq 1,
\end{cases} \]
set $u_{TSB}(x_1,x_2):=0$ if $x_1\geq\frac{1-h}{2N}$, and extend it periodically in $x_2$-direction with period $\frac{1}{N}$. Note that for $h=1$, we have $u(x_1,x_2)=0$ if $x_1\geq 0$. We obtain\begin{eqnarray}\label{eq:TSB3}
\mu\int_0^\infty\int_0^1|\nabla u_{TSB}(x)|^2\dx\lesssim\mu\frac{\theta^2}{N}\ln\frac{1}{h}. 
\end{eqnarray}
Summarizing, if $\ell=L$, we have constructed a function $u_{TSB}\in\mathcal{A}_D\subset\mathcal{A}_N$ such that (see \eqref{eq:TSB2} and \eqref{eq:TSB3})
\begin{eqnarray}\label{eqnenergytsb}
J(u_{TSB})\lesssim \theta^2\big(\frac{(h-\theta)}{L}\frac{1}{N^2}+\hat{\eps} LN+\frac{\mu}{N}\ln\frac{1}{h}\big). 
\end{eqnarray}
We remark that the term proportional to $\mu$ disappears for $h=1$.

{\em Step 3: Construction for $-L\leq x_1\leq\ell$:} It remains to consider the case $\ell<L$. Recall that we allow $\ell\leq L$ only if $N=1$ (this will be needed in case of Neumann boundary conditions below). We distinguish between Dirichlet and Neumann boundary conditions. To construct a periodic function $u_{TSB}^{(P)}\in\mathcal{A}_P$, we extend $u_{TSB}$ constantly in $x_1$, i.e.,
\[u_{TSB}^{(P)}(x_1,x_2):=u_{TSB}(-\ell,x_2)\text{\qquad for all\quad}-L\leq x_1\leq-\ell. \]
We obtain an additional energy contribution
\begin{eqnarray}
\label{eq:TSB4D}
\eps\int_{(-L,-\ell)\times(0,1)}|\partial_2\partial_2u_{TSB}^{(P)}|\lesssim(L-\ell)\eps N.
\end{eqnarray}
In case of Neumann boundary conditions, we introduce an interpolation layer analogously to the truncated branching construction from \cite[Theorem 1]{zwicknagl:14}.  Precisely, we set  (if $L\leq 2\ell$, we use the restriction of the function)
\[u_{TSB}^{(N)}(x_1,x_2):=\begin{cases}
(-1+\theta)x_2&\text{\ if\ }0\leq x_2\leq\frac{\theta}{2\ell}x_1+\theta,\\
\theta x_2-\frac{\theta}{2\ell}x_1-\theta&\text{\ if\ }\frac{\theta}{2\ell}x_1+\theta\leq x_2\leq-\frac{\theta}{2\ell}x_1+1-\theta,\\
(-1+\theta)x_2-\frac{\theta}{\ell}x_1+1-2\theta&\text{\ if\ }-\frac{\theta}{2\ell}x_1+1-\theta\leq x_2\leq 1,
\end{cases} \]
and extend $u_{TSB}^{(N)}$ constantly in $x_1$ for $x_1\leq -2\ell$, i.e., 
\[u_{TSB}^{(N)}(x_1,x_2):=u_{TSB}^{(N)}(-2\ell,x_2)=\theta x_2\text{\qquad if \quad}-L\leq x_1\leq -2\ell. \]
Therefore, the additional energy contribution is estimated above by
\begin{eqnarray}
\label{eq:TSB4N}
\int_{(-L,-\ell)\times(0,1)}|\partial_1u_{TSB}^{(N)}(x)|^2\dx+\eps|\partial_2\partial_2u_{TSB}^{(N)}|\lesssim\theta^2(\frac{1}{\ell}+\hat{\eps}\ell).
\end{eqnarray}
Summarizing, if $\ell<L$ and $N=1$, we have constructed functions $u_{TSB}^{(P/N)}$ with (see \eqref{eq:TSB2}, \eqref{eq:TSB3} and \eqref{eq:TSB4D} respectively \eqref{eq:TSB4N})
\begin{eqnarray}
\label{eq:TSBsumD}
J(u_{TSB}^{(P)})\lesssim\theta^2\big(\frac{(h-\theta)}{\ell}+\hat{\eps}L+\mu\ln\frac{1}{h}\big),
\end{eqnarray}
and 
\begin{eqnarray}
J(u_{TSB}^{(N)})\lesssim\theta^2\big(\frac{(h-\theta)}{\ell}+\mu\ln\frac{1}{h}+\hat{\eps}\ell+\frac{1}{\ell}\big).
\label{eq:TSBsumN}
\end{eqnarray}
As above, the term proportional to $\mu$ disappears for $h=1$.
We note for later reference that optimizing the upper bounds for the total energies (see \eqref{eqnenergytsb}, \eqref{eq:TSBsumD} and \eqref{eq:TSBsumN}) in $h$ subject to the constraint $\theta\leq h\leq 1$ yields 
\begin{eqnarray}\label{eq:hopt}
h=\min\{1,\max\{\theta, \mu LN\}\}.
\end{eqnarray}
\begin{remark}
\label{rem:trunc}
In order to reuse this construction in the upper bound for the dislocations, we will need a slight modification of Step 1, similarly to constructions from \cite{schreiber:94,conti-ortiz:05,zwicknagl:14,chan-conti:14-1,Kohn-Wirth:14-1}, i.e., we stop at some finite level $I\in\N$ such that the slopes of the interfaces remain bounded by a constant, i.e., $\eta_I/\ell_I\sim 1$, which is equivalent to
\begin{eqnarray}
\left(\frac{3}{2}\right)^I\sim\frac{N\ell}{\theta}.
\label{eq:abbruch}
\end{eqnarray}
We note that if such an $I\in\N$ exists, then 
\begin{eqnarray}
\label{eq:surface}
\eps\int_{-\ell}^{3^{-(I+1)}\ell}|D\partial_2 u_{TSB}|\lesssim\eps\int_{-\ell}^{3^{-(I+1)}\ell}|\partial_2\partial_2 u_{TSB}|\lesssim\eps\ell N.
\end{eqnarray}
%

\end{remark}

We are now in the position to prove the upper bound of Theorem \ref{th:1}. We point out that $\ln^{1/2}\frac{1}{\theta}\sim\ln^{1/2}\frac{1}{\theta^2}$, and thus, we may replace $\ln^{1/2}\frac{1}{\theta}$ by $\ln^{1/2}\frac{1}{\theta^2}$ in the scaling law.
\begin{proposition}\label{prop:KMub}
There are constants $c_N$, $c_P>0$ such that or all $\eps$, $\mu$, $L>0$ and all $\theta{\com{\in(0,\frac{1}{2}]}}$, we have with $\hat{\eps}:=\eps/\theta^2$,
\begin{eqnarray}\label{eq:scaling1ub}
\min_{u\in\mathcal{A}_N} J(u)\leq c_N \theta^2\min\left\{\mu,\,\hat{\eps}^{1/2},\,\hat{\eps}^{2/3}L^{1/3},\,\,\left(\hat{\eps} L\mu\right)^{1/2}\ln^{1/2}\left(\frac{1}{\theta^2}\right),\, \,(\hat{\eps} L\mu)^{1/2}\ln^{1/2}\left(3+\frac{\hat{\eps}}{\mu^3 L}\right)\right\}
\end{eqnarray}
and 
\begin{eqnarray}\label{eq:scaling2ub}
\min_{u\in\mathcal{A}_P} J(u)\leq c_P\theta^2 \max\left\{\hat{\eps} L,\ \min\left\{\hat{\eps}^{2/3}L^{1/3},\, \left(\hat{\eps} L\mu\right)^{1/2}\ln^{1/2}\left(\frac{1}{\theta^2}\right),\, (\hat{\eps} L\mu)^{1/2}\ln^{1/2}\left(3+\frac{\hat{\eps}}{\mu^3 L}\right)\right\}\right\}.
\end{eqnarray}
\end{proposition}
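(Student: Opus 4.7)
The plan is to exhibit, for each entry of the minimum in \eqref{eq:scaling1ub} and of the inner minimum in \eqref{eq:scaling2ub}, an admissible competitor whose energy is bounded by that entry up to a universal constant. Taking the minimum over competitors then yields the upper bounds. All the competitors are instances of the three constructions already introduced: the uniform configuration $u_U$, the single laminate $u_{SL}$, and the two-scale branching $u_{TSB}$ together with its Neumann-interpolated variant $u_{TSB}^{(N)}$. The estimates \eqref{eq:TSB2}, \eqref{eq:TSB3}, \eqref{eq:TSBsumD} and \eqref{eq:TSBsumN} show that, up to the parameters $N\in\mathbb{N}$, $\theta\le h\le 1$, and $0<\ell\le L$ (with $\ell = L$ whenever $N>1$), the energy of $u_{TSB}$ is controlled by a sum of the $(h-\theta)$-term, the surface term $\hat\eps L N$, and the elastic term $(\mu/N)\ln(1/h)$, with the extra interpolation terms $\hat\eps\ell + 1/\ell$ in the Neumann case $\ell<L$.

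The bound $\theta^2\mu$ (Neumann only) is immediate from $J(u_U) = \tfrac{2}{3}\mu\theta^2$. The bound $\theta^2\hat\eps^{1/2}$ (Neumann only) uses $u_{TSB}^{(N)}$ with $N=1$, $h=1$, and $\ell = \min\{\hat\eps^{-1/2},L\}$, so that \eqref{eq:TSBsumN} reduces to $\theta^2(\hat\eps\ell + 1/\ell)$ and is minimised at the first choice whenever $\hat\eps^{-1/2}\le L$; otherwise the pure-branching bound already dominates. The branching bound $\theta^2\hat\eps^{2/3}L^{1/3}$ uses $u_{TSB}$ with $\ell=L$, $h=1$ (so the elastic logarithm vanishes) and $N\sim (\hat\eps L^2)^{-1/3}\vee 1$. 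For $\theta^2(\hat\eps L\mu)^{1/2}\ln^{1/2}(1/\theta^2)$ we take $u_{TSB}$ with $\ell=L$, $h=\theta$ (killing the $(h-\theta)$-term in \eqref{eqnenergytsb}) and $N\sim\max\{1,(\mu\ln(1/\theta)/(\hat\eps L))^{1/2}\}$, so that $\hat\eps L N$ and $(\mu/N)\ln(1/\theta)$ balance by AM--GM. For the new intermediate bound $\theta^2(\hat\eps L\mu)^{1/2}\ln^{1/2}(3+\hat\eps/(\mu^3 L))$ we use two-scale branching with $\ell=L$ and $h = \mu LN$ as prescribed by \eqref{eq:hopt}, so that \eqref{eqnenergytsb} collapses to $\theta^2(\mu/N + \hat\eps LN + (\mu/N)\ln(1/(\mu LN)))$; balancing $\hat\eps LN$ against $(\mu/N)\ln(1/(\mu LN))$ gives $N^2\sim(\mu/(\hat\eps L))\ln(\hat\eps/(\mu^3 L))$ and an energy of the claimed order. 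In the periodic case, the $\hat\eps L$ summand in the outer $\max$ of \eqref{eq:scaling2ub} is absorbed automatically, since every such competitor satisfies $J\gtrsim\theta^2\hat\eps LN\ge\theta^2\hat\eps L$.

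The main obstacle is the regime-by-regime bookkeeping required to verify that the chosen $(N,\ell,h)$ lies in the admissible box $N\in\mathbb{N}$, $\theta\le h\le 1$, $\ell\in(0,L]$, and, when the formal optimum falls outside this box, to check that the binding term of the minimum changes to one whose construction is still valid. For the intermediate regime this amounts to checking that $\mu LN_{\mathrm{opt}}\in[\theta,1]$: if $\mu LN_{\mathrm{opt}}>1$ one substitutes $h=1$ and $\hat\eps^{2/3}L^{1/3}$ becomes binding, while if $\mu LN_{\mathrm{opt}}<\theta$ one substitutes $h=\theta$ and the $\ln^{1/2}(1/\theta^2)$-variant becomes binding. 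Analogous checks are needed for the Neumann $\hat\eps^{1/2}$ bound (that $\hat\eps^{-1/2}\le L$) and the pure-branching bound (that $N_{\mathrm{opt}}\ge 1$), with $u_U$ or a trivial competitor taking over in the exceptional regimes. These case distinctions are routine; the only slightly subtle point is to unwind the factor $\ln(1/(\mu LN_{\mathrm{opt}}))$ arising in the intermediate bound into the form $\ln(3+\hat\eps/(\mu^3 L))$, using that $\ln$ is slowly varying and that the ``$3+$'' regularisation absorbs the boundary $\mu LN_{\mathrm{opt}}\to 1$.
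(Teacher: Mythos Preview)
Your treatment of the Neumann bound \eqref{eq:scaling1ub} is essentially the same as the paper's: you use exactly the five test functions $u_U$, the truncated branching $u_{TSB}^{(N)}$ with $\ell=\hat\eps^{-1/2}$, the branching, the laminate, and the two-scale branching with $h=\mu LN$, and you correctly identify the admissibility checks (that $h\in[\theta,1]$ reduces to the neighbouring regimes $A$ and $B$ being binding).

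For the periodic bound \eqref{eq:scaling2ub} there is a genuine gap. Your one-line justification, ``every such competitor satisfies $J\gtrsim\theta^2\hat\eps LN\ge\theta^2\hat\eps L$'', is a \emph{lower} bound on the competitor's energy and says nothing about the upper bound you are trying to prove. What you actually need is that for each $X\in\{A,B,C\}$ there is a periodic competitor with energy $\lesssim\theta^2\max\{\hat\eps L,X\}$; by the lattice identity $\max\{a,\min\{b,c,d\}\}=\min_{X}\max\{a,X\}$ this gives \eqref{eq:scaling2ub}. When the formal optimum satisfies $N_{\mathrm{opt}}<1$ (equivalently $X<\hat\eps L$) you must take $N=1$ and then \emph{verify} that the remaining terms in \eqref{eqnenergytsb} are also $\lesssim\theta^2\hat\eps L$. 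For branching ($h=1$, $N=1$) this needs $1/L\lesssim\hat\eps L$, which is exactly $A<\hat\eps L$; for the laminate ($h=\theta$, $N=1$) this needs $\mu\ln(1/\theta)\lesssim\hat\eps L$, which is exactly $B<\hat\eps L$; for the intermediate construction ($h=\mu L$, $N=1$) one needs $\mu\ln(1/(\mu L))\lesssim\hat\eps L$, which follows from $C<\hat\eps L$ together with $\mu^2\lesssim\hat\eps$ (and the complementary case $\mu^2>\hat\eps$ forces $\mu L>1$, hence $h$ clips to $1$ and one is back in the branching sub-case, which is then seen to be incompatible with $A\ge\hat\eps L$). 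None of this is in your write-up. The paper avoids this bookkeeping by producing, in each sub-regime $X<\hat\eps L$, an explicit periodic competitor with energy $\lesssim\theta^2\hat\eps L$: the periodic truncated branching $u_{TB}^{(P)}$ with $\ell=\hat\eps^{-1/2}$, the single laminate $u_{SL}$, and a TSB with $N=1$ and $\ell=\mu\ln(3+\hat\eps/(\mu^3L))/\hat\eps<L$, respectively.
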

\begin{proof}
We prove \eqref{eq:scaling1ub} first. We use mostly variants of the two-scale-branching construction (TSB) and provide appropriate choices for the parameters $\ell$, $h$ and $N$ in the various regimes.
\begin{itemize}
\item[(i)] The uniform function $u_U$ 
is always admissible, and thus, $\min_{u\in\mathcal{A}_N}J(u)\leq c_1\theta^2\mu$.
\item[(ii)] For any given parameters we obtain an admissible function as restriction to $(-L,\infty)\times(0,1)$ of the TSB with $N=1$, $h=1$ and $\ell=\hat{\eps}^{-1/2}$. This yields the truncated branching construction from \cite[Theorem 1]{zwicknagl:14}, which is sketched in Figure \ref{figtruncb} (left). Thus (see also \eqref{eq:TSBsumN}), we have $\min_{u\in\mathcal{A}_N}J(u)\leq c_1\theta^2\hat{\eps}^{1/2}$.
\item[(iii)] In the regime in which the minimum in \eqref{eq:scaling1ub} is attained by $\hat{\eps}^{2/3}L^{1/3}$, we in particular have $\hat{\eps}^{2/3}L^{1/3}\leq\hat{\eps}^{1/2}$. Thus, we may choose $N\sim(\hat{\eps}L^2)^{-1/3}$, $h=1$ and $\ell=L$ in TSB. This is the variant of the Kohn-M\"uller branching constuction \cite[Lemma 2.3]{kohn-mueller:94} for unequal volume fractions as given in \cite{diermeier:10,zwicknagl:14}. This yields a function $u_B$ such that (see \eqref{eqnenergytsb}) $J(u_B)\lesssim \theta^2\hat{\eps}^{2/3}L^{1/3}$. 
The deformation is sketched in Figure~\ref{figkmub}(b).
\item[(iv)] If the minimum in \eqref{eq:scaling1ub} is attained by $\left(\hat{\eps} L\mu\right)^{1/2}\ln^{1/2}\left(\frac{1}{\theta^2}\right)$, then in particular $\left(\hat{\eps} L\mu\right)^{1/2}\ln^{1/2}\left(\frac{1}{\theta^2}\right)\leq\mu$, and hence, we may choose $N\sim\frac{\mu^{1/2}\ln^{1/2}\frac{1}{\theta^2}}{\hat{\eps}^{1/2}L^{1/2}}$, $\ell=L$ and $h=\theta$. This yields the classical laminate construction $u_{Lam}$, for which $J(u_{Lam})\lesssim\theta^2(\hat{\eps}L\mu)^{1/2}\ln^{1/2}\frac{1}{\theta^2}$, where we used that $\ln^{1/2}\frac{1}{\theta}\sim\ln^{1/2}\frac{1}{\theta^2}$. The function is sketched in Figure \ref{figkmub}(a).
\item[(v)] Finally, in the regime in which the minimum in \eqref{eq:scaling1ub} is attained by $(\hat{\eps} L\mu)^{1/2}\ln^{1/2}\left(3+\frac{\hat{\eps}}{\mu^3 L}\right)$, we use TSB with $N\sim\frac{\mu^{1/2}\ln^{1/2}\left(3+\frac{\hat{\eps}}{\mu^3 L}\right)}{\hat{\eps}^{1/2}L^{1/2}}$, $\ell=L$ and $h=\frac{L^{1/2}\mu^{3/2}\ln^{1/2}\left(3+\frac{\hat{\eps}}{\mu^3 L}\right)}{\hat{\eps}^{1/2}}$. Note that $h\sim N\mu L$ as motivated by \eqref{eq:hopt}. This choice is indeed admissible: First, $N\gtrsim 1$ follows from $(\hat{\eps} L\mu)^{1/2}\ln^{1/2}\left(3+\frac{\hat{\eps}}{\mu^3 L}\right)\leq \mu$. Second, $\theta\leq h$ since $(\hat{\eps} L\mu)^{1/2}\ln^{1/2}\left(3+\frac{\hat{\eps}}{\mu^3 L}\right)\leq (\hat{\eps} L\mu)^{1/2}\ln^{1/2}\frac{1}{\theta^2}$ implies that $h\geq\frac{\mu^{3/2}L^{1/2}}{\hat{\eps}^{1/2}}\geq \theta$. Finally, $h\leq 1$ since $(\hat{\eps} L\mu)^{1/2}\ln^{1/2}\left(3+\frac{\hat{\eps}}{\mu^3 L}\right)\leq \hat{\eps}^{2/3}L^{1/3}$ implies that $h\leq h\ln\left(3+\frac{\hat{\eps}}{
\mu^3 L}\right)\leq 1$. Thus, we obtain a function $u_{TSB}$ for which (see \eqref{eqnenergytsb})
\begin{eqnarray*}
J(u_{TSB})&\lesssim&\theta^2(\hat{\eps}L\mu)^{1/2}\left(\ln^{-1/2}\left(3+\frac{\hat{\eps}}{\mu^3 L}\right)+\ln^{1/2}\left(3+\frac{\hat{\eps}}{\mu^3 L}\right)+\frac{\ln\left(\frac{\hat{\eps}}{\mu^3 L\ln\left(3+\frac{\hat{\eps}}{\mu^3 L}\right)}\right)}{\ln^{1/2}\left(3+\frac{\hat{\eps}}{\mu^3 L}\right)}\right)\\
&\lesssim&\theta^2(\hat{\eps} L\mu)^{1/2}\ln^{1/2}\left(3+\frac{\hat{\eps}}{\mu^3 L}\right).
\end{eqnarray*}
This function is sketched in Figure \ref{figkmub}(c).
 \end{itemize}
We proceed similarly to prove \eqref{eq:scaling2ub}.
\begin{itemize}
\item[(i)] Suppose that the minimum in \eqref{eq:scaling2ub} is attained by $\hat{\eps}^{2/3}L^{1/3}$. There are two possibilities. If $\hat{\eps}^{2/3}L^{1/3}\geq\hat{\eps}L$, then the branching construction $u_B$ is admissible (see (iii) above). Otherwise, we have $\hat\eps^{1/2}\leq\hat{\eps}L$. Then we choose $N=1$, $h=1$ and $\ell=\hat{\eps}^{-1/2}$  in TSB. This yields a periodic variant $u_{TB}^{(P)}$ of the truncated branching construction  from \cite[Theorem 1]{zwicknagl:14} with (see \eqref{eq:TSBsumD}) $J(u_{TB}^{(P)})\lesssim\theta^2(\hat{\eps}^{1/2}+L\hat{\eps})\lesssim\theta^2\hat{\eps}L$. This function is sketched in Figure \ref{figtruncb} (right).
%
\item[(ii)] Suppose that the minimum in \eqref{eq:scaling2ub} is attained by $\left(\hat{\eps} L\mu\right)^{1/2}\ln^{1/2}(\frac{1}{\theta^2})$. Again, there are two possibilities: If $\left(\hat{\eps} L\mu\right)^{1/2}\ln^{1/2}\left(\frac{1}{\theta^2}\right)\geq\hat{\eps}L$, then the laminate construction $u_{Lam}$ (see (iv) above) is admissible, for which $J(u_{Lam})\lesssim\left(\hat{\eps} L\mu\right)^{1/2}\ln^{1/2}(\frac{1}{\theta^2})$ . Otherwise, we use the single laminate $u_{SL}$ (which corresponds to the choice $N=1$, $h=\theta$ and $\ell=L$ in TSB), for which  $J(u_{SL})\lesssim\theta^2(\hat{\eps} L+\mu\ln\frac{1}{\theta^2})\lesssim\theta^2\hat{\eps} L$. 
\item[(iii)] Finally, suppose that the minimum in \eqref{eq:scaling2ub} is attained by $(\hat{\eps} L\mu)^{1/2}\ln^{1/2}(3+\frac{\hat{\eps}}{\mu^3 L})$, and consider again the two possibilities separately. If $\left(\hat{\eps} L\mu\right)^{1/2}\ln^{1/2}(3+\frac{\hat{\eps}}{\mu^3 L})\geq \hat{\eps}L$, we use TSB as in (v) above, i.e., with $N\sim\frac{\mu^{1/2}\ln^{1/2}\left(3+\frac{\hat{\eps}}{\mu^3 L}\right)}{\hat{\eps}^{1/2}L^{1/2}}$, $\ell=L$ and $h=\frac{L^{1/2}\mu^{3/2}\ln^{1/2}\left(3+\frac{\hat{\eps}}{\mu^3 L}\right)}{\hat{\eps}^{1/2}}$, which yields a function $u_{TSB}$ with $J(u_{TSB})\lesssim(\hat{\eps} L\mu)^{1/2}\ln^{1/2}\left(3+\frac{\hat{\eps}}{\mu^3 L}\right)$. Note that admissibility of $h$ follows verbatim as in the case of Neumann boundary conditions. Otherwise, we have $\left(\hat{\eps} L\mu\right)^{1/2}\ln^{1/2}(3+\frac{\hat{\eps}}{\mu^3 L})\leq \hat{\eps}L$, and we use TSB with $N=1$, $\ell=\frac{\mu\ln(3+\frac{\hat{\eps}}{\mu^3 L})}{\hat{\eps}}$ and $h$ as before. 
Note that $\ell$ is chosen to make $N=1$. In the regime under consideration, we indeed have $\ell\leq L$. Then by \eqref{eq:TSBsumD}, this yields a function $u_{TSB}^{(P)}$ with $J(u_{TSB}^{(P)})\lesssim\theta^2((\hat{\eps}L\mu)^{1/2}+\hat{\eps}L+\mu\ln\frac{\hat{\eps}}{\mu^3 L})\lesssim\theta^2\hat{\eps}L$.
\end{itemize}
We remark that in the case $\mathcal{A}_P$ all functions obey $u(x_1,0)=0$ for all $x_1$.
\end{proof}

\subsection{Lower bound}
  We now turn to the proof of the lower bounds of Theorem \ref{th:1}. {\BZ{Our main new contribution is the proof in the case $\theta\ll 1$, in particular Step 2 of the proof of Lemma \ref{lemma1} below. }} We proceed in two steps. 

\begin{lemma}\label{lemma1}
There exist $m_0\in(0,1/2]$ and $c>0$ with the following properties: For all $\mu,\eps>0$, 
$\theta\in (0,1/2]$, $\lambda\in (0,1]$, $\ell\in (0,L]$, $m\in(0,m_0]$ such that
\begin{equation}\label{lemma1cases}
\text{ at least one of }\hskip1cm \theta\le m\le m_0 \hskip1cm\text{ or }\hskip1cm   m_0=m\le\theta 
\end{equation}
holds,  one has with $\hat{\eps}=\eps/\theta^2$
\begin{equation}\label{eqlemma1}
J(u)\ge c\theta^2 \min\left\{ \frac{\hat{\eps} \ell
}{\lambda},\ \mu,\ \mu\lambda \ln \frac1 m ,
   \frac{ \lambda^2m}{\ell
}(\ln \frac1 m)^2 \right\}\,\text{\qquad for all\ }u\in\mathcal{A}_N,
\end{equation}
and 
\begin{equation}\label{eqlemma1.1}
J(u)\geq c\theta^2 \min\left\{\frac{\hat{\eps} \ell
}{\lambda},\ \mu\lambda \ln\frac{1}{ m},\,\frac{\lambda^2 m}{\ell}(\ln\frac{1}{ m})^2 \right\}\text{\qquad for all \ }u\in\mathcal{A}_P.
\end{equation}
\end{lemma}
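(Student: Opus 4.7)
My plan is to argue by contradiction, assuming $J(u)$ is strictly smaller than $c\theta^2$ times each of the terms on the right of \eqref{eqlemma1} (resp.\ \eqref{eqlemma1.1}), and deriving a contradiction. The difference between the Neumann and periodic bounds is the $\theta^2\mu$ alternative, which cannot appear for $\mathcal{A}_P$ because the periodicity $u(\cdot,0)=u(\cdot,1)$ excludes the uniform configuration $u=\theta x_2$. I will treat the case $\theta\sim 1/2$, in which \eqref{lemma1cases} forces $m=m_0$, as essentially classical; the novelty lies in the regime $\theta\ll 1$, where $m$ varies freely in $[\theta,m_0]$ and the two scales present in the optimal two-scale branching construction must be detected from the energy.

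The first step will be a slicing argument. From the surface-energy assumption $J(u)<c\theta^2\hat{\eps}\ell/\lambda=c\eps\ell/\lambda$ combined with $\eps\int_{-L}^0\int_0^1|\partial_2\partial_2u|\le J(u)$, a Chebyshev/Fubini argument produces a slice $t^\ast\in(-\ell,-\ell/2)$ on which $v(x_2):=\partial_2u(t^\ast,x_2)\in\{\theta,-1+\theta\}$ has $\lesssim 1/\lambda$ jumps; hence $v$ is constant on a positive fraction of the intervals of a $\lambda$-partition of $(0,1)$. Using the martensite bound $\int_{-\ell}^0\int_0^1(\partial_1 u)^2\le J(u)$ and Cauchy--Schwarz, I next obtain
\begin{equation*}
\|u(0,\cdot)-u(t^\ast,\cdot)\|_{L^2((0,1))}^2\le\ell J(u),
\end{equation*}
which transfers information on $v$ into information on the trace $u(0,\cdot)$. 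Combined with the austenite bound $\mu[u(0,\cdot)]_{H^{1/2}}^2\le J(u)$, any lower bound on $[u(0,\cdot)]_{H^{1/2}}^2$ becomes a lower bound on $J(u)/\mu$.

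Two subcases then remain. If $v\equiv\theta$ on most of $(0,1)$, the trace $u(t^\ast,\cdot)$ has a nontrivial linear slope and \eqref{eq:singlelamn} forces $[u(t^\ast,\cdot)]_{H^{1/2}_N}^2\gtrsim\theta^2$, contradicting the second assumption in $\mathcal{A}_N$ (this situation is impossible in $\mathcal{A}_P$, which accounts for the absence of the $\theta^2\mu$ term in \eqref{eqlemma1.1}). Otherwise $v$ has many minority pockets of widths in $[\theta\lambda,\lambda]$, and a direct estimation of \eqref{eq:H1/2N} or the Fourier representation \eqref{eq:H1/2Fourier} along the lines of \eqref{eq:singlelamper} yields the single-laminate-type bound
\begin{equation*}
[u(t^\ast,\cdot)]_{H^{1/2}}^2\gtrsim\theta^2\lambda\ln(1/m),
\end{equation*}
contradicting the third assumption. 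The fourth assumption is then used to guarantee that the $L^2$-error in the passage from the interior slice $t^\ast$ back to the boundary $x_1=0$ does not wash out the pocket structure; this transfer uses a logarithmic interpolation in the spirit of Lemma \ref{lemmah12}.

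The hard part will be the quantitative version of this last transfer: the factor $(\ln 1/m)^2$ must emerge as the product of one logarithm from the $H^{1/2}$-enhancement of \eqref{eq:singlelamper} and a second logarithm from the interpolation needed to carry the pocket structure from $t^\ast$ to $x_1=0$. This is the technical core of the small-$\theta$ regime highlighted in the authors' remark that the main new contribution is Step~2 of the proof. The case distinction \eqref{lemma1cases} is needed to handle the degeneracy of the logarithm as $m\to 1$ by freezing $m=m_0$ when $\theta$ is not small, and to allow $m$ to scale all the way down to $\theta$ when $\theta\ll 1$.
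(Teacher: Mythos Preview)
Your slicing and localization outline is essentially the same as the paper's Step~1: pick a good vertical slice $x_1^*\in(-\ell,0)$ on which the number of interfaces is $\lesssim 1/\lambda$ (else the surface term gives $\eps\ell/\lambda$), and use the martensite elastic term to compare $u(x_1^*,\cdot)$ with the trace $u(0,\cdot)$ in $L^2$. Your handling of the uniform case and of the $\mathcal{A}_N$ versus $\mathcal{A}_P$ distinction is also correct in spirit.

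The gap is in the core step. You propose to first bound $[u(t^*,\cdot)]_{H^{1/2}}^2$ from below by $\theta^2\lambda\ln(1/m)$ using the pocket structure, and then to transfer this to $[u(0,\cdot)]_{H^{1/2}}$ via the $L^2$ bound $\|u(0,\cdot)-u(t^*,\cdot)\|_{L^2}^2\le \ell J(u)$. This does not work as stated: the $H^{1/2}$ seminorm is not lower semicontinuous under $L^2$ perturbations, so an $L^2$-small error can completely destroy an $H^{1/2}$ lower bound. The paper avoids this by never estimating $[u(t^*,\cdot)]_{H^{1/2}}$ at all. Instead, it pairs $\partial_2 v(x_1^*,\cdot)$ against a \emph{fixed} test function $\psi$, integrates by parts to obtain three terms involving $v(x_1^*,\cdot)-v(0,\cdot)$, $v(0,\cdot)-\theta x_2$, and $\theta$, and bounds the first with $\|\psi'\|_{L^2}\cdot\ell^{1/2}F^{1/2}$ and the second with $[\psi]_{H^{1/2}}\cdot \mu^{-1/2}F^{1/2}$ via Lemma~\ref{lemmah12}. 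Because both bounds are on the \emph{same linear functional} $\langle\,\cdot\,,\psi'\rangle$, the $L^2$ transfer and the $H^{1/2}$ boundary control combine additively rather than requiring any stability of the $H^{1/2}$ norm itself.

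Relatedly, your account of how the parameter $m$ and the factor $(\ln 1/m)^2$ enter is off. In the paper, $m$ is not read off from the minimizer; it is a free parameter that determines the \emph{shape} of $\psi$: one takes logarithmic bumps $\psi_i(t)=[\ln\frac{1}{6m}-(\ln\frac{|t|}{g_i})_+]_+$ of inner radius $g_i\gtrsim\lambda m$ and outer radius $g_i/(6m)$, so that $\psi=\ln\frac{1}{6m}$ on the pockets $\omega$. The left-hand side of the duality inequality is then $|\omega|\ln\frac{1}{6m}\sim\theta\ln\frac{1}{m}$, and comparing with the $\|\psi'\|_{L^2}\sim (m^{1/2}\lambda)^{-1}$ term gives, after squaring, the $\lambda^2 m(\ln\frac{1}{m})^2/\ell$ alternative; comparing with $[\psi]_{H^{1/2}}\sim(\lambda^{-1}\ln\frac{1}{m})^{1/2}$ gives the $\mu\lambda\ln\frac{1}{m}$ alternative. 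Both logarithms in $(\ln\frac{1}{m})^2$ come from squaring the single $\ln\frac{1}{m}$ height of $\psi$, not from one log in $H^{1/2}$ and a second from interpolation. Without this explicit test function your argument has no mechanism that produces the correct dependence on $m$.
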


\begin{proof}
\emph{Step 1. General setting and localization.}
We start with the assertion \eqref{eqlemma1}.  Let $\mu$, $\eps$, $\theta$, $\lambda$, $\ell$ and $m$ be given such that the assumptions of the lemma are satisfied (without $m_0$ which will be chosen at the end of the proof), and let $u\in\mathcal{A}_N$. For  ease of notation we 
shall work for most of the proof with the function $v:(-L,\infty)\times(0,1)\rightarrow\R$ given by $v(x_1,x_2):=-u(x_1,x_2)+\theta x_2$. 
Then $\partial_2v\in\{0,1\}$ almost everywhere in $(-L,0)\times(0,1)$ and
\begin{eqnarray}
\label{eq:KMtrans}
J(u)=\mu\int_{(0,\infty)\times(0,1)} |Dv(x)-\theta e_2|^2\dx+\int_{-L}^0\int_0^1 (\partial_1v(x))^2\dx+\eps|\partial_2\partial_2v|=:F(v).
\end{eqnarray}
For any $x_1^*\in (-\ell,0)$ we estimate
\begin{equation*}
\min_{\alpha\in\R} \|v(x_1^*,x_2)-\theta x_2-\alpha\|_{L^1((0,1))}
 \le \|v(x_1^*,x_2)-v(0,x_2)\|_{L^1((0,1))}
 +\min_{\alpha\in\R}\|v(0,x_2)-\theta x_2-\alpha\|_{L^1((0,1))}
\end{equation*}
(with the functions $v(x_1,\cdot)$ understood as traces).
The first term is controlled by
\begin{eqnarray*}
\|v(x_1^*,\cdot)-v(0,\cdot)\|_{L^1((0,1))}&\leq& \ell^{1/2}\left(\int_0^1\int_{-\ell}^0\left|\partial_1v(x_1,x_2)\right|^2\dx\right)^{1/2}
\leq \ell^{1/2}(F(v))^{1/2},
\end{eqnarray*}
and the second one by
\begin{align*}
\min_{\alpha\in\R}\|v(0,x_2)-\theta x_2-\alpha\|_{L^1((0,1))}
&\le \min_{\alpha\in\R}\|v(0,x_2)-\theta x_2-\alpha\|_{L^2((0,1))}\\
&\le c \|Dv-\theta e_2\|_{L^2((0,1)^2)}
\leq\frac{c}{\mu^{1/2}}(F(v))^{1/2}\,,
\end{align*}
where we used H\"older's inequality and the trace theorem.
Therefore
\begin{equation}\label{eqlbKMl101}
\min_{\alpha\in\R} \|v(x_1^*,x_2)-\theta x_2-\alpha\|_{L^1((0,1))}
 \le \ell^{1/2}(F(v))^{1/2}+
 \frac{c}{\mu^{1/2}}(F(v))^{1/2}.
\end{equation}
Choose $x_1^*\in [-\ell,0]$ which minimizes $|\partial_2\partial_2v|(\{x_1\}\times
(0,1))$. Such a point $x_1^\ast$ exists since $|\partial_2\partial_2v|(\{x_1\}\times (0,1))$ is discrete
(by  \cite[Section 4]{giuliani-mueller:12} we also know that we can assume without loss of generality that 
$x_1^\ast=-\ell$, this is however not needed here).
Consider 
the central
interval $I:=(1/3,2/3)$. 
Let $I_*:=\{x_1^*\}\times I$ and  $n:=|\partial_2\partial_2v|(I_*)$.
If $n\lambda\ge 1$ then 
\begin{eqnarray}\label{eq:lemmaposs1}
F(v)\ge \frac{\ell
 \eps}{\lambda},
 \end{eqnarray} and the assertion is proven. 
Otherwise, consider 
$\omega :=\{x_2\in I: \partial_2v(x_1^*,x_2)=1\}$. Note that $\omega$ consists of at most $1/\lambda$ intervals.
We claim that we only need to treat the case $|\omega|\sim\theta$. 
Indeed, for every $\beta\in\R$,
\begin{equation*}
 c\theta \le \min_{\alpha\in\R} \|\theta x_2-\alpha\|_{L^1(I)} \le 
  \|v(x_1^*,x_2)-\theta x_2-\beta\|_{L^1(I)} +
 \min_{\alpha\in\R} \|v(x_1^*,\cdot)-\alpha\|_{L^1(I)}.
\end{equation*}
Since $\partial_2 v\in\{0,1\}$ almost everywhere, we have $\|v(x_1^*,\cdot)-\alpha\|_{L^\infty(I)}\le \frac12
|\omega |$ for some $\alpha\in\R$. 
Recalling (\ref{eqlbKMl101}) we get
\begin{equation*}
 c\theta\le  \ell^{1/2}(F(v))^{1/2}+
 \frac{c}{\mu^{1/2}}(F(v))^{1/2}+|\omega|\,,
\end{equation*}
therefore either $|\omega|\gtrsim\theta$ or at least one of
\begin{eqnarray}\label{eq:possneumann}
\label{eq:cases}
F(v)\gtrsim\frac{\theta^2}{\ell} \text{\qquad or \qquad}F(v)\gtrsim\mu\theta^2
\end{eqnarray}
must hold. If (\ref{eq:possneumann}) holds, the assertion is established, since $\lambda^2 m\ln^2\frac{1}{m}\leq 1$ for all $\lambda,m\in(0,1)$.
  We thus can focus on the case  $|\omega|\gtrsim\theta$
and observe that if $|\omega|\ge2\theta$ then
 $v(x_1^*,x_2)\ge v(x_1^*,x_2')+2\theta$ for all $x_2\in (2/3,1)$ and all $x_2'\in (0,1/3)$, leading to (recall \eqref{eqlbKMl101})
\begin{equation*}
c\theta \le \min_{\alpha\in\R} \|v(x_1^*,x_2)-\theta x_2-\alpha\|_{L^1((0,1))}
 \le \ell^{1/2}(F(v))^{1/2}+
 \frac{c}{\mu^{1/2}}(F(v))^{1/2}.
\end{equation*}
 Again, either one of the possibilities in (\ref{eq:possneumann}) holds and the assertion
 is proven, or $|\omega|<2\theta$. In the rest we hence consider the case ${\BZ{c\theta\leq |\omega|\leq 2}}\theta$.

\emph{Step 2: Construction of the test function and proof of \eqref{eqlemma1} for small $\theta\leq m\leq m_0$.}
The set  $B_{\lambda m}(\omega )$, which is the $\lambda m$-neighbourhood of $\omega$, is the union of at most $n<1/\lambda$ intervals $(y_i-g_i,
y_i+g_i)$, where $y_i$ are the midpoints of the connected components of $\omega$. 
Then $\lambda m\leq g_i\leq \lambda m+\frac{|\omega|}{2}\leq \lambda m+\theta$ for every $i$, and $\sum_{i=1}^n 2g_i\leq |\omega|+2n\lambda m\leq 2\theta+2m\leq 4m$. 
We define
\begin{equation}\label{eq:defpsi}
  \psi(t):=\max_i \psi_i(t-y_i)\,,\hskip1cm
  \psi_i(t):=\left[ \ln \frac{1}{6m} - \left(\ln \frac{|t|}{g_i}\right)_+ \right]_+\,.
\end{equation}
Then $\psi_i=\ln\frac 1 {6m}$ in $(-g_i,g_i)$, and $\psi_i=0$ outside $(-g_i/ (6m),
g_i/ (6m))$. 
Hence, 
since $g_i/(6m)\leq (\lambda m+\theta)/(6m)\le1/3$ and $y_i\in\omega\subset(1/3,2/3)$, we deduce that $\supp\psi\subset[0,1]$.
Note that $\|\psi_i\|_{L^1(\R)}\le c g_i/ m$ and therefore 
\begin{eqnarray}\label{eq:psiL1}
  \|\psi\|_{L^1((0,1))} \le \sum_{i=1}^n \|\psi_i\|_{L^1(\R)}\leq\frac{c}{m}\sum_{i=1}^n g_i\leq c.
\end{eqnarray}
Further,
\begin{eqnarray}\label{eq:psi'L2}
\|\psi'\|_{L^2((0,1))}\le \left(\sum_{i=1}^n\int_0^1|\psi_i'(t)|^2\dt\right)^{1/2}\leq c\left(\sum_{i=1}^n\frac{1}{g_i}\right)^{1/2}\leq c\left(\frac{n}{\lambda m}\right)^{1/2}
\le  \frac{c}{m^{1/2}\lambda}.
\end{eqnarray}
We next estimate $[\psi]_{H^{1/2}_P((0,1))}$. For every $i=1,\dots n$ let $\Psi_i$ be the radially symmetric extension of $\psi_i$ to $\R^2$. Then 
\begin{eqnarray*}
\int_0^\infty\int_{-1/3}^{1/3}|D\Psi_i(x)|^2\dx\sim\int_{g_i}^{g_i/(6m)} r\frac{1}{r^2}\,\dr\sim \ln\frac{1}{m},
\end{eqnarray*}
and hence,
\begin{eqnarray}\label{eq:psiH1/2P}
[\psi]_{H^{1/2}_P((0,1))}^2&\leq&
\sum_{i=1}^n\int_0^\infty\int_0^1|D\Psi_i(x_1,x_2-y_i)|^2\,\dx
\lesssim n\ln\frac{1}{m}\leq\frac{1}{\lambda}\ln\frac{1}{m}.
\end{eqnarray}
Since $\partial_2v(x_1^\ast,\cdot) \psi= \ln \frac{1}{6m}$ on $\omega$, and
$\partial_2v(x_1^\ast,\cdot)\psi\ge 0$ everywhere, we have by \eqref{eq:interpolFourier2} and with $u_R(x_2):=\theta x_2$,
\begin{eqnarray}\label{eq:compN}
&& |\omega|  \ln \frac1{6 m} \le \int_{0}^1 \partial_2v(x_1^\ast,x_2) \psi(x_2)\dy\nonumber\\
&=& -\int_{0}^1(v(x_1^\ast,x_2)-v_0(x_2))\psi'(x_2)\dy-\int_0^1(v_0(x_2)-\theta x_2)\psi'(x_2)\dy+\int_0^1\theta \psi(x_2)\dy
\nonumber\\
&\leq&\int_{x_1^\ast}^0\int_0^1\partial_1v(x_1,x_2)\psi'(x_2)\dx+c[v_0-u_R]_{H_N^{1/2}((0,1))}[\psi]_{H^{1/2}_P((0,1))}+\theta\|\psi\|_{L^1((0,1))}\nonumber\\
&\leq&\frac{c}{m^{1/2}\lambda} |x_1^\ast|^{1/2}F^{1/2}(v)+\frac{cF^{1/2}(v)}{(\lambda\mu)^{1/2}}\ln^{1/2}\frac{1}{m}+c\theta,
\end{eqnarray}
where in the last step we used  \eqref{eq:psiL1}, \eqref{eq:psi'L2} and \eqref{eq:psiH1/2P}.
Since $|\omega|\gtrsim\theta$, 
there is $ m_0\in(0,1/2]$ such that for  $ m\le m_0$ the last term of the right-hand side can be absorbed in the left-hand side {\BZ{and $\ln\frac{1}{6m}\gtrsim\ln\frac{1}{m}$}} 
(this defines $ m_0$).
We deduce that 
\begin{equation}
(F(v))^{1/2} \ge  c\theta \min\left\{\frac{\lambda m^{1/2} }{\ell
^{1/2} } \ln \frac1 m,\ \lambda^{1/2} \mu^{1/2} (\ln\frac1 m)^{1/2}
\right\},
\end{equation}
which, combined with \eqref{eq:lemmaposs1} and  \eqref{eq:possneumann}, concludes the proof of \eqref{eqlemma1}
if the first option in (\ref{lemma1cases}) holds.

{\emph{Step 3: Construction of the test function and proof of \eqref{eqlemma1} for large $\theta\geq m_0=m$.}} 
Note that 
if the second option in (\ref{lemma1cases}) holds
then $m\sim \theta\sim 1$. {\BZ{In this case, the argument from \cite{conti:06} can be applied to obtain the lower bound but we present here an alternative proof in the spirit of {\em Step 2}.}} 
At variance with what was done in Step 2, in this case the test function is supported inside $\omega$.
The set $\omega$ is the union of $n\le 1/\lambda$ intervals, 
$(y_i-g_i,y_i+g_i)$, and $|\omega|=\sum_i 2 g_i\sim 1$. {\BZ{Note that the points $y_i$ are as in {\em Step 2} chosen to be the midpoints of the connected components of $\omega$, but the radii $g_i$ are different.}} We set
\begin{equation*}
  \psi(t):= \max_i ( g_i-|t-y_i|)_+=\begin{cases}
            \dist(t,\partial\omega) & \text{ if } t\in\omega,\\
	      0 & \text{ otherwise.}
           \end{cases}
\end{equation*}
We compute
\begin{equation*}
 \int_0^1\psi(t)\dt = \sum_{i=1}^n 2 \int_0^{g_i} s\, \ds = \sum_{i=1}^n g_i^2 \ge \frac1n (\sum_{i=1}^n g_i)^2 \gtrsim \lambda
\end{equation*}
and
\begin{equation*}
 \int_0^1|\psi'(t)|^2\dt = \sum_{i=1}^n 2g_i =|\omega| \le 1\,.
\end{equation*}
Extending $\psi$ to $(0,\infty)\times(0,1)$ by $\Psi(x):=\max_i ( g_i-|x-(0,y_i)|)_+$, we obtain
\begin{equation*}
 [\psi]_{H^{1/2}_P((0,1))}^2\le \int_{(0,\infty)\times(0,1)}|D\Psi(x)|^2\dx 
  = \sum_{i=1}^n \frac12 |B(0, g_i)| =\frac\pi2 \sum_{i=1}^n g_i^2\,.
\end{equation*}
As in (\ref{eq:compN}), we use $\psi$ to test $\partial_2 v$, integrate by parts and use (\ref{eq:interpolFourier2}),
\begin{align*}
\sum_{i=1}^n g_i^2 =& \int_{0}^1  \psi(x_2)\dy=
 \int_{0}^1 \partial_2v(x_1^\ast,x_2) \psi(x_2)\dy\nonumber\\
=& -\int_{0}^1(v(x_1^\ast,x_2)-v_0(x_2))\psi'(x_2)\dy-\int_0^1(v_0(x_2)-\theta x_2)\psi'(x_2)\dy+\int_0^1\theta \psi(x_2)\dy
\nonumber\\
\leq&\int_{x_1^\ast}^0\int_0^1\partial_1v(x_1,x_2)\psi'(x_2)\dx+c[v_0-u_R]_{H_N^{1/2}((0,1))}[\psi]_{H^{1/2}_P((0,1))}+\theta
\sum_{{\BZ{i=1}}}^{\BZ{n}} g_i^2\,.
\end{align*}
Since $\theta\le 1/2$, 
\begin{align*}
\sum_{i=1}^n g_i^2
\leq&c |x_1^\ast|^{1/2}F^{1/2}(v)+\frac{cF^{1/2}(v)}{\mu^{1/2}} \left({\com{\sum_{i=1}^n}} g_i^2\right)^{1/2}\,.
\end{align*}
Therefore at least one of 
\begin{equation*}
 F(v)\gtrsim \frac{1}{\ell} \left(\sum_{i=1}^n g_i^2\right)^{2}\gtrsim \frac{\lambda^2}{\ell}
\hskip5mm\text{and}\hskip5mm
 F(v)\gtrsim \mu \sum_{i=1}^n g_i^2\gtrsim \mu\lambda
\end{equation*}
must hold and the proof of \eqref{eqlemma1} is concluded.

{\emph{Step 4: Proof of \eqref{eqlemma1.1}}}
Let $u\in\mathcal{A}_P$ and choose $v$ and $x_1^\ast$ as in {\emph{Step 1}}. By the periodicity assumption, we have 
$|\{x_2\in(0,1):\partial v(x_1^\ast,x_2)=1\}|=\theta$. Hence, possibly choosing a different cell of periodicity, we may assume that $\theta/3\leq|\omega|\leq\theta$, where $\omega$ is defined as in {\emph{Step 1}}, i.e., $\omega:=\{x_2\in(1/3,2/3):\partial_2v(x_1^\ast,x_2)=1\}$. 
The rest of the proof is the same as for $\mathcal{A}_N$.
We remark that deriving the condition $|\omega|\sim\theta$ directly from the periodicity allowed us to avoid using (\ref{eq:possneumann}), and therefore 
the option $F(v)\gtrsim \mu\theta^2$ does not appear in the conclusion.
\end{proof}

We now conclude the proof of the lower bounds of Theorem \ref{th:1}.
\begin{proposition} \label{prop:KMlb}There is a constant $c>0$ such that for all $\mu$, $\eps$, $L>0$ and all $\theta\in(0,1/2]$, we have
with $\hat\eps=\eps/\theta^2$,
  \begin{equation}\label{eq:lb1}
    \min_{u\in\mathcal{A}_N}J(u)\ge c \theta^2 \min\left\{ \hat\eps^{2/3}L^{1/3},
( \hat\eps \mu L)^{1/2}\ln^{1/2} (3+\frac{\hat\eps}{\mu^3L  }), 
 (\hat\eps \mu L)^{1/2} \ln^{1/2} \frac1\theta, \mu, \hat\eps^{1/2} \right\}\,,
  \end{equation}
  and
  \begin{equation}\label{eq:lb2}
  \min_{u\in\mathcal{A}_P}J(u)\geq c \theta^2\max\left\{\hat{\eps} L,\ \min\left\{\hat{\eps}^{2/3}L^{1/3},\ (\hat{\eps} L\mu)^{1/2}\ln^{1/2}(3+\frac{\hat{\eps}}{\mu^3 L}),\,\left(\hat{\eps} L\mu\right)^{1/2}\ln^{1/2}\frac{1}{\theta}\right\}\right\}.
  \end{equation}
  \end{proposition}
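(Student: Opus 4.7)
My plan is to derive the stated lower bounds by applying Lemma~\ref{lemma1} with parameters $(\ell,\lambda,m)$ tailored to each of the candidate terms in the claimed minimum. The guiding principle is that, whenever one of the five terms $A_1,\dots,A_5$ in \eqref{eq:lb1} is the smallest, I choose $(\ell,\lambda,m)$ so that all four expressions in \eqref{eqlemma1} exceed a constant multiple of that $A_i$. The terms $A_5=\mu$ and $A_2=\hat\eps^{1/2}$ are immediate. The $\mu$-bound is delivered unconditionally by the second term of \eqref{eqlemma1}, and for $\hat\eps^{1/2}$ I take $\ell=\hat\eps^{-1/2}$, $\lambda=1$, $m=m_0$, where the admissibility $\ell\le L$ reduces to $\hat\eps L^2\ge 1$, which is exactly the condition $\hat\eps^{1/2}\le\hat\eps^{2/3}L^{1/3}$. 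For the pure branching term $A_1=\hat\eps^{2/3}L^{1/3}$, I take $\ell=L$, $m=m_0$, $\lambda\sim(\hat\eps L^2)^{1/3}$; feasibility $\lambda\le 1$ is precisely $\hat\eps^{2/3}L^{1/3}\le\hat\eps^{1/2}$, while the nontrivial inequality $\mu\lambda\gtrsim\hat\eps^{2/3}L^{1/3}$ on the third term, equivalent to $\mu^3L\gtrsim\hat\eps$, follows from $A_1\le A_3$ via the elementary implication $t^{1/3}\le C\ln(3+t)\Rightarrow t\lesssim 1$ applied to $t=\hat\eps/(\mu^3 L)$.

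The logarithmic regimes $A_3$ and $A_4$ are the heart of the argument. For both I set $\ell=L$ and $\lambda=(\hat\eps L/(\mu\ln(1/m)))^{1/2}$, so that the first and third terms of \eqref{eqlemma1} both equal $(\hat\eps L\mu\ln(1/m))^{1/2}$. The natural selection of $m$ is the value $m^*$ that also makes the fourth term match, namely the solution of $\hat\eps\,m^2\ln(1/m)\sim L\mu^3$. I take $m=\max\{\theta,\min(m^*,m_0)\}$. When $m^*\in[\theta,m_0]$, inverting the defining relation gives $\ln(1/m^*)\sim\ln(3+\hat\eps/(\mu^3L))$ up to a $\ln\ln$ correction, which produces the $A_3$ bound. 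When $m^*<\theta$, equivalently $\hat\eps\theta^2\ln(1/\theta)\gtrsim L\mu^3$, I instead set $m=\theta$; the first and third terms then give $A_4$, and the fourth term $\hat\eps\theta\ln(1/\theta)/\mu$ exceeds $A_4$ precisely in this sub-regime. When $m^*>m_0$ (which requires $\hat\eps\lesssim\mu^3L$), I fall back to the $A_1$-parameters and use that $A_1\sim A_3$ up to constants there. In every sub-regime the derived bound is at least the overall minimum in \eqref{eq:lb1}, using in the last step that if $A_4$ is the overall minimum then $A_4\le A_3$ by definition, so an $A_3$-bound suffices.

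For the periodic assertion \eqref{eq:lb2} the same parameter choices feed into the periodic variant \eqref{eqlemma1.1} of Lemma~\ref{lemma1} to produce the inner minimum $\min\{A_1,A_3,A_4\}$; the $\mu$ and $\hat\eps^{1/2}$ alternatives are absent there and consistently missing from the periodic phase diagram. The additional lower bound $J(u)\ge c\theta^2\hat\eps L$ is structural: periodicity in $x_2$ forces $|\{x_2\in(0,1):\partial_2u(x_1,x_2)=-1+\theta\}|=\theta$ for almost every $x_1\in(-L,0)$, so the $x_2$-slice of the piecewise-constant function $\partial_2u(x_1,\cdot)$ admits at least two jump points, whence $|\partial_2\partial_2u|(\{x_1\}\times(0,1))\ge 2$; integrating in $x_1$ yields $\eps|\partial_2\partial_2u|\ge 2\eps L=2\theta^2\hat\eps L$. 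Taking the maximum of the two lower bounds gives \eqref{eq:lb2}.

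The chief technical obstacle is the case analysis around the $A_3$-to-$A_4$ transition, where the position of $m^*$ relative to $\theta$ and $m_0$ dictates which of the two balancing strategies applies, and one must check that the resulting bound in each sub-regime is never smaller than the overall minimum. In particular, confirming that the subregime inequality $\hat\eps\theta^2\ln(1/\theta)\gtrsim L\mu^3$ is exactly what is needed to make the fourth lemma term reach $A_4$ with the choice $m=\theta$, and that its complement correctly sets up the $m=m^*$ choice yielding $A_3$, is the step that requires the most care but relies only on elementary manipulations once the parametric bound of Lemma~\ref{lemma1} is in hand.
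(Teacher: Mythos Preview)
Your proposal is essentially correct and follows the same approach as the paper: both apply Lemma~\ref{lemma1} with parameters $(\ell,\lambda,m)$ chosen according to the regime. The paper partitions the parameter space directly by the relative sizes of $\hat\eps$, $1/L^2$, and $\mu^3 L$, whereas you organize by which candidate term $A_i$ realises the minimum; these two decompositions coincide once one translates the conditions $A_i\le A_j$ back into inequalities on $\hat\eps,\mu,L,\theta$. Your $m^*$, defined implicitly by $\hat\eps (m^*)^2\ln(1/m^*)\sim L\mu^3$, differs from the paper's explicit choice $m\sim(\mu^3L/\hat\eps)^{1/2}m_0$ only by a $\sqrt{\ln}$ factor, and both yield $\ln(1/m)\sim\ln(3+\hat\eps/(\mu^3L))$ in the relevant range.

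Two small points deserve tightening. First, your formula $m=\max\{\theta,\min(m^*,m_0)\}$ does not satisfy the constraint \eqref{lemma1cases} when $\theta>m_0$; in that case one must take $m=m_0$ (as the paper does via $\min\{m_0,\theta\}$), and since then $\theta\sim 1$ and $\ln(1/\theta)\sim 1$, the logarithmic terms collapse harmlessly. Second, you should check that $\lambda\le 1$ in each sub-case: for the logarithmic choice $\lambda=(\hat\eps L/(\mu\ln(1/m)))^{1/2}$ this is equivalent to $\hat\eps L\le\mu\ln(1/m)$, which indeed follows from $A_3\le\mu$ or $A_4\le\mu$ when one of those is the overall minimum (the paper handles the complementary case by a separate sub-case yielding the $\mu$ bound; in your organisation this is absorbed because $\mu$ is always present in \eqref{eqlemma1}). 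Your ``fallback'' sub-case $m^*>m_0$ is in fact vacuous once $A_3$ or $A_4$ is the minimum, since $A_3\le A_1$ already forces $\hat\eps\gtrsim\mu^3 L$ and hence $m^*\lesssim m_0$.
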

\begin{proof}
To prove \eqref{eq:lb1}, we use \eqref{eqlemma1} with appropriate choices for the parameters $\lambda$, $ m$ and $\ell$. 
We start fixing $u\in \mathcal A_N$, and distinguish several cases.
\begin{itemize}
\item[(i)] If $\hat{\eps}\leq\min\{1/L^2,\ \mu^3L\}$, we choose $ m:=m_0$, $\lambda:=\hat{\eps}^{1/3}L^{2/3}$, and $\ell
:=L$. 
Then (\ref{lemma1cases}) holds for all $\theta$, and by \eqref{eqlemma1} we obtain $J(u)\gtrsim\theta^2\min\{\hat{\eps}^{2/3}L^{1/3},\ \mu\}$. 
\item[(ii)] If $1/L^2\leq\hat{\eps}\leq\mu^3 L$, we choose $ m:=m_0$, $\lambda:=1$ and $\ell
:=\hat{\eps}^{-1/2}$. 
As above,  \eqref{eqlemma1} implies
$J(u)\gtrsim\theta^2\min\{\mu,\ \hat{\eps}^{1/2}\}$.
\item[(iii)] It remains to consider $\hat{\eps}>\mu^3L$.
We set
\begin{equation}\label{eqdefm}
m:=\max\{\min\{m_0,\theta\}, \frac{\mu^{3/2}L^{1/2}}{\hat{\eps}^{1/2}}m_0\}\,.
\end{equation}
If $\theta\ge m_0$ this gives $m=m_0$; if $\theta\le m_0$ instead $\theta\le m\le m_0$. Therefore we can use  \eqref{eqlemma1}. Note that $\ln\frac{1}{m}\geq\min\{\ln\frac{1}{\theta},\ \ln\frac{\hat{\eps}^{1/2}m_0}{\mu^{3/2}L^{1/2}}\}\gtrsim\min\{\ln\frac{1}{\theta^2},\ \ln(3+\frac{\hat{\eps}}{\mu^3 L})\}$.
We distinguish two subcases:
\begin{itemize}
\item[(a)] If $\hat{\eps}\le\frac{\mu}{L}\ln\frac{1}{ m}$, we choose $\ell
:=L$ and $\lambda:=\left(\frac{\hat{\eps}L}{\mu\ln\frac{1}{ m}}\right)^{1/2}$. Then (since $ m\gtrsim\frac{\mu^{3/2}L^{1/2}}{\hat{\eps}^{1/2}}$ and $\ln\frac{1}{ m}\gtrsim1$), we have $J(u)\gtrsim\theta^2\min\{(\hat{\eps}L\mu)^{1/2}\ln^{1/2}\frac{1}{ m},\ \mu\}$.
\item[(b)] If $\hat{\eps}>\frac{\mu}{L}\ln\frac{1}{ m}$, we choose $\lambda:=1$ and $\ell
:=\frac{\mu^{3/4}L^{1/4}}{\hat{\eps}^{3/4}}$. Then $J(u)\gtrsim\mu\theta^2$.
\end{itemize}
\end{itemize}
We finally prove \eqref{eq:lb2} using \eqref{eqlemma1.1}. We fix $u\in \mathcal A_P$. By the periodicity assumption we have $J(u)\gtrsim \theta^2\hat\eps L$.
 As above, we distinguish several cases.
\begin{itemize}
\item[(i)] 
If $\hat{\eps}\leq\min\{1/L^2,\ \mu^3L\}$ we choose $ m:=m_0$, $\lambda:=\hat{\eps}^{1/3}L^{2/3}$, and $\ell
:=L$ as in (i) above and obtain  $J(u)\gtrsim\theta^2\hat{\eps}^{2/3}L^{1/3}$.
\item[(ii)] If $\hat\eps\ge L^{-2}$ then
$J(u)\gtrsim \theta^2\hat\eps L\gtrsim \theta^2(\hat{\eps} L +\hat\eps^{2/3}L^{1/3})$ and the assertion is proven.
\item[(iii)] It remains to consider $\hat{\eps}>\mu^3L$.
We define $m$ as in (\ref{eqdefm}) and distinguish the same two subcases as above.
\begin{itemize}
\item[(a)] If $\hat{\eps}\le\frac{\mu}{L}\ln\frac{1}{ m}$, we choose the same parameters as in (iii)(a) above and obtain $J(u)\gtrsim\theta^2(\hat{\eps}L\mu)^{1/2}\ln^{1/2}\frac{1}{m}$.
\item[(b)] If $\hat{\eps}>\frac{\mu}{L}\ln\frac{1}{ m}$, 
then as in (ii) we obtain
$J(u)\gtrsim \theta^2\hat\eps L\gtrsim \theta^2 (\hat\eps \mu L)^{1/2} \ln^{1/2} \frac1m$.
\end{itemize}
\end{itemize}
\end{proof}
\section{Proof of the scaling law for plastic microstructures}\label{sec:disloc}
We now turn to the model for plastic microstructures described in Section \ref{sec:dislocmodel}, and aim to prove Theorem \ref{th:2}. 
We note that it suffices to prove Theorem \ref{th:2} for $L=1$. 
The result for general cubes $\Omega:=(0,L)^3$ then follows by rescaling $\tilde{u}(x):=\frac{1}{L}u(Lx)$ and $\tilde{\beta}(x):=\beta(Lx)$ (see \cite[Section 4]{conti-ortiz:05}).
\subsection{Upper bound}
We start with the upper bound.
\begin{proposition}
\label{prop:dislocub}
There is a constant $c>0$ such that for all ${\com{\theta\in(0,\frac{1}{2}]}}$, and all $\mu$, $\eps>0$, we have
\begin{eqnarray}\label{eq:disub}
 \inf E(u,\beta)\leq c\theta^2\min\left\{\hat{\eps}^{2/3},\,(\hat{\eps} \mu)^{1/2}\ln^{1/2}\left(3+\frac{\hat{\eps}}{\mu^3 }\right),\,\left(\hat{\eps} \mu\right)^{1/2}\ln^{1/2}\left(\frac{1}{\theta^2}\right),\, \mu,\,1\right\}.
\end{eqnarray}
\end{proposition}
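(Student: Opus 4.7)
The plan is to produce, for each of the five terms in the minimum on the right-hand side of \eqref{eq:disub}, an admissible pair $(u,\beta)\in\mathcal A$ whose energy realises it; the bound then follows by taking the best among them. I first reduce to $L=1$ by the standard rescaling (see \cite{conti-ortiz:05}) and take all constructions independent of $x_3$, so that $\partial_3\beta_\xi=\partial_3\beta_\eta=0$ and the problem lives in the $(x_\xi,x_\eta)$-plane.

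The ``$1$'' and ``$\mu$'' bounds come from two elementary single-slip constructions. For the ``$1$'' bound, take $u\equiv u_0$ on $\R^3$ and $\beta:=(1-\theta)e_\xi\chi_\Omega$: then $|Du_0-\beta|^2=|\theta e_\eta|^2=\theta^2$ pointwise in $\Omega$, $\beta$ is constant and the inside surface term vanishes, and the outside term vanishes since $u=u_0$, giving $E\lesssim\theta^2$. For the ``$\mu$'' bound, take $\beta:=e_\xi\chi_\Omega$ and $u:=x_\xi$ inside $\Omega$, extended outside by $u:=u_0+\phi\cdot\theta(x_\xi-x_\eta)$ with $\phi$ a smooth cutoff equal to $1$ on $\partial\Omega$ and $0$ at distance $1$ from $\Omega$; inside, $Du=\beta$ and the surface term vanish, while $|D(u-u_0)|^2\lesssim\theta^2$ on the unit-volume transition layer gives outside cost $\lesssim\mu\theta^2$.

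For the three remaining scaling regimes I adapt the two-scale branching construction $u_{TSB}$ of Section~\ref{sec:ub}. For parameters $(N,h)$ chosen as in cases (iii)--(v) of the proof of Proposition~\ref{prop:KMub} (with $\ell=L=1$), build a periodic scalar $v$ of type $u_{TSB}$ on $[0,1]^2$, symmetrised under $x_\xi\mapsto 1-x_\xi$ so that the branching refines near both faces $x_\xi\in\{0,1\}$ of $\Omega$. Let $\chi_B$ be the indicator of the minority phase (where $\partial_\eta v$ takes the value $\theta-1$ rather than $\theta$) and set
\[\beta_\xi:=(1-\chi_B)\chi_\Omega,\quad\beta_\eta:=\chi_B\chi_\Omega,\quad\beta_3:=0.\]
Choose $u$ so that $\partial_\eta u=\beta_\eta$ a.e.\ (obtained by integrating $\beta_\eta$ in $x_\eta$ to form a ``staircase'' displacement) and $\partial_\xi u$ absorbs the KM branching variation in the role of the ``free'' derivative $\partial_1 u_{TSB}$, with $u$ transitioning smoothly to $u_0$ near $\partial\Omega$ so that the outside extension has $H^{1/2}$-trace cost matched by the KM austenite cost \eqref{eq:TSB3}. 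Then the inside elastic term $\int_\Omega|Du-\beta|^2$ is controlled by the KM martensite cost in \eqref{eqnenergytsb}, the outside $\mu$-term by \eqref{eq:TSB3}, and the inside surface term by Lemma~\ref{lem:branch}(v)--(vi) combined with the truncation of Remark~\ref{rem:trunc}. Optimising $(N,h)$ exactly as in Proposition~\ref{prop:KMub} yields the three bounds.

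The main technical obstacles are two. First, the extra surface term $\eps\int|\partial_\xi\beta_\eta|$ has no Kohn--Müller analogue; it is handled by using the \emph{disconnected} branching pattern $b$ of Lemma~\ref{lem:branch}, whose property~(vi) controls precisely $\int|\partial_1\partial_2 b|$, rather than the simpler connected pattern $\hat b$, together with the finite-level truncation of Remark~\ref{rem:trunc} (cf.\ \eqref{eq:surface}). Second, $u$ must be chosen consistently with the $\{e_\xi,e_\eta\}$-valued structure of $\beta$ so that $|Du-\beta|^2$ matches, not merely bounds, the KM elastic contribution $(\partial_1 u_{TSB})^2$; this is delicate because the constraint $\partial_\eta u=\beta_\eta\in\{0,1\}$ forces a staircase in the bulk that must be reconciled both with the branching refinement in $x_\xi$ and with the smooth boundary condition $u_0$ on $\partial\Omega$.
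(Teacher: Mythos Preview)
Your treatment of the trivial $\theta^2$ and $\mu\theta^2$ regimes is fine and matches the paper. The gap is in the three branching regimes, and it is geometric.

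You propose to make everything independent of $x_3$ and to let the branching refine ``near both faces $x_\xi\in\{0,1\}$ of $\Omega$''. But $\Omega=(0,1)^3$ is a \emph{Cartesian} cube: its faces are the six planes $x_i\in\{0,1\}$, $i=1,2,3$, and the planes $x_\xi=\mathrm{const}$ are diagonal sections, not faces. More seriously, if your pair $(u,\beta)$ is independent of $x_3$, then on the two faces $x_3\in\{0,1\}$ the trace of $u-u_0$ equals the full interior branching profile. That profile has $\partial_2 u\in\{\theta,-1+\theta\}$ (equivalently $\partial_\eta u\in\{0,1\}$) on a set of area $\sim\theta$, so already $\int_{(0,1)^2}|D(u-u_0)|^2\gtrsim\theta$, and no harmonic extension into $\{x_3<0\}$ or $\{x_3>1\}$ can bring the exterior term below $\mu\theta$. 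This destroys all three target scalings. Your sentence ``$u$ transitioning smoothly to $u_0$ near $\partial\Omega$'' does not address these two faces: any such transition layer in the $x_3$ direction would have to change $\beta$ (since $\partial_\eta u$ must leave $\{0,1\}$), and then either the elastic term or the surface term on that layer is uncontrolled.

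The paper's remedy is precisely the missing idea: do \emph{not} make the construction independent of $x_3$ and do \emph{not} laminate in the $e_\eta$ direction. Instead take $x_2$ (Cartesian) as the laminate direction and let the branching refine along the signed distance $d(x):=\pm\dist((x_1,x_3),\partial(0,1)^2)$, setting $u(x)=x_\xi-\sqrt2\,v_{\mathrm{KM}}(d(x),x_2)$ and $\beta=e_\xi-\sqrt2\,\partial_2 v_{\mathrm{KM}}(d(x),x_2)\,\chi_{\{d\le T\}}e_2\in\{e_\xi,e_\eta\}$. Because $d$ vanishes on all four faces $x_1,x_3\in\{0,1\}$, the trace there is governed by $v_{\mathrm{KM}}(0,\cdot)$, i.e.\ by the KM austenite cost \eqref{eq:TSB3}; and the periodic boundary conditions $v_{\mathrm{KM}}(\cdot,0)=0$, $v_{\mathrm{KM}}(\cdot,1)=\theta$ force $u=u_0$ on the two faces $x_2\in\{0,1\}$. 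Your observations about Lemma~\ref{lem:branch}(vi) and Remark~\ref{rem:trunc} are correct and are exactly what is needed once this lifting is in place, because the chain rule through $d(x)$ mixes both partial derivatives of $\partial_2 v_{\mathrm{KM}}$ into the surface term, so control of $|D\partial_2 v_{\mathrm{KM}}|$ (not just $|\partial_2\partial_2 v_{\mathrm{KM}}|$) is required.
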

\begin{proof}
\def\vKM{v_\mathrm{KM}}
Following \cite[Section 4.1]{conti-ortiz:05}, test functions for the Kohn-M\"uller model with appropriate modifications can be used to construct test functions for the energy {\com{given in \eqref{eq:disloc}}}. We treat the regimes $\mu\theta^2$  and $\theta^2$ separately, and afterwards outline the general construction and then consider the various regimes. Set 
\[f(\eps,\mu,\theta):=\theta^2\min\left\{\hat{\eps}^{2/3},\,(\hat{\eps} \mu)^{1/2}\ln^{1/2}\left(3+\frac{\hat{\eps}}{\mu^3 }\right),\,\left(\hat{\eps} \mu\right)^{1/2}\ln^{1/2}\left(\frac{1}{\theta^2}\right),\, \mu,\,1\right\}.
 \]
 {\em Step 0: The regimes $\mu\theta^2$ and $\theta^2$. } Choosing $u(x):=(1-\theta)x_\xi
+\theta x_\eta \min\{1,\dist(x,\Omega)\}$ and $\beta:=(1-\theta) e_\xi$, we find that $\inf E(u,\beta)\lesssim\mu\theta^2$.
Choosing $u:=u_0$ and $\beta:=e_\xi$ we get $\inf E(u,\beta)\le 2\theta^2$.

{\em Step 1: The general setting.}  Suppose that there are $T\leq 0$ and a function 
$\vKM:(-1,\infty)\times(0,1)\to\R$ such that 
\begin{eqnarray}\label{eq:vKM1}
\partial_2 \vKM(x)\in\{0,1\}\text{\quad for\ } x_1\leq T,\quad  
\vKM(x_1,0)=0, \quad \vKM(x_1,1)=\theta \text{\quad for all\ }x_1\in(-1,\infty),
\end{eqnarray}
and 
\begin{align}\label{eq:vKM2}
& \int_{(-1,T)\times(0,1)} (|\partial_1\vKM(x)|^2\dx + \eps |D\partial_2\vKM|)\\
& +\int_{(T,0)\times(0,1)}|D\vKM(x)|^2\dx+\int_{(0,\infty)\times(0,1)} |D\vKM(x)-\theta e_2|^2\dx \le c f(\eps,\theta,\mu).
\nonumber
\end{align}
We use the signed distance
\begin{equation}
 d(x):=\begin{cases}
	-\dist((x_1,x_3),\partial(0,1)^2) &\text { if } (x_1,x_3)\in(0,1)^2,\\
	  \dist((x_1,x_3),(0,1)^2) &\text { if } (x_1,x_3)\not\in(0,1)^2
      \end{cases}
\end{equation}
to define
\begin{equation}
 u(x):=\begin{cases}
  x_\xi-\sqrt2\vKM(d(x),x_2) &\text{ if } x_2\in (0,1),\\
       u_0(x)& \text{ if } x_2\not\in (0,1)
      \end{cases}
\end{equation}
and 
\begin{equation}
\beta(x):=
 e_\xi-(\sqrt2\partial_2\vKM(d(x),x_2) ) \chi_{\{d(x)\le T\}}e_2\,.
\end{equation}
We collect some properties of these functions: 
Since $u_0(x)=x_\xi-\sqrt2\theta x_2$, the function $u$ is continuous at $x_2\in\{0,1\}$. For the rest we only need to consider the stripe $x_2\in(0,1)$.
Further, $\beta\in\{e_\xi,e_\eta\}$,
\[|Du-\beta|(x)=\begin{cases}
\sqrt2 |\partial_1\vKM(d(x),x_2)|&\text{\ if\ }d(x)\leq T,\\
\sqrt{2}|D\vKM(d(x),x_2)|&\text{\ if\ }T\leq d(x)\leq 0,
\end{cases} \]
$|Du-Du_0|=\sqrt2|D\vKM(d(x),x_2)-\theta e_2|$ outside $\Omega$. 
Finally, $|D\beta|\le \sqrt{2} |D\partial_2\vKM|(d(x),x_2)$ on $\{d(x)< T\}$,
 $|D\beta|=0$ on $\{d(x)>T\}$, 
 and $|D\beta|(\{d(x)=T\}) \lesssim \theta$.
 Therefore, by \eqref{eq:vKM2} (for the ease of notation we suppress the constraint $x_2\in(0,1)$ in all domains of integration)
\begin{align*}
E(u,\beta)&\lesssim\int_{\{d(x)\leq T\}}|\partial_1\vKM(d(x),x_2)|^2\dx+\int_{\{T\leq d(x)\leq 0\}}|D\vKM(d(x),x_2)|^2\dx+\\
&+\int_{\{d(x)\leq T\}}\eps|D\partial_2\vKM(d(x),x_2)|+\mu\int_{\{d(x)>0\}}|D\vKM-\theta e_2|^2\dx
+\theta^3\hat\eps
\lesssim f(\eps,\theta,\mu){\SC +\theta^3\hat\eps.}
\end{align*}
{\em Step 2: Construction of $\vKM$. }It remains to find $\vKM$ and $T$ such that \eqref{eq:vKM1} and \eqref{eq:vKM2} hold. We consider the various regimes separately and provide functions $v_{KM}$ and parameters $T\leq 0$ that satisfy \eqref{eq:vKM1} and \eqref{eq:vKM2}. For that, we build on the functions from Section \ref{secpfKM} and in particular the proof of Proposition \ref{prop:KMub} with $L=\ell=1$.
{\SC In each case it is easy to see that the $\theta^3\hat \eps$ term is irrelevant.}
\begin{itemize}
\item[(i)] If $f(\eps,\theta,\mu)=\theta^2\left(\hat{\eps} \mu\right)^{1/2}\ln^{1/ 2}\left(\frac{1}{\theta^2}\right)$ we choose $\vKM(x):=-u_{Lam}(x)+\theta x_2$ and $T:=0$.
\item[(ii)] If $f(\eps,\theta,\mu)=\theta^2\hat{\eps}^{2/3}$, we use a modification of the classical branching construction $u_B$, which is a variant of the branching construction given in \cite[Section 4.1]{conti-ortiz:05} with unequal volume fractions and corresponds to the finite branching construction given in \cite[Pf. of Theorem 2]{zwicknagl:14}. We use the notation of Section \ref{secpfKM} and set $\ell:=1$, $N\sim\hat{\eps}^{-1/3}$, $h:=1$ and $T:=-3^{-I}$ with $I$ given in Remark \ref{rem:trunc}. Note that for $\ell=1$, we always find an admissible $I\in\N$ since $N\geq 1$ and $\theta\leq 1/2$. 
We define $\vKM$ via
\[\vKM(x):=\begin{cases}
-u_B(x)+\theta x_2&\text{\ if\ }x_1\leq T,\\
\theta x_2-\frac{x_1}{T}u_B(T,x_2)&\text{\ if\ }T\leq x_1\leq 0,\\
\theta x_2&\text{\ if\ }x_1\geq 0.
\end{cases} \]
By the computations of Section \ref{secpfKM}, in particular \eqref{eq:surface}, it remains to show 
\begin{eqnarray*}
\int_{(T,0)\times (0,1)}|D\vKM|^2\dx=\int_{(T,0)\times (0,1)}\big|\frac{u_B(T,x_2)}{T}\big|^2+\big|\theta-\frac{x_1}{T}\partial_2u_B(T,x_2)\big|^2\dx\lesssim\theta^2\hat{\eps}^{2/3}.
\end{eqnarray*}
First, since $|u_B(T,x_2)|\lesssim\theta/(2^IN)$, we have
\[ \int_{(T,0)\times (0,1)}\big|\frac{u_B(T,x_2)}{T}\big|^2\lesssim|T|\left(\frac{\theta}{2^INT}\right)^2\dx\leq\frac{\theta^2}{N^2}\sim\theta^2\hat{\eps}^{2/3}.\]
Second, since $|\partial_2u_B(T,x_2)|\leq 1-\theta$,
\begin{eqnarray*}
\int_{(T,0)\times (0,1)}\big|\theta-\frac{x_1}{T}\partial_2u_B(T,x_2)\big|^2\dx\leq |T|=3^{-I}\leq \left(\frac{2}{3}\right)^{2I}\sim\theta^2\hat{\eps}^{2/3},
\end{eqnarray*}
using $1/3\leq 4/9$ and \eqref{eq:abbruch}.
\item[(iii)] Consider now the regime in which $f(\eps,\theta,\mu)=(\hat{\eps} \mu)^{1/2}\ln^{1/2}\left(3+\frac{\hat{\eps}}{\mu^3 }\right)$. Set $\ell=1$, $N\sim(\frac{\mu\ln(3+\frac{\hat{\eps}}{\mu^3 })}{\hat{\eps}})^{1/2}$ and $h\sim N\mu$. Admissibility of these choices follows as in the case of Neumann boundary conditions in the proof of Proposition \ref{prop:KMub}. Choose $I\in\N$ such that \eqref{eq:abbruch} holds and set $T:=3^{-I}$. We define
\begin{eqnarray*}
v_{KM}(x):=\begin{cases}
-u_{TSB}(x)+\theta x_2&\text{\ if\ }x_1\notin(T,0),\\
-\frac{x_1}{T}u_{TSB}(T,x_2)-(1-\frac{x_1}{T})u_{TSB}(0,x_2)+\theta x_2&\text{\ if\ }x_1\in(T,0).
\end{cases}
\end{eqnarray*}
Again, in view of Section \ref{secpfKM}, it suffices to consider $\int_{(T,0)\times(0,1)}|D\vKM|^2\dx$. Using that $|\{x_2\in(0,1):u_{TSB}(T,x_2)\neq u_{TSB}(0,x_2)\}|\leq h$ and $|\partial_2u_{TSB}| \leq 1$, we have
\begin{align*}
&\int_{(T,0)\times (0,1)}|D\vKM|^2\dx=\\
=&\int_{(T,0)\times (0,1)}\left\{\big|\frac{u_{TSB}(T,x_2)-u_{TSB}(0,x_2)}{T}\big|^2+\big|\theta-\frac{x_1}{T}\partial_2u_{TSB}(T,x_2)-(1-\frac{x_1}{T})\partial_2u_{TSB}(0,x_2)\big|^2\right\}\dx\\
\leq&\frac{|T|h}{T^2}\left(\frac{\theta}{2^IN}\right)^2+|T|h\leq \left(\frac{3}{4}\right)^I\mu\frac{\theta^2}{N}+N\mu 3^{-I}\leq \mu\frac{\theta^2}{N}+\left(\frac{2}{3}\right)^{2I}N\mu\leq 2\mu\frac{\theta^2}{N}\lesssim(\mu\hat{\eps})^{1/2}\theta^2,
\end{align*}
 where we again used that $1/3\leq 4/9$ and \eqref{eq:abbruch}.
\end{itemize}
\end{proof}
\subsection{Lower bound}
We split the proof of the lower bound in several lemmas. We start with the local structure of $\beta$.
\begin{lemma}\label{lemmarect}
 Let $a,b>0$, $0<\theta\leq1/2$, and $N{\BZ{\in\N}}$, and set $R:=(0,a)e_\xi\times(0,b)e_\eta$. Suppose that $v$, $w\in L^1(R)$ with $vw=0$ almost everywhere. Assume further that
 \begin{eqnarray}\label{eq:disass1}
 \int_R v(x)\dx \ge \frac{1}{4}\theta |R| \,,
 \end{eqnarray}
and also
 \begin{eqnarray}\label{eq:disass2}
  \int_R |\partial_\xi v|  \le \frac{1}{16}\theta b \,,\hskip5mm\text{and\qquad}
  \int_R |\partial_\eta w| \le a N \,.
  \end{eqnarray}
Then there are $8N$ pairwise disjoint, possibly degenerate intervals $I_1,\dots, I_{8N}\subset(0,b)$ such that
\begin{equation}\label{eq:lemrect}
 \sum_{i=1}^{8N} \int_{0}^{a}\int_{I_i} v(se_\xi+te_\eta) \ds\dt \ge \frac{1}{16}\theta |R|\,,\hskip5mm\text{and\qquad}
 \sum_{i=1}^{8N} \int_{0}^{a}\int_{I_i}w(se_\xi+te_\eta) \ds\dt\ \le \frac a4 \sum_{i=1}^{8N} |I_i|\,.
\end{equation}
\end{lemma}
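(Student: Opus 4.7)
The plan is to take the $I_i$ as the connected components of a sub-level set $\{t\in(0,b):G(t)\le\alpha^*\}$, where $G(t):=\int_0^a w(se_\xi+te_\eta)\ds$ is the $\xi$-integral of $w$, for a suitably chosen threshold $\alpha^*\in(0,a/4]$. The second inequality in \eqref{eq:lemrect} will then follow directly from the definition of $\alpha^*$, while the first will come from the smallness of $\partial_\xi v$ together with the disjoint-support constraint $vw=0$.

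For the second inequality, slicing in $\eta$ gives $G\in BV(0,b)$ with total variation at most $\int_R|\partial_\eta w|\le aN$. The one-dimensional coarea (Banach indicatrix) formula then yields $\int_\R N_G(\alpha)\,d\alpha\le aN$, where $N_G(\alpha)$ counts the level crossings of $G$ at height $\alpha$. Averaging $N_G$ over $\alpha\in(0,a/4)$ and invoking Markov's inequality produces some $\alpha^*\in(0,a/4]$ for which $N_G(\alpha^*)$ is bounded by a universal multiple of $N$, so that the sub-level set $\{G\le\alpha^*\}$ decomposes into at most $8N$ intervals $I_1,\dots,I_K$, $K\le 8N$. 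Since $G\le a/4$ on $\bigcup_i I_i$,
\[
\sum_{i=1}^K\int_0^a\int_{I_i}w\,\ds\,\dt=\sum_{i=1}^K\int_{I_i}G(t)\,\dt\le \frac{a}{4}\sum_{i=1}^K|I_i|,
\]
which is the claimed second bound.

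For the first inequality I use $\partial_\xi v$. Let $F(t):=\int_0^a v(se_\xi+te_\eta)\,\ds$, and for a.e.\ $t$ let $r(t)$ be the total variation of $s\mapsto v(s,t)$ on $(0,a)$, so that $\int_0^b r\,\dt\le\int_R|\partial_\xi v|\le\theta b/16$. The elementary BV bound $|v(s,t)-F(t)/a|\le r(t)$ (from $|v(s,t)-v(s',t)|\le r(t)$ averaged in $s'$) shows that whenever $F(t)>a\,r(t)$ one has $v(s,t)>0$ for a.e.\ $s$, hence by $vw=0$ also $w(\cdot,t)\equiv 0$, so $G(t)=0$. Writing $T^*:=\{t\in(0,b):F(t)>a\,r(t)\}$, we thus have $T^*\subset\{G=0\}\subset\bigcup_i I_i$. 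Combining
\[
\int_{(T^*)^c}F(t)\,\dt\le a\int_0^b r(t)\,\dt\le\frac{\theta|R|}{16}
\]
with $\int_0^b F\,\dt=\int_R v\ge\theta|R|/4$ gives $\int_{T^*}F\ge 3\theta|R|/16$ and in particular $\sum_i\int_0^a\int_{I_i}v\ge\theta|R|/16$.

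The delicate point will be the component-counting step, i.e.\ identifying the number of connected components of the sub-level set $\{G\le\alpha^*\}$ with (half of) the level-crossing number $N_G(\alpha^*)$, and organising this via the Banach indicatrix formula for a good BV representative of $G$ on $(0,b)$. Once the constant $8N$ is seen to absorb the possible extra components arising at the endpoints $\{0,b\}$, the remainder of the argument is a direct bookkeeping: the thresholds $a/4$ and $\theta b/16$ in the hypotheses are precisely what is needed to prevent the $v$-budget on $\bigcup_i I_i$ from being consumed by the error coming from $(T^*)^c$.
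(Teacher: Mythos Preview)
Your argument is correct and essentially coincides with the paper's: both define $\bar w(t)=\int_0^a w(\cdot,t)$, use the coarea/Banach-indicatrix formula to select a threshold whose sub-level set has at most $8N$ components, and identify a ``good'' set (your $T^*$, the paper's $M$) on which the smallness of $\partial_\xi v$ forces $v(\cdot,t)>0$ a.e., hence $\bar w=0$ there by $vw=0$. The only cosmetic difference is that you compare $v(s,t)$ to its row average $F(t)/a$, whereas the paper first selects a good vertical slice $x_\xi^*$ via Fubini and compares to $v(x_\xi^*,t)$; the two variants are interchangeable (note that your final inclusion step $\int_{T^*}F\le\sum_i\int_{I_i}F$ implicitly uses $v\ge0$, as does the paper's passage from $M$ to $\bigcup_i I_i$ --- this holds in the application, where $v=\beta_\eta\varphi\ge0$).
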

\begin{proof}
For ease of notation we change coordinates for the proof, and assume $e_\xi=e_1$, $e_\eta=e_2$.
 By \eqref{eq:disass1} and Fubini, there exists $x_\xi^\ast\in (0,a)$ such that
  \begin{equation}\label{eq:65}
   \int_0^b v(x_\xi^\ast, x_\eta) \deta\ge \frac14\theta b\,.
  \end{equation}
 Define $M:=\{x_\eta\in (0,b): v(x_\xi^\ast, x_\eta) > 2 \int_{{\BZ{(0,a)\times\{x_\eta\}}}} |\partial_\xi v|\}$.
Then by \eqref{eq:disass2}
\begin{equation*}
 \int_{(0,b)\setminus M} v(x_\xi^\ast, x_\eta) \deta
 \le 2 \int_R |\partial_\xi v|\le \frac18\theta b,
\end{equation*}
and therefore by \eqref{eq:65}
\begin{equation}\label{eq:disbeta}
 \int_{M} v(x_\xi^\ast, x_\eta) \deta \ge\frac18\theta b\,.
\end{equation}
Further, for $x_\eta\in M$ one has
\begin{equation}\label{eq:disbeta2}
 v(x_\xi,x_\eta) \ge v(x_\xi^*,x_\eta)-\int_{{\SC{(0,a)\times\{x_\eta\}}}}|\partial_\xi v|  \geq \frac{1}{2}v(x_\xi^\ast,x_\eta)>0\text{\qquad for almost every $x_\xi$,}
\end{equation}
 and therefore by \eqref{eq:disbeta}
\begin{equation*}
 \int_{(0,a)\times M} v(x_\xi,x_\eta) \dx \ge
 \frac12 a \int_M v(x_\xi^\ast,x_\eta)\deta\ge 
 \frac{1}{16}\theta ab\,.
\end{equation*}
Set $\overline{w}(x_\eta):=\int_0^a w (x_\xi,x_\eta) \dxi$.
Then by the assumption $wv=0$ and \eqref{eq:disbeta2}, we have $\overline{w}=0$ in $M$, and by  \eqref{eq:disass2} 
\begin{equation*}
\int_0^b|\overline{w}'| \le aN\,.
\end{equation*}
By the coarea formula, 
$\int_{a/8}^{a/4} \calH^0(\partial\{\overline{w}\le t\}) dt\le aN$, and therefore
there is $t\in (a/8,a/4)$ such that the set
$\{\overline{w}<t\}$ is the union of at most $8N$ intervals $I_1,\dots, I_{8N}$.
For these intervals \eqref{eq:lemrect} holds since they cover $M$ and $\overline{w}\leq a/4$ on all $I_i$.
\end{proof}

 For $R\subset (0,1)^2$ {\BZ{and $(u,\beta)\in\mathcal{A}$, we set}}
 \[E_R(u,\beta):=\int_{R\times(0,1)} \left[|Du(x)-\beta(x)|^2 \dx+ \eps |\partial_\xi\beta_\eta|+\eps |\partial_\eta\beta_\xi| \right]
 +\mu \int_{R\times {\BZ{[}}(-\infty,0)\cup(1,\infty){\BZ{]}}} |Du(x)-Du_0(x)|^2\dx. \]
We fix a function $\varphi_1\in C_c^1((0,1);[0,1])$ such that $\varphi_1\equiv 1$ on $(\frac{1}{16},\frac{15}{16})$ and $\varphi_1(1-t)=\varphi_1(t)$. For $a>0$ and $x\in(0,1)$ set $\varphi_{x,a}(t):=\varphi_1((t-x)/a)$. 
 \begin{lemma}\label{lemmatest}
 There exists a constant $c>0$ with the following property: For 
 every $(u,\beta)\in\mathcal{A}$, 
 and every
 $a$, $b$, $x_a$, $x_b\in(0,1)$ for which $R:=(x_a,x_a+a)e_\xi\times (x_b,x_b+b)e_\eta \subset(0,1)^2$
 there exists ${\cm{\ell}}\in(0,{\cm 1})$ such that for all $f\in L^2((x_b,x_b+b))$ and all $g\in L^2((x_a,x_a+a))$,
 setting $R_{{\cm \ell}}=R\times\{{\cm{\ell}}\}$,
{\cm \begin{multline}\label{eq:choice1}
\left|\int_{R_{{\cm{\ell}}}} (\beta_\xi(x)-(1-\theta)) f(x_\eta)\varphi_{x_a,a}(x_\xi) \dx\right| 
\leq c  E_R^{1/2}(u,\beta)\| f\|_{L^2((x_b,x_b+b))} \left[
  \frac{1}{\mu^{1/2}}+ \frac{1}{a^{1/2}} 
\right]\,
 \end{multline}}
 and 
{\cm \begin{eqnarray}\label{eq:choice2}
\!\!\!\!\!\!\left|\int_{R_{{\cm{\ell}}}} (\beta_\eta(x)-\theta) g(x_\xi) \varphi_{x_b,b}(x_\eta)\dx\right| \le 
 c E_R^{1/2}(u,\beta)\| g\|_{L^2((x_a,x_a+a))} \left[ \frac{1}{\mu^{1/2}}
 + \frac{1}{b^{1/2}}
 \right].   \end{eqnarray}}
\end{lemma}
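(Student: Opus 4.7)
} Writing $V:=u-u_0$ so that $\beta_\xi-(1-\theta)=(\beta-Du)_\xi+\partial_\xi V$, the task reduces to controlling $(\beta-Du)_\xi$ through the elastic part of $E_R$ and testing $\partial_\xi V$ against $f\varphi$. Since $\int_0^1\|(Du-\beta)(\cdot,\cdot,x_3)\|_{L^2(R)}^2\,dx_3 \le E_R$, a Chebyshev argument provides $\ell\in(1/4,1/2)$ (independent of $f,g$, hence admissible simultaneously for \eqref{eq:choice1} and \eqref{eq:choice2}) with $\|(Du-\beta)(\cdot,\cdot,\ell)\|_{L^2(R)}^2 \le 4E_R$. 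Integration by parts in $\xi$, legitimate because $\varphi_{x_a,a}$ vanishes at the endpoints of its support, yields
$\int_{R_\ell}(\beta_\xi-(1-\theta))f\varphi\,dx_\xi\,dx_\eta = \int_{R_\ell}(\beta-Du)_\xi f\varphi\,dx_\xi\,dx_\eta - \int_R V(\cdot,\cdot,\ell)\, f\varphi'\,dx_\xi\,dx_\eta$.
Cauchy--Schwarz together with the slice estimate controls the first integral by $\|(Du-\beta)(\cdot,\cdot,\ell)\|_{L^2(R)}\|f\|_{L^2}\|\varphi\|_{L^2}\lesssim a^{1/2}E_R^{1/2}\|f\|_{L^2} \le E_R^{1/2}\|f\|_{L^2}/a^{1/2}$, using $a\le 1$ and $\|\varphi\|_{L^2}\lesssim a^{1/2}$.

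For the second integral I decompose $V(\cdot,\cdot,\ell) = V(\cdot,\cdot,1) - \int_\ell^1 \partial_3 u\,dx_3$, which is legitimate since $\partial_3 u_0\equiv 0$. Because $\beta_3=0$, the pointwise bound $|\partial_3 u|\le|Du-\beta|$ holds, so by Minkowski together with $\|\varphi'\|_{L^2}\lesssim a^{-1/2}$ the contribution of the integral piece is at most $\|Du-\beta\|_{L^2(\Omega)}\|f\|_{L^2}\|\varphi'\|_{L^2}\lesssim E_R^{1/2}\|f\|_{L^2}/a^{1/2}$. It remains to estimate $\int_R V(\cdot,\cdot,1)\, f\varphi'\,dx_\xi\,dx_\eta$, and this is the term that will produce the $\mu^{-1/2}$ factor.

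For this term I apply Lemma~\ref{lemmah12} fibrewise in $\xi$: for each fixed $x_\eta$, the smooth compactly supported function $\varphi_{x_a,a}$ lies in $H^{1/2}_P\cap H^1$ of $(x_a,x_a+a)$ with $[\varphi]_{H^{1/2}_N}$ bounded by a universal constant (by scale invariance of the one-dimensional $H^{1/2}$ seminorm), so Lemma~\ref{lemmah12} gives $\bigl|\int_{x_a}^{x_a+a} V(\xi,x_\eta,1)\varphi'(\xi)\,d\xi\bigr|\lesssim [V(\cdot,x_\eta,1)]_{H^{1/2}_N((x_a,x_a+a))}$. Multiplying by $|f(x_\eta)|$, integrating in $x_\eta$, and applying Cauchy--Schwarz bounds the cross term by $\|f\|_{L^2}\bigl(\int [V(\cdot,x_\eta,1)]_{H^{1/2}_N((x_a,x_a+a))}^2\,dx_\eta\bigr)^{1/2}$. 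The key Fubini step: using $V$ itself on $R\times(1,\infty)$ as the admissible extension of each fibre $\xi\mapsto V(\xi,x_\eta,1)$ to a half-plane, I obtain $\int [V(\cdot,x_\eta,1)]_{H^{1/2}_N((x_a,x_a+a))}^2\,dx_\eta \le \int_{R\times(1,\infty)}|DV|^2\,dx \le E_R/\mu$, which produces the contribution $\lesssim E_R^{1/2}\|f\|_{L^2}/\mu^{1/2}$. Combining the three estimates yields \eqref{eq:choice1}, and \eqref{eq:choice2} follows by the same argument with the roles of $\xi$ and $\eta$ interchanged and the outside region $R\times(1,\infty)$ used analogously (the same $\ell$ works because it was chosen before $f,g$ were specified). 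The main technical point is this Fubini passage, which is what converts the three-dimensional Dirichlet integral $\mu\int_{R\times(1,\infty)}|DV|^2$ into the anisotropic $H^{1/2}$ control along $\xi$-fibres required by Lemma~\ref{lemmah12}; obtaining $\mu^{-1/2}$ rather than the weaker $(a\mu)^{-1/2}$ that a naive Cauchy--Schwarz would produce is where the lemma is genuinely needed.
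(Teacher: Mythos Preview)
Your argument is correct and follows the same global structure as the paper (split $\beta_\xi-(1-\theta)=(\beta-Du)_\xi+\partial_\xi V$, integrate by parts in $\xi$, transport from the slice $x_3=\ell$ to the boundary via $\partial_3 u$, and control the remaining boundary term using the exterior energy). The genuine difference lies in how the boundary term $\int_{R}V(\cdot,\cdot,1)\,f\,\varphi'$ is handled. The paper does not use Lemma~\ref{lemmah12} here: it exploits the imposed symmetry $\varphi_1(1-t)=\varphi_1(t)$, so that $\partial_\xi\varphi_{x_a,a}$ is odd about the midpoint, rewrites the integral as $\frac{c}{a}\int_{R_0}|f|\,|(u-u_0)(x_\xi,x_\eta)-(u-u_0)(2x_a+a-x_\xi,x_\eta)|$, and then bounds each such difference by a line integral of $|D(u-u_0)|$ along a semicircular path into the exterior half-space $x_3<0$; a change of variables turns this into $\frac{c}{a}\|f\|_{L^2(R\times(-a,0))}\|D(u-u_0)\|_{L^2(R\times(-a,0))}$, and the factors of $a$ cancel to give $\mu^{-1/2}E_R^{1/2}\|f\|_{L^2}$. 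Your route via fibrewise $H^{1/2}$ duality plus the Fubini bound $\int[V(\cdot,x_\eta,1)]_{H^{1/2}_N}^2\,dx_\eta\le\int_{R\times(1,\infty)}|DV|^2$ achieves the same endpoint without needing the symmetry of $\varphi_1$; the paper's version is more elementary but is precisely why that symmetry condition was built into the definition of $\varphi_1$. Two small remarks: in your $\partial_3 u$ estimate you wrote $\|Du-\beta\|_{L^2(\Omega)}$ where you mean $\|Du-\beta\|_{L^2(R\times(\ell,1))}$ (otherwise you would only get $E^{1/2}$, not $E_R^{1/2}$); and your choice of $\ell$ controls only the elastic slice energy, whereas the paper chooses $\ell$ so that the full slice energy $E_{R,\ell}\le E_R$ --- this does not matter for the present lemma, but the same $\ell$ is reused in Lemma~\ref{lem:dislb}, which also needs the $\eps|\partial_\xi\beta_\eta|$ and $\eps|\partial_\eta\beta_\xi|$ bounds on the slice.
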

\begin{proof}
Let $(u,\beta)\in\mathcal{A}$. 
Choose ${\cm{\ell}}\in(0,{\cm 1})$ such that {\cm $E_{R,{{\cm{\ell}}}}\leq E_R$}, where 
\[E_{R,{{\cm{\ell}}}}(u,\beta):=\int_{R\times\{{\cm{\ell}}\}}\left[|Du(x)-\beta(x)|^2 \dx+ \eps |\partial_\xi\beta_\eta|+\eps |\partial_\eta\beta_\xi| \right].\]
Since $\partial_\xi u_0=1-\theta$, we have, integrating by parts,
\begin{align*}
 &\int_{R_{{\cm{\ell}}}} (\beta_\xi(x)-(1-\theta)) \varphi_{x_a,a}(x_\xi) f(x_\eta)\dx\\
 =&
\int_{R_{{\cm{\ell}}}} (\partial_\xi u(x)-\partial_\xi u_0(x)) \varphi_{x_a,a}(x_\xi)f(x_\eta)\dx +
\int_{R_{{\cm{\ell}}}} (\beta_\xi(x)-\partial_\xi u(x)) \varphi_{x_a,a}(x_\xi)f(x_\eta)\dx \\
 =&-
\int_{R_{{\cm{\ell}}}}  (u(x)-u_0(x)) \partial_\xi\varphi_{x_a,a}(x_\xi)f(x_\eta)\dx +
\int_{R_{{\cm{\ell}}}} (\beta_\xi(x)-\partial_\xi u(x)) \varphi_{x_a,a}(x_\xi)f(x_\eta)\dx \\
=&-
\int_{R_0}  (u(x)-u_0(x)) \partial_\xi\varphi_{x_a,a}(x_\xi)f(x_\eta) \dx-
\int_{R\times(0,{\cm{\ell}})} (\partial_3 u(x)) (\partial_\xi\varphi_{x_a,a}(x_\xi))f(x_\eta)\dx\\
&+
\int_{R_{{\cm{\ell}}}} (\beta_\xi(x)-\partial_\xi u(x)) \varphi_{x_a,a}(x_\xi)f(x_\eta) \dx\\
\le &
-\int_{R_0}  (u(x)-u_0(x)) \partial_\xi\varphi_{x_a,a}(x_\xi)f(x_\eta)\dx
+{\cm ({E_R})^{1/2}   \| \partial_\xi\varphi_{x_a,a}f\|_{L^2(R)}
+E_R^{1/2} \| \varphi_{x_a,a} f\|_{L^2(R)}.}
\end{align*}
 By definition of $\varphi_{x_a,a}$, we have the estimates 
$ \| \varphi_{x_a,a}'f\|_{L^2(R)}\le c a^{-1/2} \|f\|_{L^2((x_b,x_b+b))}$ and
$ \| \varphi_{x_a,a}f\|_{L^2(R)}\le c a^{1/2} \|f\|_{L^2((x_b,x_b+b))}$. 
It remains to estimate the first term of the right-hand side.
Since $|\partial_\xi\varphi_{x_a,a}|\leq c/a$, and since by symmetry of $\varphi_{x_a,a}$ we have $\partial_\xi\varphi_{x_a,a}(x_\xi)=-\partial_\xi\varphi_{x_a,a}(2x_a+a-x_\xi)$ for all $x_\xi\in(x_a,x_a+a)$, we obtain
(writing for brevity $(x_\xi,x_\eta)$ instead of $x_\xi e_\xi+x_\eta e_\eta$)
 \begin{align*}
  \int_{\BZ{R_0}} f(x_\eta) \partial_\xi \varphi_{x_a,a}(x_\xi) &(u-u_0)(x_\xi,x_\eta) \dx\le
  \int_{\BZ{R_0}} |f|  \frac{c}{a}\left| (u-u_0)(x_\xi, x_\eta)-(u-u_0)(2x_a+a-x_\xi, x_\eta)\right| \dx \\
  \le& \frac ca\int_{R_0}  |f|\int_{\{y: |y_3|+|y_\xi-x_a-a/2|=|x_\xi-x_a-a/2|, y_\eta=x_\eta,y_3\le0\}}|D(u-u_0){\BZ{(y)}}| d\calH^1(y) 
  \dx\\
    \le&\frac ca \|f\|_{L^2(R\times(-a,0))}\|D(u-u_0)\|_{L^2(R\times(-a,0))},
     \end{align*}
     which implies that (recall that $f$ depends only on $x_\eta$)
     \begin{eqnarray}
     \label{eq:estnegsob}
       \int_{{\BZ{R_0}}} f(x_\eta) \partial_\xi \varphi_{x_a,a}(x_\xi) (u-u_0)(x_\xi,x_\eta)\dx &\le&\frac{c}{\mu^{1/2}}\|f\|_{L^2((x_b,x_b+b))}E_R^{1/2}.
     \end{eqnarray}
 Putting things together {\cm and using that $a<1$}, {\cm we obtain} the first assertion \eqref{eq:choice1}. The second one follows similarly by performing the same computation in the other direction and using that $\partial_\eta u_0=\theta$.
\end{proof}
\begin{corollary}\label{cor:dischoice}
 Under the assumptions of Lemma \ref{lemmatest}, if $E_R(u,\beta)\le c|R|E(u,\beta)$, then
at least one of
\begin{equation}\label{eq:cor1}
{\cm  E(u,\beta)\gtrsim \theta^2 \min\left\{ b\mu,\,b^2
 \right\}}
\end{equation}
and
 \begin{equation}
  \int_{R_{{\cm{\ell}}}} \beta_\eta(x) \varphi_{x_b,b}(x_\eta)\dx\ge \frac14\theta |R|
 \end{equation}
holds.
\end{corollary}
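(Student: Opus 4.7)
The plan is to apply Lemma \ref{lemmatest} with the test function $g \equiv 1$ on $(x_a, x_a+a)$, on the $\eta$-component estimate \eqref{eq:choice2}. This produces the slice parameter $\ell \in (0,1)$ at which the trace bound holds, and we will use this same $\ell$ for the corollary.

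First I would compute the ``main term'' obtained by replacing $\beta_\eta$ with the constant $\theta$. Since $\varphi_{x_b,b}$ depends only on $x_\eta$ and since $\varphi_1 \equiv 1$ on $(1/16, 15/16)$ with $\varphi_1\geq 0$ everywhere, one has
\[
\theta \int_{R_\ell} \varphi_{x_b,b}(x_\eta) \dx \;=\; \theta a \int_{x_b}^{x_b+b}\varphi_{x_b,b}(x_\eta)\,dx_\eta \;\geq\; \frac{7}{8}\theta |R|.
\]
Second, with $g\equiv 1$ one has $\|g\|_{L^2((x_a,x_a+a))}=a^{1/2}$, so that \eqref{eq:choice2} gives
\[
\Bigl|\int_{R_\ell}(\beta_\eta-\theta)\varphi_{x_b,b}(x_\eta)\dx\Bigr|
\;\leq\; c\, E_R^{1/2}(u,\beta)\, a^{1/2}\Bigl[\frac{1}{\mu^{1/2}}+\frac{1}{b^{1/2}}\Bigr].
\]
Combining these two displays by the triangle inequality yields
\[
\int_{R_\ell}\beta_\eta(x)\varphi_{x_b,b}(x_\eta)\dx \;\geq\; \frac{7}{8}\theta|R| \;-\; c\,E_R^{1/2}(u,\beta)\,a^{1/2}\Bigl[\frac{1}{\mu^{1/2}}+\frac{1}{b^{1/2}}\Bigr].
\]

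The conclusion is then a clean dichotomy. Either the error term is smaller than $\tfrac{5}{8}\theta|R|$, in which case the second alternative $\int_{R_\ell}\beta_\eta\varphi_{x_b,b}\dx \geq \tfrac14\theta|R|$ holds directly; or else at least one of
\[
E_R^{1/2}(u,\beta)\,\frac{a^{1/2}}{\mu^{1/2}}\;\gtrsim\;\theta|R|,\qquad
E_R^{1/2}(u,\beta)\,\frac{a^{1/2}}{b^{1/2}}\;\gtrsim\;\theta|R|
\]
is satisfied. Squaring and using $|R|=ab$ gives $E_R(u,\beta)\gtrsim \theta^2 \mu\, b\, |R|$ or $E_R(u,\beta)\gtrsim \theta^2 b^2 |R|$ respectively. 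Finally the localization hypothesis $E_R(u,\beta)\leq c|R|E(u,\beta)$ upgrades this to $E(u,\beta)\gtrsim \theta^2\min\{b\mu,\, b^2\}$, which is \eqref{eq:cor1}.

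The only slightly delicate point is keeping track of the factors of $a$ and $b$ and making sure that the two terms in the bracket of \eqref{eq:choice2} correspond exactly to the two alternatives $b\mu$ and $b^2$ in \eqref{eq:cor1}; the asymmetry between $a$ and $b$ comes in through $\|g\|_{L^2}=a^{1/2}$ versus the $b^{-1/2}$ appearing in the trace estimate. Everything else is essentially bookkeeping.
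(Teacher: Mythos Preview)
Your proof is correct and follows exactly the same approach as the paper: apply \eqref{eq:choice2} with $g\equiv 1$, use $\int_{R_\ell}\varphi_{x_b,b}\ge \tfrac78|R|$ for the main term, and split into the dichotomy that either the error absorbs into the $\tfrac14\theta|R|$ bound or squaring and dividing by $|R|$ via the localization hypothesis gives \eqref{eq:cor1}. The paper's version is just a terser write-up of the same computation.
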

\begin{proof}
Set $g:=1$. Then $\|g\|_{L^2((x_a,x_a+a))}=  a^{1/2}$ and $\int_{R_{{\cm{\ell}}}}g(x_\xi)\varphi_{x_b,b}(x_\xi)\dx>\frac78 ab$.
Thus we have $\int_{R_{{\cm{\ell}}}}\beta_\eta(x)\varphi_{x_b,b}(x_\eta)\dx\geq \frac14 \theta ab$, or 
by \eqref{eq:choice2}
\begin{equation}
 {\cm \theta ab \lesssim E_R^{1/2}(u,\beta) a^{1/2} \left[ \frac{1}{\mu^{1/2}}
 + \frac{1}{b^{1/2}}\right].}
\end{equation}
Rearranging terms and using the assumption $E_R(u,\beta)\le c|R|E(u,\beta)$ yields {\BZ{\eqref{eq:cor1}}}.
\end{proof}
We now use the previous estimates to obtain a lower bound on $E$ similarly to Lemma \ref{lemma1} in the martensite case.
\def\no{Set $f:=1$. Then $\|f\|_{L^2((0,b))}=  b^{1/2}$ and $\int_{R_{{\cm{\ell}}}}f(x_\eta)\varphi_{x_a,a}(x_\xi)\dx>\frac78 ab$.
Thus we have $\int_{R_{{\cm{\ell}}}}\beta_\xi(x)\varphi_{x_a,a}(x_\xi)\dx\geq \frac14 ab$, or 
by \eqref{eq:choice1}, using that $1-\theta\geq 1/2$,
\[ab\leq c E_{R}^{1/2}(u,\beta) b^{1/2}\left[
  \frac{1}{\mu^{1/2}}+ (\frac{\ell}{a})^{1/2} 
+(\frac{a}\ell)^{1/2}\right]. \]
Analogously, using \eqref{eq:choice2} with $g:=1$, we have $\int_{R_{{\cm{\ell}}}} \beta_\eta(x) \varphi_{x_b,b}(x_\eta)\dx\ge \frac{1}{4} \theta |R|$, or
\begin{equation}
 \theta ab \le
 c E_R^{1/2}(u,\beta) a^{1/2} \left[ \frac{1}{\mu^{1/2}}
 + (\frac{\ell}b)^{1/2} +(\frac{b}\ell)^{1/2}\right].
\end{equation}
Rearranging terms and using the assumption $E_R(u,\beta)\le c|R|E(u,\beta)$ yields the assertion.}

  \begin{lemma}\label{lem:dislb}
   There are $m_0\in(0,1)$ and $c>0$ with the following property: For all $\eps,\mu>0$, {\BZ{all }}$\theta\in(0,1/2]$ and all $m\in(0,1)$ such that
\begin{equation}\label{lemma1casesdisl}
\text{ at least one of }\hskip1cm \theta\le m\le m_0 \hskip1cm\text{ or }\hskip1cm   m_0=m\le\theta 
\end{equation}
holds, {\cm and for all $\lambda\in (0,1]$}, one has
   \begin{eqnarray}\label{eq:lbdisloc}
   {\cm  \inf E(u,\beta)\geq c\theta^2\min\left\{\mu,\,\frac{\hat{\eps}}{\lambda},\,\lambda^2 m,\,\mu\lambda\ln\frac{1}{m},
 \frac{\lambda m\mu}{\theta}
    \right\}.}
   \end{eqnarray}
\end{lemma}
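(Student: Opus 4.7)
I would mimic the argument of Lemma~\ref{lemma1}, using Corollary~\ref{cor:dischoice} and Lemma~\ref{lemmarect} to carry out the localization in the two transverse directions and then running the logarithmic testing argument of Step~2 of that proof on the selected slab.

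Given $(u,\beta)\in\mathcal A$, I fix the unit rectangle $R=(0,1)^2$ (so $E_R\le E$) and apply Corollary~\ref{cor:dischoice}: either one of the alternatives in \eqref{eq:cor1} holds, giving the $\mu$ entry of the minimum directly, or there exists $\ell\in(0,1)$ such that $\int_{R_\ell}\beta_\eta(x)\varphi_{x_b,b}(x_\eta)\dx\ge \tfrac14\theta|R|$. On this two-dimensional slab I apply Lemma~\ref{lemmarect} with $v=\beta_\eta|_{R_\ell}$, $w=\beta_\xi|_{R_\ell}$ and $N\sim 1/\lambda$; a Fubini argument on the line-tension part of $E$, together with a good choice of $\ell$, forces the curl hypotheses of the lemma to be satisfied at the cost of either picking up the contribution $E\gtrsim \eps\theta/\lambda$ (which is the $\theta^2\hat\eps/\lambda$ entry) or producing $8N$ intervals $I_i=(y_i-g_i,y_i+g_i)\subset(x_b,x_b+b)$ on which $\beta_\eta$ concentrates a mass of order $\theta|R|$ and $\beta_\xi$ is averaged out.

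The main step is to run Step~2 of Lemma~\ref{lemma1} on the resulting one-dimensional set $\cup_i I_i$. I take
\[ \psi(t) := \max_i\Bigl[\ln\tfrac{1}{6m} - \bigl(\ln\tfrac{|t-y_i|}{g_i}\bigr)_+\Bigr]_+, \]
which, provided $m\le m_0$ is chosen small enough, is supported inside $(x_b,x_b+b)$, is bounded below by $\ln\tfrac{1}{6m}$ on $\cup_i I_i$, and satisfies $\|\psi\|_{L^1}\lesssim 1$, $\|\psi'\|_{L^2}^2\lesssim 1/(\lambda m)$ and $[\psi]_{H^{1/2}}^2\lesssim \lambda^{-1}\ln(1/m)$ by exactly the computations leading to \eqref{eq:psiL1}, \eqref{eq:psi'L2} and \eqref{eq:psiH1/2P}. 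Following the same integration by parts that proves \eqref{eq:choice2}, with the bump $\varphi_{x_b,b}$ replaced by $\psi$ and $g\equiv 1$, and using Lemma~\ref{lem:H1/2average} to average out the $x_\xi$-variable before invoking the one-dimensional interpolation \eqref{eq:interpolFourier2}, I expect an estimate of the form
\[ \tfrac{1}{16}\theta ab\,\ln\tfrac{1}{6m} \le \int_{R_\ell}\beta_\eta(x)\psi(x_\eta)\dx \lesssim \theta\|\psi\|_{L^1}+E^{1/2}\Bigl(\|\psi'\|_{L^2}+\mu^{-1/2}[\psi]_{H^{1/2}}\Bigr). \]
Absorbing the first term on the right into the left-hand side (this is how $m_0$ is fixed) and squaring then produces the $\lambda^2 m$ and $\mu\lambda\ln(1/m)$ entries of the minimum; the remaining $\lambda m\mu/\theta$ entry is picked up from the residual $\theta\|\psi\|_{L^1}$ contribution in the regime $m\sim\theta$.

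For the case $m=m_0\le\theta$, which corresponds to $\theta\sim 1$, the same outline applies but with the distance-to-boundary test function $\psi(t)=\max_i(g_i-|t-y_i|)_+$ from Step~3 of Lemma~\ref{lemma1}, whose $L^1$, $L^2$ and $H^{1/2}$ estimates in terms of $\{g_i\}$ are elementary. The main technical obstacle I foresee is to push the trace argument in the third (dislocation) direction through with the sharp $\mu^{-1/2}$ dependence even though $\psi$ has a logarithmic singularity at each $y_i$: the proof of \eqref{eq:choice2} exploits an antisymmetry of $\partial_\xi\varphi_{x_a,a}$ which $\psi$ does not share, so this will have to be replaced by a direct $H^{1/2}$-trace estimate on the two caps of $\R^3\setminus\Omega$, combined with Lemma~\ref{lem:H1/2average} to pay for the extra dimension and retain the correct scaling in $\mu$ and~$\lambda$.
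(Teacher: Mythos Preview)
Your overall strategy—localize, apply Corollary~\ref{cor:dischoice} and Lemma~\ref{lemmarect}, then test against the logarithmic~$\psi$—is the paper's, and your resolution of the ``technical obstacle'' via Lemma~\ref{lem:H1/2average} together with the one–dimensional interpolation~\eqref{eq:interpolFourier2} is exactly what the paper does for the boundary term. Two genuine gaps remain, however.

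First, your choice $R=(0,1)^2$ is too coarse. The paper takes a \emph{thin} rectangle in the $e_\xi$ direction, $a=\lambda\theta/4$, $b\sim 1$, positioned so that $E_{\tilde R}\lesssim |\tilde R|\,E\sim a\,E$. This localization is what turns the slice-level surface–energy bound into the correct $\hat\eps/\lambda$ entry: if the first hypothesis of Lemma~\ref{lemmarect} fails one only learns $E_{R,\ell}\gtrsim\eps\theta b$, and the gain $E\gtrsim E_{R,\ell}/(ab)\sim\eps/\lambda=\theta^2\hat\eps/\lambda$ comes precisely from $a\sim\lambda\theta$. With $a=1$ the same failure gives only $E\gtrsim\eps\theta=\theta^3\hat\eps$, which is strictly weaker and can become the minimum. (Incidentally, a rotated rectangle with $a=b=1$ does not even fit inside $(0,1)^2$.)

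Second, you skip a step between Lemma~\ref{lemmarect} and the $\psi$–testing. After obtaining the intervals $I_i$ with $\omega=\cup_i I_i$, the paper first tests $\beta_\xi$ against $f=\chi_\omega$ in~\eqref{eq:choice1}; combined with the second conclusion of~\eqref{eq:lemrect} this forces, whenever $|\omega|>mb$, the estimate $E\gtrsim\min\{am\mu,\,a^2m\}$, which with $a=\lambda\theta/4$ is exactly the source of the $\lambda m\mu/\theta$ and $\lambda^2 m$ entries. Only in the complementary case $|\omega|\le mb$ does one build~$\psi$, and that bound on $|\omega|$ is also what guarantees $\sum_i g_i\lesssim mb$, hence that $\psi$ has compact support and satisfies the norm estimates. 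Your proposed mechanism for the $\lambda m\mu/\theta$ term—the residual $\theta\|\psi\|_{L^1}$—cannot work: that term is absorbed into the left-hand side by the choice of $m_0$ and yields no lower bound.

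Two smaller points: with $8N\sim 1/\lambda$ intervals and $g_i\ge mb/N\sim m\lambda$ one gets $\|\psi'\|_{L^2}^2\lesssim N^2/(mb)\sim 1/(m\lambda^2)$, not $1/(\lambda m)$; and no separate Step~3 argument is needed for $m=m_0\le\theta$, since then $\theta\sim m\sim 1$ and the same proof goes through.
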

\begin{proof}
\begin{figure}
\centerline{ \includegraphics[height=5cm]{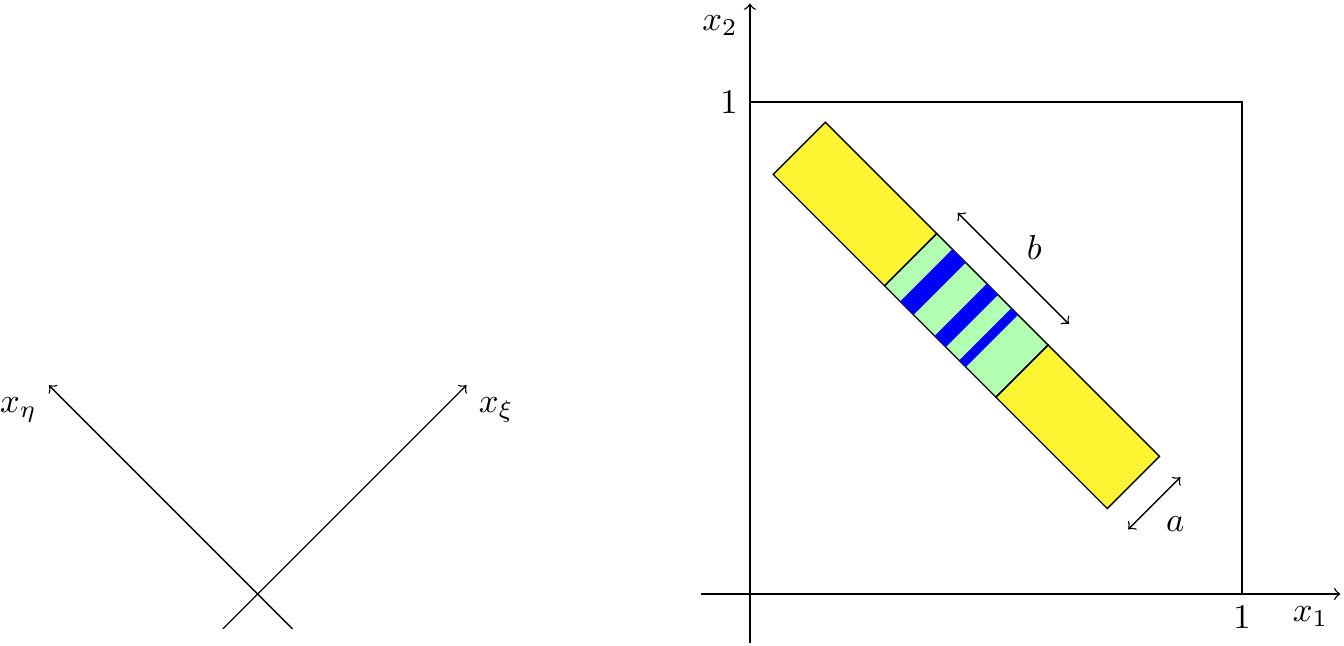}}
 \caption{Left: sketch of the geometry in the proof of Lemma 
 \ref{lem:dislb}. 
 The marked rectangle is $\tilde R$, the inner, light green rectangle is $R$.
 The blue region is $\omega\times(x_a,x_a+a)$ where $\beta_\eta$ concentrates.}
 \label{figlambdislr}
\end{figure}
\emph{Step 1. Preliminaries.}
Let $\eps$, $\mu$, $\theta$, $m$, and $\lambda$ be given with the properties stated in the lemma. It suffices to consider an arbitrary pair $(u,\beta)\in\mathcal{A}$. We use the short-hand notation $E:=E(u,\beta)$, and similarly for {\BZ{$E_R$ and $E_{R,\ell}$}}. We introduce auxiliary parameters $a$, $b$ and $N$. Precisely, fix 
$b\sim 1$, $b\in (0,1/3]$ ($b=1/3$ will do), let $N\in\N$ be such that $\lambda\sim b/N$, and set $a:=\lambda\theta/4$. Choose $x_a$, $x_b\in(0,1)$ such that $\tilde{R}:=(x_a,x_a+a)e_\xi\times (x_b-b,x_b+2b)e_\eta\subset(0,1)^2$ and $E_{\tilde{R}}\lesssim |\tilde{R}|E\lesssim ab E$ (see Figure \ref{figlambdislr}).
Let $R:=(x_a,x_a+a)e_\xi\times(x_b,x_b+b)e_\eta\subset\tilde{R}$. Finally pick ${\cm {\cm{\ell}}\in(0,1)}$ as in Lemma \ref{lemmatest}, i.e., 
such that {\cm $E_{\tilde{R},{{\cm{\ell}}}}\leq E_{\tilde{R}}$}.  \\
We apply Corollary \ref{cor:dischoice} to $R$ which shows that if \eqref{eq:cor1} does not hold then \eqref{eq:disass1} holds with $v:=\beta_\eta\varphi_{x_b,b}$ on $R_{{\cm{\ell}}}$. Since
\[\int_{R_{{\cm{\ell}}}}|\partial_\xi\beta_\eta\varphi_{x_b,b}|\leq\int_{R_{{\cm{\ell}}}}|\partial_\xi\beta_\eta|\leq\frac{1}{\eps}E_{R,{{\cm{\ell}}}}\text{\qquad and\qquad}\int_{R_{{\cm{\ell}}}}|\partial_\eta\beta_\xi|
\leq\frac{1}{\eps}E_{R,{{\cm{\ell}}}}, \]
we deduce from 
 Lemma \ref{lemmarect}, with $w:=\beta_\xi$,  that we have  
\begin{equation}\label{eq:disest1}
 {\cm E\gtrsim \min\left\{ 
 \theta^2b\mu,\,\theta^2b^2,\, \frac{\eps \theta}{a}, \frac{\eps N}{b}
 \right\}}
\end{equation}
or there are disjoint intervals $I_1,\dots, I_{8N}\subset (x_b,x_b+b)$ such that \eqref{eq:lemrect} holds with the above choices. 
If (\ref{eq:disest1}) holds, the proof is concluded. 
Suppose now that the other option holds, and set $\omega:=\cup I_i\subset\R$ and $f:=\chi_\omega$.
 Then the second estimate of \eqref{eq:lemrect} yields, since $0\le\varphi_{x_a,a}\le 1$,
\begin{equation*}
\int_{R_{{\cm{\ell}}}} \beta_\xi(x) f(x_\eta)\varphi_{x_a,a}(x_\xi) \dx\le \frac14 a |\omega|\,.
\end{equation*}
By Lemma \ref{lemmatest}, since $(1-\theta)\int_{R_{{\cm \ell}}} f(x_\eta) 
\varphi_{x_a,a}(x_\xi){\BZ{\dx}} \ge \frac12 |\omega|\frac78a$,
\begin{align*}
 a|\omega|\lesssim
{\cm |\omega|^{1/2} (E_R)^{1/2} \left[\frac{1}{\mu^{1/2}}+\frac{1}{a^{1/2}}\right].}
\end{align*}
Therefore, if $|\omega|> mb$ then 
\begin{equation}\label{eq:disest2}
 {\cm E\gtrsim\min\{ am\mu,\, a^2m \}.}
\end{equation}
If (\ref{eq:disest2}) holds, then the proof is concluded 
(in proving $\theta^2 \ell \lesssim m\ell$
we use that (\ref{lemma1casesdisl}) implies $\theta\lesssim m$).
If instead $|\omega|\le mb$, 
we proceed along the lines of the proof of Lemma \ref{lemma1}.

\emph{Step 2. Construction of the test function and conclusion of the proof.}
Let $\hat I_i := B_{mb/N}(I_i)=(y_i-g_i,y_i+g_i)$, and set for $i=1,\dots,8N$,
 \begin{equation}
  \psi(x_\eta):=\max_i \psi_i(x_\eta-y_i)\,,\hskip1cm
  \psi_i(t):=\left[ \ln \frac 1{6m} - \left(\ln \frac{|t|}{g_i}\right)_+ \right]_+\,.
\end{equation}
Note that $\psi$ has compact support in $(x_b-b,x_b+2b)$,  $\psi=\ln\frac{1}{6m}$ in $\omega$,
$g_i\ge {\BZ{(mb)/N}}$, and $\sum_i g_i\le mb+|\omega|\le 2 mb$. Then by the first of \eqref{eq:lemrect} and 
%
since $\beta_\eta$, $\psi$ and $\varphi_1$ are non-negative, we have, using $\partial_\eta u_0=\theta$,
\begin{align*}
&\theta|R| \ln \frac{1}{\BZ{6}m} \lesssim
 \int_{\tilde{R}_{{\cm{\ell}}}} \beta_\eta(x) \varphi_{x_b,b}(x_\eta)\psi(x_\eta)\dx
\le
 \int_{\tilde{R}_{{\cm{\ell}}}} \beta_\eta(x) \psi(x_\eta)\dx
 \\
=&\int_{\tilde{R}_{{\cm{\ell}} }}\psi(x_\eta)(\beta_\eta(x)-\partial_\eta u(x))\dx-\int_{\tilde{R}_0}\psi'(x_\eta)(u(x)-u_0(x))\dx-\int_{\tilde{R}\times(0,{\cm{\ell}})}\partial_3 u(x)\psi'(x_\eta)\dx+ \int_{\tilde{R}}\psi(x_\eta)\theta\dx\\
\lesssim&{\cm E_R^{1/2} \|\psi\|_{L^2(\tilde R)}+\int_{\tilde{R}_0}(u_0(x)-u(x))\psi'(x_\eta)\dx+E_R^{1/2}
\|\psi'\|_{L^2(\tilde{R}\times(0,1))} +  \theta \|\psi\|_{L^1(\tilde R)} \,.}
\end{align*}
By Lemma \ref{lemmah12} and Lemma \ref{lem:H1/2average}, we have
\begin{align*}
\int_{\tilde{R}_0}(u_0(x)-u(x))\psi'(x_\eta)\dx\lesssim&[\psi]_{H^{1/2}_N((x_b-b,x_b+2b))}\left[\int_{x_a}^{x_a+a}(u_0-u)(x_\xi,\cdot)\dx_\xi\right]_{H^{1/2}_N((x_b-b,x_b+2b))}\\
\lesssim& a^{1/2}[\psi]_{H^{1/2}_N((x_b-b,x_b+2b))}[u_0-u]_{H^{1/2}_N(\tilde{R})}.
\end{align*}
We use that (cf. Step 2 in the proof of Lemma \ref{lemma1}) $\|\psi\|_{L^2}\lesssim b^{1/2}$, $[\psi]_{H^{1/2}_N}\lesssim N^{1/2}\ln^{1/2}\frac{1}{{\BZ{6}}m}$, $\|\psi'\|_{L^2}\lesssim N/(mb)^{1/2}$ and $\|\psi\|_{L^1}\lesssim b$, 
where the norms are taken on $(x_b-b,x_b+2b)$. Hence,
\[\theta ab\ln\frac{1}{{\BZ{6}}m}\leq c E_R^{1/2}{\cm \left(\left(ab\right)^{1/2}+
\left(\frac{aN\ln\frac{1}{{\BZ{6}}m}}\mu\right)^{1/2}+N\left(\frac{a}{mb}\right)^{1/2}\right)}+c\theta ab. \]
If $m_0$ is small enough, then the last term can be absorbed into the left-hand side {\BZ{and $\ln\frac{1}{6m}\gtrsim\ln\frac{1}{m}$}}. Hence,
\begin{equation}\label{eq:disest3}
{\cm E\gtrsim \theta^2\min\big\{\ln^2\frac{1}{m},\,\frac{\mu b\ln\frac{1}{m}}{N}, \frac{mb^2\ln^2\frac{1}{m}}{ N^2}\big\}=\theta^2\min\big\{\frac{\mu b\ln\frac{1}{m}}{N}, \frac{mb^2\ln^2\frac{1}{m}}{ N^2}\big\}.}
\end{equation}
Recalling $b\sim 1$, $N\sim 1/\lambda$, $m/\theta\gtrsim 1$, and $\ln 1/m\gtrsim 1$ the proof is concluded.
%
%
%
%

\end{proof}
Proceeding along the lines of the proof of Proposition \ref{prop:KMlb}, 
we now conclude the proof of the lower bound.
\begin{proposition}\label{prop:disloclb}
 There is a constant $c>0$ such that for all $\eps,\mu>0$, and all $0<\theta\le1/2$, we have
 \[\inf E(u,\beta)\ge c\theta^2\min\left\{\hat{\eps}^{2/3},\,(\hat{\eps} \mu)^{1/2}\ln^{1/2}\left(3+\frac{\hat{\eps}}{\mu^3 }\right),\,\left(\hat{\eps} \mu\right)^{1/2}\ln^{1/2}\left(\frac{1}{\theta}\right),\, \mu,\ 1\right\}.\]
\end{proposition}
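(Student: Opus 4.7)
The plan is to mimic the case analysis used in the proof of Proposition \ref{prop:KMlb}, applying Lemma \ref{lem:dislb} in each case. First I reduce to $L=1$ by the rescaling recalled at the beginning of Section \ref{sec:disloc}. Fix any $(u,\beta)\in\mathcal A$; I will choose parameters $\lambda\in(0,1]$ and $m\in(0,m_0]$ satisfying \eqref{lemma1casesdisl} so that Lemma \ref{lem:dislb} delivers a bound matching the relevant term of the target
\[f(\hat\eps,\mu,\theta):=\min\!\left\{\hat\eps^{2/3},\,(\hat\eps\mu)^{1/2}\ln^{1/2}(3+\tfrac{\hat\eps}{\mu^3}),\,(\hat\eps\mu)^{1/2}\ln^{1/2}(\tfrac1\theta),\,\mu,\,1\right\}.\]

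I distinguish three regimes. If $\hat\eps\le\min\{1,\mu^3\}$, I take $m:=m_0$ and $\lambda:=\hat\eps^{1/3}$, so that $\hat\eps/\lambda\sim\lambda^2 m\sim\hat\eps^{2/3}$; using $\mu\ge\hat\eps^{1/3}\ge\hat\eps^{2/3}$ to control the remaining three terms in \eqref{eq:lbdisloc} (the bound on $\lambda m\mu/\theta$ uses $m\gtrsim\theta$ in both subcases of \eqref{lemma1casesdisl}), one gets $E(u,\beta)\gtrsim\theta^2\hat\eps^{2/3}$. If $1\le\hat\eps\le\mu^3$, then $\mu\ge 1$ and the choice $m:=m_0$, $\lambda:=1$ makes every term of \eqref{eq:lbdisloc} $\gtrsim 1$, yielding $E(u,\beta)\gtrsim\theta^2$.

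The remaining regime $\hat\eps>\mu^3$ is the delicate one. Following the KM proof (cf.\ \eqref{eqdefm}), I set
\[m:=\max\Big\{\min\{m_0,\theta\},\ m_0\tfrac{\mu^{3/2}}{\hat\eps^{1/2}}\Big\},\]
which satisfies \eqref{lemma1casesdisl} and gives $\ln(1/m)\sim\min\{\ln(1/\theta),\ln(3+\hat\eps/\mu^3)\}$. If $\hat\eps\le\mu\ln(1/m)$, I set $\lambda:=(\hat\eps/(\mu\ln(1/m)))^{1/2}\in(0,1]$, so that $\hat\eps/\lambda=\mu\lambda\ln(1/m)=(\hat\eps\mu\ln(1/m))^{1/2}$. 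If instead $\hat\eps>\mu\ln(1/m)$, I set $\lambda:=1$ and the bound simplifies to $E(u,\beta)\gtrsim\theta^2\mu$. Patching the three regimes together produces the target $\theta^2 f(\hat\eps,\mu,\theta)$.

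The main obstacle is the verification in the first subcase of the regime $\hat\eps>\mu^3$, namely that the two ``extra'' bounds $\lambda^2 m$ and $\lambda m\mu/\theta$ in \eqref{eq:lbdisloc} do not bind below $\min\{(\hat\eps\mu\ln(1/m))^{1/2},\mu\}$. Compared with the proof of Proposition \ref{prop:KMlb}, Lemma \ref{lem:dislb} only gives $\lambda^2 m$ in place of KM's stronger $(\lambda^2 m/\ell)\ln^2(1/m)$, so the lower bound $m\gtrsim m_0\mu^{3/2}/\hat\eps^{1/2}$ coming from the definition of $m$ must be exploited carefully to ensure $\lambda^2 m\gtrsim(\hat\eps\mu)^{1/2}\ln^{1/2}(1/m)$ exactly in the regime where this quantity is the operative target; the new term $\lambda m\mu/\theta$ is controlled by observing that $m/\theta\gtrsim 1$ in both subcases of \eqref{lemma1casesdisl}, so it dominates $\lambda\mu$, which in turn exceeds the target whenever the target is smaller than~$\mu$.
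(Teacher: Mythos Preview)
Your case analysis mirrors the paper's, and your handling of cases (i) and (ii) is fine. The genuine gap is in case (iii)(a), precisely at the point you flag as the ``main obstacle''. You copy the choice of $m$ from the Kohn--M\"uller proof (equation \eqref{eqdefm}), but Lemma~\ref{lem:dislb} differs from Lemma~\ref{lemma1} in two respects that make this choice too weak: the term $\lambda^2 m$ has \emph{no} logarithmic factor (Lemma~\ref{lemma1} gave $\lambda^2 m(\ln\frac1m)^2/\ell$), and there is the new term $\lambda m\mu/\theta$. Concretely, with $\lambda^2=\hat\eps/(\mu\ln\frac1m)$ and your bound $m\ge m_0\mu^{3/2}/\hat\eps^{1/2}$ one obtains only
\[
\lambda^2 m \;\ge\; \frac{\hat\eps}{\mu\ln\frac1m}\cdot \frac{m_0\mu^{3/2}}{\hat\eps^{1/2}}
\;=\; \frac{m_0(\hat\eps\mu)^{1/2}}{\ln\frac1m},
\]
which misses the target $(\hat\eps\mu)^{1/2}\ln^{1/2}\frac1m$ by a factor $(\ln\frac1m)^{3/2}$, and in subcase~(a) it is also smaller than each of $\mu,1,\hat\eps^{2/3}$ in general (take e.g.\ $\hat\eps=\mu^2$ with $\mu$ small and $\theta$ tiny). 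Similarly, your control of $\lambda m\mu/\theta$ via $m/\theta\gtrsim1$ gives only $\lambda\mu=(\hat\eps\mu)^{1/2}/\ln^{1/2}\frac1m$, again a factor $\ln\frac1m$ short of the target; your sentence ``$\lambda\mu$ exceeds the target whenever the target is smaller than~$\mu$'' is simply false.

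The paper repairs both issues by modifying the definition of $m$, see \eqref{eqdefm2}: the second alternative is taken with exponent $1/4$ instead of $1/2$, and the first alternative uses $\theta\ln\frac2\theta$ instead of $\theta$. The first change gives $m^2\gtrsim(\mu^3/\hat\eps)^{1/2}$, and combined with the elementary inequality $m\ge m^2(\ln\frac1m)^{3/2}$ (valid for small $m$) this yields $\lambda^2 m\gtrsim(\hat\eps\mu)^{1/2}\ln^{1/2}\frac1m$. The second change gives $m/\theta\gtrsim\ln\frac1m$, so that $\lambda m\mu/\theta\gtrsim\lambda\mu\ln\frac1m=(\hat\eps\mu\ln\frac1m)^{1/2}$. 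With these adjustments the argument goes through; without them it does not.
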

\begin{proof}
We start fixing $(u,\beta)\in \mathcal A$, $\eps$, $\mu>0$ and $\theta\in(0,1/2]$, and 
use Lemma \ref{lem:dislb} with different choices of parameters in different regimes.
\begin{itemize}
\item[(i)] If $\hat{\eps}\leq\min\{1,\ \mu^3\}$, we choose $ m:=m_0$ and $\lambda:=\hat{\eps}^{1/3}$. 
Then 
we obtain $E(u,\beta)\gtrsim\theta^2\min\{\hat{\eps}^{2/3},\ \mu\}$. 
\item[(ii)] If $1\leq\hat{\eps}\leq\mu^3 $, we choose $ m:=m_0$ {\cm and $\lambda:=1$.} 
Then by \eqref{eq:lbdisloc},
$E(u,\beta)\geq c\theta^2\min\{\mu, \ 1\}$. 
\item[(iii)] It remains to consider $\hat{\eps}>\mu^3$.
We set
\begin{equation}\label{eqdefm2}
m:=\max\{\min\{m_0,\theta \ln \frac2\theta\}, \left(\frac{\mu^{3}}{\hat{\eps}}\right)^{1/4}m_0\}\,.
\end{equation}
Since $\theta\le 1/2$ we have $\ln\frac2\theta\ge 1$, therefore (\ref{lemma1casesdisl}) holds. We distinguish two subcases:
\begin{itemize}
\item[(a)] If $\hat{\eps}\le {\mu}\ln\frac{1}{ m}$, we choose $\lambda:=\left(\frac{\hat{\eps}}{\mu\ln\frac{1}{ m}}\right)^{1/2}$. 
Using $m\ge m^2(\ln \frac1m)^{3/2}$ in the {\cm $\lambda^2 m$} term, 
{\SC
and $m\gtrsim \theta \ln \frac1\theta\gtrsim \theta \ln \frac1m$ in the $\lambda m\mu/\theta$ term, }we conclude
$E(u,\beta)\gtrsim \theta^2\min\{ \mu,1,(\hat{\eps}\mu)^{1/2}\ln^{1/2}\frac1m\}$.
Since $\ln 1/(\theta \ln 2/\theta)\sim \ln 1/\theta$, the proof is concluded in this case.
\item[(b)] If $\hat{\eps}>\mu\ln\frac{1}{ m}$, we choose $\lambda:=1$ and $m:=m_0$. 
Then $E(u,\beta)\gtrsim\theta^2\min\{1,\mu\}$.
\end{itemize}
\end{itemize}
\end{proof}

\section*{Acknowledgements}
We thank {\BZ{Michael Goldman}} and Stefan M\"uller for several interesting discussions.
This work was partially supported by the Deutsche Forschungsgemeinschaft
through the  Sonderforschungsbereich 1060 {\em ``The mathematics of emergent effects''}, 
projects A5 and  A6. 

\newcommand{\etalchar}[1]{$^{#1}$}

\end{document}